\PassOptionsToPackage{table}{xcolor}
\documentclass[final,hidelinks,onefignum,onetabnum]{siamart250211}

\usepackage{lipsum}
\usepackage{amsfonts}
\usepackage{graphicx}
\usepackage{epstopdf}
\usepackage{algorithmic}

\usepackage[T1]{fontenc} 
\ifpdf
  \DeclareGraphicsExtensions{.eps,.pdf,.png,.jpg}
\else
  \DeclareGraphicsExtensions{.eps}
\fi

\newsiamremark{remark}{Remark}
\newsiamremark{hypothesis}{Hypothesis}
\crefname{hypothesis}{Hypothesis}{Hypotheses}
\newsiamthm{claim}{Claim}
\newsiamremark{fact}{Fact}
\crefname{fact}{Fact}{Facts}

\headers{Optimization over the intersection of manifolds}{Y. Yang, B. Gao, and Y.-x. Yuan}

\title{Optimization over the intersection of manifolds\thanks{Submitted to the editors DATE.
\funding{This work was supported by the National Key R\&D Program of China (grant 2023YFA1009300). BG and YY were supported by the National Natural Science Foundation of China (grant No.~12288201).}}}

\author{Yan Yang\thanks{State Key Laboratory of Mathematical Sciences,
 Academy of Mathematics and Systems Science, Chinese Academy of Sciences,
 and University of Chinese Academy of Sciences, China
 (\email{yangyan@amss.ac.cn}).}
\and Bin Gao\thanks{State Key Laboratory of Mathematical Sciences,
 Academy of Mathematics and Systems Science, Chinese Academy of Sciences, China
 (\email{gaobin@lsec.cc.ac.cn}, \email{yyx@lsec.cc.ac.cn}).}
\and Ya-xiang Yuan\footnotemark[3]}

\usepackage{amsopn}
\DeclareMathOperator{\diag}{diag}

\ifpdf
\hypersetup{
  pdftitle={Optimization over the intersection of manifolds},
  pdfauthor={Yan Yang, Bin Gao, and Ya-xiang Yuan}
}
\fi

\usepackage{multirow} 
\usepackage{amsmath,amsxtra,amsfonts,amscd,amssymb,bm,bbm,mathrsfs}
\usepackage{nicefrac}       
\usepackage{extpfeil}
\usepackage{mathtools}
\usepackage{booktabs}
\usepackage{arydshln}
\usepackage[table]{xcolor} 
\usepackage{colortbl}
\usepackage{float}
\usepackage{hyperref}
\hypersetup{colorlinks=true,linkcolor=blue,citecolor=blue}
\usepackage{makecell}
\usepackage{lipsum}
\usepackage{graphicx}
\usepackage{epstopdf}
\usepackage{algorithmic}
\algsetup{indent=0.5em}
\usepackage{tcolorbox}

\newtheorem{assumption}{Assumption}
\crefname{assumption}{Assumption}{Assumptions}
\Crefname{assumption}{Assumption}{Assumptions}

\usepackage{booktabs}
\usepackage{arydshln}
\usepackage{multirow}

\usepackage{tikz}
\usetikzlibrary{positioning, arrows.meta,calc,decorations.text,bending}
\usetikzlibrary{arrows.meta, cd}
\usetikzlibrary{matrix,shadows,positioning,decorations.pathreplacing,decorations.markings}
\usetikzlibrary{matrix,shapes.arrows, shapes.geometric}
\usetikzlibrary{patterns,intersections}
\usepackage{pgfplots}
\usepgfplotslibrary{fillbetween}
\pgfplotsset{compat=1.18}
\usepackage{standalone}
\graphicspath{{./fig/}}

\usepackage{bm}
\usepackage{enumitem}

\newcommand{\innerp}[1]{\langle {#1} \rangle }
\newcommand{\norm}[1]{\left\| {#1} \right\| }

\newcommand{\hkh}[1]{\left\{ {#1} \right\}}
\newcommand{\kh}[1]{\left( {#1} \right)}
\newcommand{\expnumber}[2]{{#1}\mathrm{e}{#2}}

\DeclareMathOperator{\ddiag}{diag}
\DeclareMathOperator{\Diag}{Diag}
\DeclareMathOperator{\dist}{dist}
\DeclareMathOperator{\rank}{rank}

\DeclareMathOperator{\ima}{im}
\DeclareMathOperator{\dime}{dim}

\newcommand{\projection}{\mathcal{P}}
\newcommand{\manifold}{\mathcal{M}}

\newcommand{\aanifold}{\mathcal{A}}
\newcommand{\lanifold}{\mathcal{L}}
\newcommand{\wanifold}{\mathcal{W}}
\newcommand{\yanifold}{\mathcal{Y}}

\newcommand{\canifold}{\mathcal{C}}
\newcommand{\uanifold}{\mathcal{U}}
\newcommand{\vanifold}{\mathcal{V}}

\newcommand{\eanifold}{\mathcal{E}}

\newcommand{\kanifold}{\mathcal{K}}
\newcommand{\xanifold}{\mathcal{X}}

\newcommand{\sanifold}{\mathcal{S}}
\newcommand{\hanifold}{\mathcal{H}}
\newcommand{\hanifoldX}{{\hanifold_X}}
\newcommand{\Gh}{G_h}
\newcommand{\Gf}{G_f}

\newcommand{\matrixhyperboloid}{\mathbb{H}^m_{n}}

\newcommand{\lowrank}{\mathcal{M}_{s}}
\newcommand{\fixedrank}{\mathcal{M}_{r}}
\newcommand{\fixedrankplus}{\mathcal{M}_{r+1}}

\newcommand{\sparseset}{\canifold_s}

\newcommand{\stiefel}{\mathrm{St}}

\newcommand{\trace}{\mathrm{tr}}

\newcommand{\oblique}{\mathrm{Ob}}

\newcommand{\neighbor}{\mathcal{B}}

\newcommand{\diff}{\mathrm{D}}

\newcommand{\tangent}{\mathrm{T}}
\newcommand{\normal}{\mathrm{N}}
\newcommand{\retrac}{\mathrm{R}}
\newcommand{\tanL}{\mathrm{\bf L}}
\newcommand{\tanQ}{\mathrm{\bf Q}}

\newcommand{\Lnormal}{\mathrm{N}}

\newcommand{\Btangent}{\mathrm{T}}
\newcommand{\tangenttwo}{\mathrm{T}^2}

\newcommand{\grad}{\mathrm{grad}}

\newcommand{\tensorspace}{\mathbb{R}^{n_1\times n_2\times\cdots\times n_d}}
\newcommand{\vecr}{\mathbf{r}}

\newcommand{\mtt}{{\mathrm{tt}}}

\newcommand{\tensX}{\mathbf{X}}

\newcommand{\complexity}{O}
\newcommand{\frob}{\mathrm{F}}

\newcommand{\sym}{\mathbb{S}}

\newcommand{\mbR}{\mathbb{R}}

\newcommand{\mbRmn}{\mathbb{R}^{m\times n}}
\newcommand{\mbRm}{\mathbb{R}^{m}}

\definecolor{comblue}{RGB}{2,0,255}

\definecolor{blue}{rgb}{0, 0.451, 1}
\definecolor{myblue}{rgb}{0,0,.5}
\definecolor{bblue}{rgb}{0,0,.85}
\definecolor{mygreen}{rgb}{0, 0.62, 0.38}
\definecolor{red}{rgb}{0.796, 0.059, 0.063}
\definecolor{myred}{rgb}{.5,0,0}
\definecolor{LightGray}{rgb}{0.98, 0.98, 0.98}
\definecolor{Gray}{gray}{0.85}

\begin{document}

\maketitle

\begin{abstract}
Optimization over the intersection of two manifolds arises in a broad range of applications, but is hindered by the coupled geometry of the feasible region. In this paper, we prove that the regularities---clean intersection and intrinsic transversality---are equivalent, which yields a tractable {projection onto the} tangent space of the intersection. Therefore, we propose a geometric method that employs a retraction on only one manifold and updates the iterate along two orthogonal directions. Specifically, the iterates stay on one manifold, and the two directions are responsible for asymptotically approaching the other manifold and decreasing the objective function, respectively. Under intrinsic transversality, we derive the convergence rate for both the feasibility and optimality measures, and show that every accumulation point is first-order stationary. Numerical experiments on problems stemming from sparse and low-rank optimization, including fitting spherical data, approximating hyperbolic embeddings on real data, and computing compressed modes, demonstrate the effectiveness of the proposed method.
\end{abstract}

\begin{keywords}
Manifold intersection, tangent space, orthogonal directions, intrinsic transversality, clean intersection, Riemannian optimization
\end{keywords}

\begin{MSCcodes}
65K05, 90C30, 90C46
\end{MSCcodes}

\section{Introduction}
In this work, we consider the following optimization problem over the intersection of two constraint sets:
\begin{equation}\label{eq:HM_opt}\tag{P}
    \begin{array}{cl}
    \displaystyle\min_{X\in\eanifold} & f(X) \\[1mm]
    \mathrm{s.\,t.} & h(X)=0, \\[1mm]
        & X\in\manifold,
    \end{array}
\end{equation}
where $\eanifold$ denotes a finite-dimensional Euclidean space, accommodating $\mbR^m$, $\mbR^{m\times n}$, or $\mbR^{n_1\times\cdots\times n_d}$, and $\manifold\subseteq\eanifold$ is a smooth submanifold. The objective function $f:\eanifold\to\mbR$ and the constraint-defining map $h:\eanifold\to\mbR^q$ are both smooth. We denote the zero level set of $h$ by 
\begin{equation*}
\hanifold := \hkh{X \in \eanifold \mid h(X) = 0},
\end{equation*}
and thus the feasible region is the intersection $\hanifold\cap\manifold$. Throughout this paper, we impose the following assumption.
\begin{assumption}\label{assu:h}
    There exists an open neighborhood $\kanifold\subseteq\eanifold$ of $\hanifold$ such that the differential $\diff h_X:\eanifold\to\mathbb{R}^q$ has full rank $q$ for all $X\in\kanifold$.
\end{assumption}
Assumption~\ref{assu:h} implies that $\hanifold$ is a smooth manifold in $\eanifold$; see~\cite[Corollary 5.14]{lee2012manifolds}.

In the vanilla scenario $h(\cdot)\equiv0$, i.e., $\hanifold=\eanifold$, \eqref{eq:HM_opt} reduces to an unconstrained optimization problem on the smooth manifold $\manifold$, for which a variety of algorithms---including Riemannian gradient descent and trust-region methods---are well established; see~\cite{absil2008optimization,boumal2023introduction}. However, once the additional constraint $X\in\hanifold$ is non-trivial, dealing with the intersection $\hanifold\cap\manifold$ presents several challenges. First, the intersection $\hanifold\cap\manifold$ does not necessarily constitute a smooth manifold, which impedes the direct application of existing Riemannian optimization algorithms. Second, the geometry of $\hanifold\cap\manifold$ is more intricate than that of $\manifold$ or $\hanifold$ alone---for instance, unclear characterization of the tangent cone to the intersection hinders the construction of effective search directions. Third, projections onto $\hanifold\cap\manifold$ are generally unavailable in closed form, making it difficult to preserve the feasibility of the iterates.

\subsection{Motivation and applications}\label{sec:application}
The formulation~\eqref{eq:HM_opt} encompasses a broad range of problems where a manifold constraint $X\in\manifold$ is coupled with additional structured requirements $h(X)=0$. We outline several representative applications.

\begin{table}[htbp]
    \setlength{\extrarowheight}{1ex}
    \centering
    \caption{Instances of manifold intersections arising in sparse optimization, low-rank matrix and tensor optimization. The intersection geometry of the specific $(\manifold,\hanifold)$ pairs is summarized. Specifically, all the intersections satisfy the intrinsic transversality.}
    \label{tab:tangentsets}
    \vspace{0.1cm}
    \small
        \begin{tabular*}{\textwidth}{@{\extracolsep{\fill}}llll}
            \toprule
             & {Manifold} & {Level set of $h$} & {Intersection geometry} \\
            \midrule
            \multirow{2}{*}{Sparse}
            & $\sparseset$ & $\{X\in\mbR^m\mid \|X\|^2_\frob=1\}$ & \cite{beck2016minimizationC_B} \\
            & $\sparseset$ & $\{X\in\mbR^{n\times p}\mid X^\top X=I_p\}$ & \cite{chenhuang2026sparse_stiefel} \\
            \midrule
            \multirow{7}{*}{Low-rank}
            & $\fixedrank$ & $\{X\in\mbRmn\mid\aanifold(X)=b\}$ & \cite{li2023normalboundedaffine,yang20252ndVariety} \\
            & $\fixedrank$ & $\hanifold$ is orthogonally invariant & \cite{yang2025spacedecouple,yang20252ndVariety} \\
            & $\fixedrank$ & $\hanifold$ is hyperbolic~\eqref{eq:matrixhyperboloid} & Appendix~\ref{app:proj-hyperboloid} \\
            & $\sym_{r}(n)$ & $\{X\in\sym(n)\mid\|X\|^2_\frob=1\}$ & \cite{cason2013iterative,li2020jotaspectral,yang20252ndVariety} \\
            & $\sym^+_r(n)$ & $\{X\in\sym(n)\mid\aanifold(X)=b\}$ & \cite{levin2025effect,yang20252ndVariety} \\
            & $\manifold^{\mtt}_{\vecr}$ & $\{\tensX\in\tensorspace\mid \|\tensX\|_\frob^2=1\}$ & \cite{peng2025normalized,yang20252ndVariety} \\
            \midrule
            General & $\manifold$ & $\hanifold$ satisfies Assumption~\ref{assu:h} & Intrinsic transversality \\
            \bottomrule
        \end{tabular*}
\end{table}

Let $\|\cdot\|_0$ denote the cardinality of an element. The sparsity set $\manifold=\sparseset:=\{X\in\eanifold\mid\|X\|_0=s\}$ combined with normalization or orthogonality constraints appears in several scenarios. When $\eanifold=\mbRm$ is the vector space, Beck and Hallak~\cite{beck2016minimizationC_B} characterized the projection onto $\sparseset\cap\{X\in\mbRm\mid\|X\|_\frob^2=1\}$, with applications in the fields of genetics and finance. Moreover, when $\eanifold=\mbR^{n\times p}$, the intersection of $\sparseset$ and the Stiefel manifold $\stiefel(n,p):=\{X\in\mbR^{n\times p}\mid X^\top X=I_p\}$ underlies sparse principal component analysis~\cite{chen2020ManPG} and the geometry of $\stiefel(n,p)\cap\canifold_s$ has been recently studied in~\cite{chenhuang2026sparse_stiefel}.

Another important class of instances concerns the fixed-rank manifold $\manifold=\fixedrank:=\{X\in\mbRmn\mid \rank(X)=r\}$. Specifically, Cason et al.~\cite{cason2013iterative} derived the tangent cone to $\hanifold\cap\fixedrank$ with $\hanifold$ as the Frobenius sphere, applied to the approximation of graph similarity matrices. Li and Luo~\cite{li2023normalboundedaffine} subsequently obtained the normal cone to $\hanifold\cap\fixedrank$ for $\hanifold$ as an affine manifold. More generally, Yang et al.~\cite{yang2025spacedecouple} characterized the tangent and normal cones to $\hanifold\cap\fixedrank$ when $h$ is \emph{orthogonally invariant}, i.e., $h(X)=h(XQ)$ for all orthogonal $Q$. In addition, the hyperbolic constraint, arising from low-rank compression of hyperbolic embeddings for hierarchical data~\cite{jawanpuria2019lowrankhyperbolic}, was treated in~\cite{yang20252ndVariety}. In the symmetric setting, Li et al.~\cite{li2020jotaspectral} studied the geometry of $\sym_{r}(n):=\{X\in\sym(n)\mid \rank(X)=r\}$ intersected with spectral constraints, where $\sym(n):=\{X\in\mbR^{n\times n}\mid  X^\top=X\}$. Levin et al.~\cite{levin2025effect} analyzed the positive semidefinite counterparts, denoted by $\hanifold\cap\sym^+_{r}(n)$ for some $\hanifold$. The results were extended to tensors: the set $\manifold^{\mtt}_{\vecr}$ of low-rank tensors in tensor-train format coupled with a sphere was investigated in~\cite{peng2025normalized} with applications to quantum physics. A unified analysis on the geometry of $\hanifold\cap\manifold$ covering the above low-rank instances was developed in~\cite{yang20252ndVariety}. We refer the reader to Table~\ref{tab:tangentsets} for a summary.

\vspace{-0.5mm}
\subsection{Related work}\label{sec:relatedwork}
We begin with $\manifold=\eanifold$ and the role of decomposing search directions into orthogonal components for tackling the equality constraint $h(X)=0$.

\textbf{Direction decomposition for equality-constrained problems.} If the region $\manifold$ is the whole Euclidean space $\eanifold$, \eqref{eq:HM_opt} reduces to an optimization problem solely subject to the equality constraint $h(X)=0$. A representative principle is to decompose the update direction into tangent and normal components, responsible for decreasing the objective $f$ and for encouraging the feasibility $X\in\hanifold$, respectively. This idea was first formalized by \cite{rosen1960gradientI,rosen1961gradientII}, and then Frost~\cite{frost1972CGP} proposed the \emph{corrective gradient projection} method, which realized a normal direction as the correction to the tangent one. Additionally, second-order information was exploited in the so-called \emph{null-space methods}~\cite{nocedal1985projectedHessian,yuan2001nullspace}, which enhanced the convergence results. More relevant to our work, the \emph{landing algorithm} was proposed for optimization problems with orthogonality constraints~\cite{ablin2022fastlanding,gao2022landingStiefel,ablin2024infeasible}, getting rid of computationally expensive retractions adopted by Riemannian optimization methods~\cite{absil2008optimization,boumal2023introduction}. Then, Schechtman et al.~\cite{schechtman2023ODCGM} extended the technique to general equality constraints, accommodating stochastic oracles, and Vary et al.~\cite{vary24landinggeneralStiefel} adapted the algorithm for problems over the random generalized Stiefel manifold, which was further extended to distributed optimization~\cite{song2025distributedlanding}. Subsequent work~\cite{si2026unifiedlanding,goyens2026riemannian} incorporated backtracking line search into the landing method. More recently, Xiong et al.~\cite{xiong2026langding2} designed a second-order landing algorithm to achieve locally quadratic convergence. Goyens and Feppon~\cite{goyens2026riemannian} unveiled the relationship between the landing algorithm and several classical optimization methods including  the \emph{sequential quadratic programming method} and the \emph{augmented Lagrangian method}~\cite{nocedal2006numerical}.

\smallskip

When the manifold constraint $X\in\manifold$ is non-trivial, i.e., $\manifold\neq\eanifold$, finding feasible points in $\hanifold\cap\manifold$ is itself a challenging problem, and thus we review the theory of the method of alternating projections (MAP)~\cite{neumann1950functional}. 

\textbf{Intersection condition and alternating projection.} Given two general closed sets $\hanifold$ and $\manifold$ in a Euclidean space, the \emph{feasibility problem} seeks a point $X^*\in\hanifold\cap\manifold$. The method of alternating projections generates iterates by $X_{k+1} \in\projection_{\manifold}(\projection_{\hanifold}(X_k))$, where $\projection$ denotes the projector onto a closed set.

When $\hanifold$ and $\manifold$ are nonconvex, establishing local linear convergence requires appropriate regularity conditions of the intersection $\hanifold\cap\manifold$, often realized as a separation property of the limiting normal cones. The conditions proposed in existing work are summarized in Figure~\ref{fig:intersection_conditions}. Specifically, Lewis and Malick~\cite{lewis2008alternatingmanifolds} first established local linear convergence of the MAP under \emph{transversality} when both $\hanifold$ and $\manifold$ are manifolds, which was generalized to \emph{clean intersection} by Andersson and Carlsson~\cite{andersson2013alternating_cleanintersection}. In parallel, Lewis et al.~\cite{lewis2009localaveraged} introduced \emph{linear regularity} for general closed sets, and Bauschke et al.~\cite{bauschke2013restrictedtheory,bauschke2013restrictedapplication} weakened it to the \emph{restricted regularity}. The regularity conditions were further weakened to \emph{intrinsic transversality}~\cite{drusvyatskiy2015intrinsictransversality} and the \emph{separable condition}~\cite{noll2016separableMAP}, respectively. In addition, the local convergence can be preserved when the exact projections are replaced with appropriate inexact ones~\cite{drusvyatskiy2019inexactapproximate,budzinskiy2025quasioptimal,xiao2025quadraticMAP}. The recent work~\cite{chen2026retractionsalternatingprojections} reveals that the alternating projections can further induce retractions over manifold intersections. To the best of our knowledge, intrinsic transversality appears to be one of the most general conditions to guarantee local linear convergence of the MAP. 

\begin{figure}[htbp]
\centering
\resizebox{\textwidth}{!}{%
\begin{tikzpicture}[
    >=stealth,
    box/.style={rectangle, rounded corners=4pt, draw=black, thick, dashed,
        fill=gray!3, minimum height=0.8cm,
        align=center, font=\small, inner sep=7pt},
    arr/.style={->, thick, draw=black!50, shorten >=3pt, shorten <=3pt}
]
\node[box, minimum width=3.5cm] (trans) at (0,0)
    {Transversality \cite{lewis2008alternatingmanifolds}};
\node[box, minimum width=4.0cm] (clean) at (5.0,0)
    {Clean intersection \cite{andersson2013alternating_cleanintersection}};
\node[box, minimum width=4.8cm] (IT) at (10.6,0)
    {Intrinsic transversality \cite{drusvyatskiy2015intrinsictransversality}};

\node[box, minimum width=3.5cm] (linreg) at (0,-1.5)
    {Linear regularity \cite{lewis2009localaveraged}};
\node[box, minimum width=4.0cm] (restreg) at (5.0,-1.5)
    {Restricted regularity \cite{bauschke2013restrictedtheory}};
\node[box, minimum width=4.8cm] (sep) at (10.6,-1.5)
    {Separable condition \cite{noll2016separableMAP}};

\draw[arr] (trans.east) -- (clean.west);
\draw[arr] (clean.east) -- (IT.west);
\draw[arr] (linreg.east) -- (restreg.west);
\draw[arr] (restreg.east) -- (sep.west);
\draw[arr] (trans.south) -- (linreg.north);
\draw[arr] (clean.south) -- (restreg.north);
\draw[arr] ([yshift=10pt]restreg.east) -- ([yshift=-10pt]IT.west);
\draw[arr] ([yshift=-10pt]clean.east) -- ([yshift=10pt]sep.west);
\end{tikzpicture}%
}
\vspace{-4mm}
\caption{Development of intersection regularity conditions for local linear convergence of the MAP. An arrow from $A$ to $B$ indicates that $A$ implies $B$.}
\label{fig:intersection_conditions}
\end{figure}

\smallskip

To address the optimization problem~\eqref{eq:HM_opt}, existing methods harness the specific structure of the constraint sets; we summarize them as follows.

\textbf{Optimization over the intersection of sets.} For a closed and convex $\manifold\subseteq\eanifold$, a variety of methods have been developed for~\eqref{eq:HM_opt}; see~\cite{rockafellar1976ALMconvex,andreani2008ALM_lowerlevel,xiao2025exactconvex}. When a smooth manifold $\manifold$ is considered (generally nonconvex), a common perspective in recent literature is to cast~\eqref{eq:HM_opt} as a Riemannian optimization problem on $\manifold$ with nonlinear constraints $h(X)=0$. The constraint qualifications and optimality conditions have been investigated in~\cite{yang2014optimalityRie_nonlinear,bergmann2019intrinsicKKT,andreani2024CQ_RieALM,andreani2026globalRALM}. Several Riemannian augmented Lagrangian methods were proposed~\cite{liu2020simple,zhou2023semismoothNewton,jia2023augmented,andreani2024CQ_RieALM,andreani2026globalRALM}, which handle the constraints $h(X) = 0$ by addressing a sequence of surrogate subproblems in the form of $\min_{X\in \manifold}\ f(X)+\langle \lambda, h(X)\rangle + \frac{\mu}{2}\|h(X)\|^2$. Moreover, second-order methods have also been developed for~\eqref{eq:HM_opt}, including Riemannian sequential quadratic optimization methods~\cite{schiela2021Riesqp,obara2022RieSQP} and Riemannian interior point methods \cite{lai2024riemannianint}. However, all the aforementioned Riemannian methods require solving a subproblem at each iteration.

\subsection{Contributions} In this work, we aim to develop a geometric method for~\eqref{eq:HM_opt} by exploiting the structure of the manifold intersection. Under Assumption~\ref{assu:h}, the differential of $h$ has full rank in the neighborhood $\kanifold$, and thus the set 
\[
\hanifoldX:=\{\tilde{X}\in\eanifold\mid h(\tilde{X})=h(X)\},
\]
serving as a perturbation of $\hanifold$, is a smooth manifold for all $X\in\kanifold$. In addition, we assume the intrinsic transversality condition (see Definition~\ref{def:intrinsic_transversality}) as follows.

\begin{assumption}\label{assu:IT}
For all $X\in\kanifold\cap\manifold$, the manifolds $\hanifoldX$ and $\manifold$ are intrinsically transversal at $X\in\hanifoldX\cap\manifold$.
\end{assumption}

To alleviate the difficulty arising from the intricate coupling of $\hanifold$ and $\manifold$ in the constraints of~\eqref{eq:HM_opt}, we preserve $X\in\manifold$ along the iterates via the retraction on $\manifold$, in the spirit of the Riemannian optimization framework. More importantly, we interpret~\eqref{eq:HM_opt} as two sub-tasks: identifying feasible points in $\hanifold\cap\manifold$ and decreasing the objective $f$. This perspective, together with the decomposition principle introduced in section~\ref{sec:relatedwork}, inspires us to seek two orthogonal directions in the tangent space of $\manifold$ that handle the two tasks respectively: a \emph{feasibility direction $\Gh$} that drives the iterates towards $\hanifold\cap\manifold$, and an \emph{optimality direction $\Gf$} that accounts for the descent of $f$. The resulting update rule takes the following form,
\begin{equation*}
    X_{k+1}=\retrac_{X_k}^\manifold(\alpha_k \Gh(X_k)+\beta_k\Gf(X_k)),
\end{equation*}
where $\alpha_k$ and $\beta_k$ are the step sizes.
We then concentrate on constructing the two orthogonal directions tangent to $\manifold$, which resorts to the intersection geometry. Central to the development are two new equivalent characterizations of intrinsic transversality.

For the feasibility direction, we project the Gauss--Newton direction---an approximation of $(\projection_{\hanifold}(X)-X)$ that pushes $X$ toward $\hanifold$---onto the tangent space of $\manifold$. We prove in Theorem~\ref{thm:PB-implies-IT} that intrinsic transversality is equivalent to a \emph{projection-based transversality condition}; this equivalence ensures that the projected Gauss--Newton direction retains a sufficient tangential component, thereby providing an effective improvement on the feasibility. For the optimality direction, we project the negative gradient $-\nabla f(X)$ onto the tangent cone $\tangent_{\hanifoldX\cap\manifold}(X)$. To compute this projection, we establish in Theorem~\ref{thm:IT_implies_clean} that, given two manifolds generally, intrinsic transversality is equivalent to clean intersection, answering an open question posed in~\cite[\S8]{ioffe2017transversality}. Therefore, $\hanifoldX\cap\manifold$ is a manifold with the following intersection rule,
\begin{equation}\label{eq:tangent_decom}
\tangent_{\hanifoldX\cap\manifold}(X) = \tangent_{\hanifoldX}(X)\cap\tangent_{\manifold}(X),
\end{equation}
Consequently, the above identity reveals that $\tangent_{\hanifoldX\cap\manifold}(X)$ is a linear subspace, and the projection onto it is characterized explicitly in Proposition~\ref{pro:proj_SX}.

Combining the feasibility and optimality directions in the tangent space, we propose the \emph{Geometric method via Orthogonal Tangent Directions} (GOTD), with the iterates staying on $\manifold$, asymptotically approaching $\hanifold$, and decreasing $f$. The implementation is summarized in Algorithm~\ref{alg:GOTD} and is illustrated in Figure~\ref{fig:gotd_illustration}. Moreover, we prove an $\complexity(1/\sqrt{K})$ convergence rate for both the feasibility and the optimality measures in Theorem~\ref{thm:lyap_complexity}, and show that every accumulation point is first-order stationary for~\eqref{eq:HM_opt} in Corollary~\ref{cor:stationary}, under some constraint qualifications. 

Numerical experiments on fitting spherical data, approximating hyperbolic embeddings on real data, and computing compressed modes demonstrate the effectiveness and efficiency of GOTD, attributable to the exploitation of the intersection geometry.

\definecolor{cyanbox}{RGB}{0,160,180}
\definecolor{redbox}{RGB}{200,60,60}
\colorlet{feasicolor}{cyanbox!80!black}
\colorlet{opticolor}{redbox!80!black}
\colorlet{fig@Gn}{gray}
\colorlet{fig@Gt}{gray}
\colorlet{fig@GnLabel}{black}
\colorlet{fig@GtLabel}{black}
\colorlet{fig@curve}{feasicolor}
\colorlet{fig@star}{cyanbox!60!black}
\colorlet{fig@curveLabel}{feasicolor}

\begin{figure}[htbp]
\centering
\includegraphics[width=0.75\textwidth]{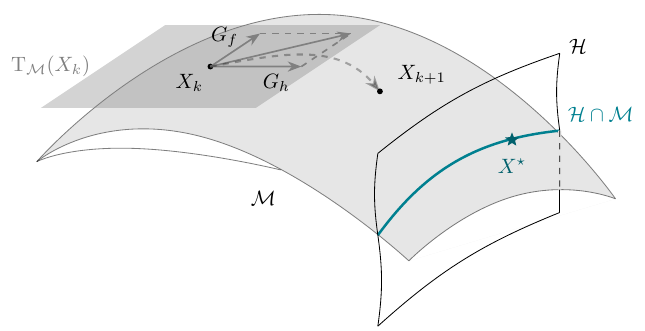}
\caption{Illustration of GOTD. At $X_k\in\manifold$, the update direction combines two orthogonal components in the tangent space $\tangent_\manifold(X_k)$: the feasibility direction $\Gh$ that drives $X_k$ towards $\hanifold$ and the optimality direction $\Gf$ that decreases $f$. The $X_{k+1}$ is obtained by a retraction on $\manifold$.}
\label{fig:gotd_illustration}
\vspace{-2mm}
\end{figure}

\subsection{Organization}
Section~\ref{sec:prelim} presents the notation and preliminaries. We propose the main framework in section~\ref{sec:framework}, and construct in sections~\ref{subsec:feasibility}--\ref{subsec:clean} the feasibility and optimality directions via two equivalent characterizations of intrinsic transversality. Section~\ref{sec:convergence} provides the convergence analysis. Section~\ref{sec:experiments} validates the method on sparse and low-rank optimization problems, and the conclusion is drawn in section~\ref{sec:conclusion}. 


\section{Notation and preliminaries}\label{sec:prelim}
We outline the notation adopted in this paper, and then review some background in variational analysis and Riemannian optimization; see \cite{absil2008optimization,lewis2008alternatingmanifolds,rockafellar2009variationalanalysis,lee2012manifolds,boumal2023introduction} for more details and references.

\subsection{Notation}
The diagonal matrix with entries $x$ is written as $\Diag(x)$, and $\ddiag(X)$ extracts the diagonal of $X$ as a vector. On a Euclidean space, we adopt the Frobenius inner product $\innerp{X_1,X_2}:=\trace(X_1^\top X_2)$, with the induced norm $\|X\|:=\sqrt{\innerp{X,X}}$. For a set $\xanifold\subseteq\eanifold$, the distance from $Y$ to $\xanifold$ is $\dist(Y,\xanifold):=\inf_{X\in\xanifold}\|X-Y\|$, and $\projection_\xanifold$ stands for the projection onto $\xanifold$. When $\xanifold$ is additionally a smooth manifold, $\tangent_\xanifold(X)$ refers to its tangent space at $X$, and any smooth map $F:\xanifold_1\to\xanifold_2$ between manifolds admits the differential $\diff F_X:\tangent_{\xanifold_1}(X)\to\tangent_{\xanifold_2}(F(X))$ at $X$. Given a matrix $X$ of rank $s$, we write its singular value decomposition by $X=U\varSigma V^\top$ with $U\in\stiefel(m,s)$, $\varSigma\in\mbR^{s\times s}$, and $V\in\stiefel(n,s)$; then the Moore--Penrose inverse is $X^\dagger=V\varSigma^{-1}U^\top$. Given a map $F:\eanifold\to\eanifold^\prime$ between two Euclidean spaces, we use $\ima(F)\subseteq\eanifold^\prime$ to denote the image and $F^*:\eanifold^\prime\to\eanifold$ to denote the adjoint operator. The operator $\odot$ denotes the Hadamard (entry-wise) product.

\subsection{Preliminaries}
Let $\xanifold$ be a \emph{locally closed} set in a finite-dimensional Euclidean space $\eanifold$, i.e., every point in $\xanifold$ admits a closed neighborhood $\neighbor\subseteq\eanifold$ such that $\neighbor\cap\xanifold$ is closed in $\eanifold$. The \emph{Bouligand tangent cone} to $\xanifold$ at a point $X\in\xanifold$ is
\begin{equation}\label{eq:tangentcone}
    \begin{aligned}
        \tangent_\xanifold(X) := \left\{ \eta\in\eanifold\mid \text{there exists}\ t_i\to0,\,\text{such that}\,\dist(X+t_i\eta,\xanifold)=o(t_i)\right\}.
    \end{aligned}
\end{equation}
Taking the polar operation on $\tangent_\xanifold(X)$ yields the \emph{Fr\'echet normal cone}, 
\begin{equation*}
\normal_\xanifold(X):=\kh{\tangent_\xanifold(X)}^\circ = \hkh{ Y\in\eanifold \mid \langle Y, \eta\rangle \le 0, \ \text{for all}\ \eta \in \Btangent_\xanifold(X)}.
\end{equation*}
When $\xanifold$ is a smooth manifold, the tangent and normal cones coincide with the tangent and normal spaces, respectively. We then consider the intersection of two sets $\xanifold$ and $\yanifold$. Given $X\in \xanifold\cap \yanifold$, it generally holds that
\begin{equation}\label{eq:cone_oneside}
    \Btangent_{\xanifold\cap \yanifold}(X) \subseteq  \Btangent_{\xanifold}(X) \cap \Btangent_{\yanifold}(X)\quad \text{and} \quad\ 
    \normal_{\xanifold\cap \yanifold}(X) \supseteq \normal_{\xanifold}(X) + \normal_{\yanifold}(X).
\end{equation}
If $\xanifold$ and $\yanifold$ are smooth manifolds and they intersect \emph{transversally} at $X\in\xanifold\cap\yanifold$, i.e., 
\begin{equation}\label{eq:transversally}
    \tangent_{\xanifold}(X) + \tangent_{\yanifold}(X) = \eanifold,\ \ \text{or equivalently},\ \ \normal_{\xanifold}(X)\cap \normal_{\yanifold}(X) = \{0\},
\end{equation}
then $\xanifold\cap \yanifold$ is also a smooth manifold around $X$ with
\begin{equation}\label{eq:cone_tranverse}
    \Btangent_{\xanifold\cap \yanifold}(X) = \Btangent_{\xanifold}(X) \cap \Btangent_{\yanifold}(X)\quad \text{and} \quad\ 
    \normal_{\xanifold\cap \yanifold}(X) = \normal_{\xanifold}(X) + \normal_{\yanifold}(X).
\end{equation}
Note that the definition of transversality~\eqref{eq:transversally} depends on the ambient space $\eanifold$, and thus a generalization called \emph{clean intersection}~\cite{Hormander1985III} was considered in~\cite{andersson2013alternating_cleanintersection} for the convergence analysis of alternating projections.
\begin{definition}[Clean intersection]\label{def:clean_intersection}
    Two manifolds $\xanifold, \yanifold \subseteq \eanifold$ \emph{intersect cleanly} at $Z\in\xanifold\cap\yanifold$ if $\xanifold\cap\yanifold$ is a smooth manifold in a neighborhood $\neighbor$ of $Z$ and it holds that $\tangent_{\xanifold\cap\yanifold}(X) = \tangent_{\xanifold}(X)\cap\tangent_{\yanifold}(X)$ for all $X\in\xanifold\cap\yanifold\cap\neighbor$.
\end{definition}
Moreover, Drusvyatskiy et al.~\cite{drusvyatskiy2015intrinsictransversality} introduced the following notion, which characterizes the intersection via pairs of nearby points.

\begin{definition}[Intrinsic transversality]\label{def:intrinsic_transversality}
    Two locally closed sets $\xanifold, \yanifold \subseteq \eanifold$ are \emph{intrinsically transversal} at $Z \in \xanifold \cap \yanifold$ if there exists a constant $\kappa \in (0,1]$ such that, for all $X \in \xanifold \setminus \yanifold$ and $Y \in \yanifold \setminus \xanifold$ in a neighborhood of $Z$,
    \begin{equation}\label{eq:IT_kappa}
        \max\hkh{ \dist(u, \normal_\yanifold(Y)), \dist(u, -\normal_\xanifold(X)) } \ge \kappa,
    \end{equation}
    where $u = (X-Y)/\|X-Y\|$. The $\kappa$ is called the constant of intrinsic transversality.
\end{definition}
Intrinsic transversality reveals that the difference direction $u$ cannot lie close to $\Lnormal_\yanifold(Y)$ and $-\Lnormal_\xanifold(X)$ simultaneously, reflecting a separation property of the two cones. When $\xanifold$ and $\yanifold$ are manifolds, clean intersection implies intrinsic transversality~\cite{drusvyatskiy2015intrinsictransversality}.

For problems constrained on a smooth manifold $\xanifold$, the framework of Riemannian optimization is developed, leveraging the Riemannian geometry of manifolds; see~\cite{absil2008optimization,boumal2023introduction} for an overview. To guide the movement from the current point along a tangent vector, a geometric tool \emph{retraction} is introduced. Specifically, a smooth map $\retrac^{\xanifold}: \tangent\xanifold \rightarrow \xanifold$, defined on the tangent bundle $\tangent \xanifold:=\bigcup_{X\in\xanifold}\tangent_{\xanifold}(X)$, is called a {retraction} on the manifold $\xanifold$ if for all $X\in\xanifold,\xi\in\tangent_{\xanifold}(X)$, the curve $\gamma(t):=\retrac^{\xanifold}_X(t\xi)$ satisfies $\gamma(0)=X$ and $\gamma^\prime(0)=\xi$, where $\retrac^{\xanifold}_X$ denotes the restriction of $\retrac^{\xanifold}$ on $\tangent_{\xanifold}(X)$.

\section{A geometric framework via orthogonal tangent directions}\label{sec:framework}
We now present a geometric framework for problem~\eqref{eq:HM_opt}. To preserve the structure of $\manifold$, we adopt the following retraction-based update rule:
\begin{equation}\label{eq:general_updateX}
    X_{k+1}=\retrac_{X_k}^\manifold(\alpha_k \Gh(X_k)+\beta_k\Gf(X_k)),
\end{equation}
where $\Gh(X_k),\Gf(X_k)\in\tangent_{\manifold}(X_k)$. Interpreting~\eqref{eq:HM_opt} as two sub-tasks---decreasing $\dist(\cdot,\hanifold)$ for feasibility and decreasing $f(\cdot)$ for optimality---we then design $\Gh$ and $\Gf$ to handle the tasks respectively, inspired by the decomposition principle introduced in section~\ref{sec:relatedwork}. Under Assumptions~\ref{assu:h}--\ref{assu:IT} and given an iterate $X\in\kanifold\cap\manifold$, we recall the level set $\hanifoldX=\{\tilde{X}\in\eanifold\mid h(\tilde{X})=h(X)\}$ and denote the tangent cone by $S(X):=\tangent_{\hanifoldX\cap\manifold}(X)$. The following observation motivates the construction.

\begin{lemma}\label{lem:orth_GtGn}
    Given two manifolds $\xanifold$ and $\yanifold$ in $\eanifold$ with a point $X\in\xanifold\cap\yanifold$. Then for all $d\in\normal_{\yanifold}(X)$ and $\eta\in\eanifold$, it holds that $\left\langle\projection_{\tangent_{\xanifold\vphantom{\cap\yanifold}}(X)}(d),\,\projection_{\tangent_{\xanifold\cap\yanifold}(X)}(\eta)\right\rangle = 0$.
\end{lemma}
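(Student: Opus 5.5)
The plan is to reduce everything to two facts: orthogonal projection onto a linear subspace is self-adjoint and idempotent, and the tangent cone of the intersection sits inside both tangent spaces. No regularity of the intersection (neither transversality nor clean intersection) should be needed. In particular, $\tangent_{\xanifold\cap\yanifold}(X)$ need only be a closed cone, and possibly a nonconvex one. If the projection onto it is set-valued, I will interpret $\projection_{\tangent_{\xanifold\cap\yanifold}(X)}(\eta)$ as any element $p$ of the projection set. The argument only uses $p\in\tangent_{\xanifold\cap\yanifold}(X)$, so it covers every selection.

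\textbf{Step 1 (inclusion into both tangent spaces).} Fix such a $p$. By the general inclusion~\eqref{eq:cone_oneside}, $\tangent_{\xanifold\cap\yanifold}(X)\subseteq\tangent_{\xanifold}(X)\cap\tangent_{\yanifold}(X)$. Hence $p\in\tangent_{\xanifold}(X)$ and $p\in\tangent_{\yanifold}(X)$. Since $\xanifold$ and $\yanifold$ are smooth manifolds, both of these are linear subspaces of $\eanifold$.

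\textbf{Step 2 (remove the projection onto $\tangent_\xanifold(X)$).} Because $\projection_{\tangent_{\xanifold}(X)}$ is the orthogonal projector onto a linear subspace, it is self-adjoint. It also fixes $p$, since $p\in\tangent_\xanifold(X)$. Therefore
\[
\left\langle \projection_{\tangent_{\xanifold}(X)}(d),\,p\right\rangle=\left\langle d,\,\projection_{\tangent_{\xanifold}(X)}(p)\right\rangle=\langle d,p\rangle .
\]

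\textbf{Step 3 (orthogonality of normal and tangent spaces).} Since $\yanifold$ is a manifold, $\normal_{\yanifold}(X)$ is the orthogonal complement of $\tangent_{\yanifold}(X)$. With $d\in\normal_\yanifold(X)$ and $p\in\tangent_\yanifold(X)$, this gives $\langle d,p\rangle=0$, which concludes the proof.

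The only potential subtlety is Step~1: one must make sure that the projection onto a possibly nonconvex cone still returns a point of that cone. This holds because the Bouligand tangent cone is closed, so the projection set is nonempty and contained in the cone. With that checked, the remaining steps are immediate.
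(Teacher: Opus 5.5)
Your proof is correct and follows essentially the same route as the paper. Both use the inclusion $\tangent_{\xanifold\cap\yanifold}(X)\subseteq\tangent_{\xanifold}(X)\cap\tangent_{\yanifold}(X)$ to drop the projection onto $\tangent_\xanifold(X)$, and then the orthogonality of $\normal_\yanifold(X)$ and $\tangent_\yanifold(X)$. Your remark that any selection from a possibly set-valued projection onto the closed, possibly nonconvex cone still works is a valid refinement that the paper leaves implicit.
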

\begin{proof}
    Let $\bar \eta = \projection_{\tangent_{\xanifold\cap\yanifold}(X)}(\eta)$. By~\eqref{eq:cone_oneside}, we have $\bar\eta\in\tangent_{\xanifold\cap\yanifold}(X)\subseteq\tangent_{\xanifold}(X)$, and thus $\innerp{\projection_{\tangent_{\xanifold}(X)}(d),\bar\eta} = \innerp{d,\bar\eta}$. Similarly, the inclusion $\bar\eta\in\tangent_{\yanifold}(X)$ holds, which together with $d\in\normal_{\yanifold}(X)$ yields $\innerp{d,\bar\eta}=0$.
\end{proof}

Applying Lemma~\ref{lem:orth_GtGn} with $(\xanifold,\yanifold)=(\manifold,\hanifoldX)$, the orthogonality $\innerp{\Gh,\Gf}=0$ is guaranteed whenever the two directions take the following form,
\begin{equation}\label{eq:constructionofGnGt}
    \Gh(X)=\projection_{\tangent_{\manifold}(X)}(d(X))\ \ \text{and}\ \ \Gf(X)=\projection_{S(X)}(\eta(X)),
\end{equation}
where $d(X)\in\normal_{\hanifoldX}(X)$ and $\eta(X)\in\eanifold$. Therefore, we shift our focus toward the design of $d(X)$ and $\eta(X)$ in sections~\ref{subsec:feasibility} and~\ref{subsec:clean}, respectively, which mainly rely on the geometry of the manifold intersection. The overall development is illustrated in Figure~\ref{fig:flowchart}, and the geometric framework is realized in Algorithm~\ref{alg:GOTD}.

\begin{figure}[htbp]
\centering
\begin{tikzpicture}[>=Stealth, scale=0.85, every node/.style={scale=0.85},
    bx/.style={rectangle, rounded corners=4pt, draw=black, thick, dashed,
        fill=gray!5, minimum height=0.9cm, align=center, inner sep=6pt},
    cbx/.style={rectangle, rounded corners=4pt, draw=feasicolor, thick, dashed,
        fill=feasicolor!5, minimum height=0.9cm, align=center, inner sep=6pt},
    rbx/.style={rectangle, rounded corners=4pt, draw=opticolor, thick, dashed,
        fill=opticolor!5, minimum height=0.9cm, align=center, inner sep=6pt},
    arr/.style={->, thick, line width=1pt}
]
    \def\vsep{1.}
    \node[bx, minimum width=2.5cm] (P) at (0,0) {
        Problem~\eqref{eq:HM_opt}
    };

    \node[cbx, minimum width=2.8cm] (F) at (3.3, \vsep) {
        {Feasibility}\\[1mm]
        {\small $\dist(\cdot,\hanifold)\downarrow$}
    };

    \node[rbx, minimum width=2.8cm] (O) at (3.3,-\vsep) {
        {Optimality}\\[1mm]
        {\small $f(\cdot)\downarrow$}
    };

    \node[cbx, minimum width=2.8cm] (Gn) at (8.8, \vsep) {
        {\small $\Gh(X)\in\tangent_{\manifold}(X)$}
    };
    \node[rbx, minimum width=2.8cm] (Gt) at (8.8,-\vsep) {
        {\small $\Gf(X)\in\tangent_{\manifold}(X)$}
    };

    \draw[<->, thin, black!60, shorten >=4pt, shorten <=4pt] (Gn.south) -- node[left, font=\normalsize] {orthogonal} (Gt.north);

    \node[bx, minimum width=2.8cm] (U) at (12.5,0) {
        {GOTD}\\[1mm]
        {\small Algorithm~\ref{alg:GOTD}}
    };

    \draw[arr, feasicolor!80!black] ($(P.east)+(0,0.3)$) -- (F.west);
    \draw[arr, opticolor!80!black] ($(P.east)+(0,-0.3)$) -- (O.west);

    \draw[arr, feasicolor!80!black] (F.east) -- node[above, font=\normalsize, black] {Theorem~\ref{thm:PB-implies-IT}} (Gn.west);
    \draw[arr, opticolor!80!black] (O.east) -- node[below, font=\normalsize, black] {Theorem~\ref{thm:IT_implies_clean}} (Gt.west);

    \draw[arr, feasicolor!80!black] (Gn.east) -- ($(U.west)+(0,0.3)$);
    \draw[arr, opticolor!80!black] (Gt.east) -- ($(U.west)+(0,-0.3)$);

\end{tikzpicture}
\caption{Development of the GOTD algorithm: two equivalent characterizations of intrinsic transversality guide the construction of the orthogonal tangent directions $\Gh$ and $\Gf$.}
\label{fig:flowchart}
\end{figure}

We outline some lemmas in preparation for sections~\ref{subsec:feasibility}--\ref{subsec:clean}; the reader is invited to refer back as appropriate.

\subsection{Auxiliary results on differential manifolds}\label{app:manifold-aux} The following lemma reveals that the difference between two nearby points is nearly tangent.

\begin{lemma}\label{lem:chord-tangent}
Let $\xanifold$ be a $C^2$ embedded submanifold of $\eanifold$ and let $Z\in\xanifold$. Then there exist a neighborhood $\neighbor$ of $Z$ and a constant $C_\xanifold>0$ such that for all $X_1, X_2\in\xanifold\cap\neighbor$,
\[
    \dist\big(X_1-X_2,\,\tangent_{\xanifold}(X_2)\big)
    \le C_{\xanifold}\|X_1-X_2\|^2.
\]
\end{lemma}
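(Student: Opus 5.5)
We plan to work in a local graph (or parametrization) chart of $\xanifold$ around $Z$ and use a second-order Taylor expansion. Since $\xanifold$ is a $C^2$ embedded submanifold of dimension $p$, there exist an open neighborhood $\uanifold\subseteq\mbR^p$ of $0$ and a $C^2$ immersion $\varphi:\uanifold\to\eanifold$ that is a homeomorphism onto $\xanifold\cap\neighbor_0$ for some open $\neighbor_0\ni Z$, with $\varphi(0)=Z$. The tangent space at $\varphi(y)$ is then $\tangent_{\xanifold}(\varphi(y))=\ima(\diff\varphi_y)$. We shrink $\uanifold$ to a closed ball $\bar B(0,\rho)$ on which $\|\diff^2\varphi\|\le M$ and $\diff\varphi_y$ is uniformly injective, i.e., $\|\diff\varphi_y[v]\|\ge \sigma\|v\|$ for some $\sigma>0$. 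Both bounds follow from continuity of $\diff\varphi$ and $\diff^2\varphi$ together with compactness.

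Given $X_i=\varphi(y_i)$ with $y_i$ in the ball, which is convex, Taylor's theorem with integral remainder gives
\[
X_1-X_2=\diff\varphi_{y_2}[y_1-y_2]+r,\qquad \|r\|\le \tfrac{M}{2}\|y_1-y_2\|^2 .
\]
Since $\diff\varphi_{y_2}[y_1-y_2]\in\tangent_{\xanifold}(X_2)$, we obtain $\dist(X_1-X_2,\tangent_{\xanifold}(X_2))\le \tfrac M2\|y_1-y_2\|^2$.

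It remains to compare $\|y_1-y_2\|$ with $\|X_1-X_2\|$, and this bi-Lipschitz estimate is the main step. The same expansion gives
\[
\|X_1-X_2\|\ge \sigma\|y_1-y_2\|-\tfrac M2\|y_1-y_2\|^2\ge \tfrac{\sigma}{2}\|y_1-y_2\|
\]
whenever $\|y_1-y_2\|\le \sigma/M$. This holds if we take the radius $\rho\le \sigma/(2M)$. Consequently $\|y_1-y_2\|\le \tfrac{2}{\sigma}\|X_1-X_2\|$, and the claim follows with $C_\xanifold=2M/\sigma^2$.

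For the neighborhood we take $\neighbor:=\varphi(B(0,\rho))$, which is open in $\xanifold$ because $\varphi$ is a homeomorphism onto its image. Since $\xanifold$ carries the subspace topology, $\neighbor=\xanifold\cap\mathcal{O}$ for some open set $\mathcal{O}\subseteq\eanifold$, and embeddedness is exactly what makes this possible. Every point of $\xanifold\cap\mathcal{O}$ then has a chart preimage in the ball, so the estimates apply to all $X_1,X_2\in\xanifold\cap\mathcal{O}$. One could equally use the local graph representation $X=Z+v+g(v)$ with $v\in\tangent_{\xanifold}(Z)$ and $g$ being $C^2$, where the bi-Lipschitz step is immediate because $\|v_1-v_2\|\le\|X_1-X_2\|$ (by orthogonality of $\tangent_\xanifold(Z)$ and $\normal_\xanifold(Z)$). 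This variant is probably the cleanest choice for writing the proof.
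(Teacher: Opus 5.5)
Your proposal is correct and takes the same route as the paper: a local $C^2$ parametrization, a first-order Taylor expansion at $X_2$ whose linear term lies in $\tangent_{\xanifold}(X_2)$, and the comparison $\|y_1-y_2\|=\complexity(\|X_1-X_2\|)$. The paper only asserts that last bi-Lipschitz step, while you justify it with uniform bounds on $\diff^2\varphi$ and on the injectivity of $\diff\varphi$ over a small closed ball (or, in your graph variant, with the orthogonal splitting of Lemma~\ref{lem:local-graph}).
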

\begin{proof}
By the $C^2$ regularity of $\xanifold$, there exists a neighborhood $\neighbor$ of $Z$ and a $C^2$ parametrization $\psi$ of $\xanifold\cap\neighbor$ with $\psi(0)=Z$ and $\tangent_\xanifold(Z)=\ima(\diff\psi(0))$. Write $X_i=\psi(s_i)$ for $i=1,2$. A Taylor expansion gives $X_1-X_2=\diff\psi(s_2)[s_1-s_2]+\complexity(\|s_1-s_2\|^2)$. Since $\diff\psi(s_2)[s_1-s_2]\in\tangent_\xanifold(X_2)$ and $\|s_1-s_2\|=\complexity(\|X_1-X_2\|)$, the estimate follows.
\end{proof}
The next lemma collects properties of the projection operator near a $C^2$ submanifold.

\begin{lemma}\label{lem:prox-regular}
Let $\xanifold$ be a $C^2$ embedded submanifold of $\eanifold$ and let $Z\in\xanifold$. Then there exist a neighborhood $\neighbor$ of $Z$, constants $\delta>0$ and $L>0$ such that the projection $\projection_\xanifold$ is single-valued and $L$-Lipschitz on $\{\tilde{X}\in\eanifold\mid\dist(\tilde{X},\xanifold\cap\neighbor)<\delta\}$. Moreover, for every $X\in\xanifold\cap\neighbor$, unit vector $v\in\normal_\xanifold(X)$, and $t\in(0,\delta)$, one has $\projection_\xanifold(X+tv)=X$.
\end{lemma}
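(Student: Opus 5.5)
The plan is to reduce everything to a local graph or tubular-neighborhood description of $\xanifold$ near $Z$ and then apply the inverse function theorem to the normal-bundle map. Let $d=\dim\xanifold$. Near $Z$, write $\xanifold$ as the image of a $C^2$ parametrization $\psi$ (as in the proof of Lemma~\ref{lem:chord-tangent}). Then choose a $C^1$ orthonormal frame $\nu_1(s),\dots,\nu_{n-d}(s)$ of $\normal_\xanifold(\psi(s))$; this is possible because the tangent projector $\projection_{\tangent_\xanifold(\psi(s))}$ depends $C^1$ on $s$, since $\diff\psi$ is $C^1$. Define
\[
\Phi(s,w):=\psi(s)+\textstyle\sum_{j}w_j\nu_j(s).
\]
This map is $C^1$, and at $(0,0)$ its differential is $(\sigma,w)\mapsto \diff\psi(0)[\sigma]+\sum_j w_j\nu_j(0)$, which is invertible. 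The inverse function theorem then makes $\Phi$ a $C^1$ diffeomorphism from a neighborhood $U\times W$ of $(0,0)$ onto an open set $\mathcal{O}\ni Z$.

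\textbf{Step 1: single-valuedness.} I will choose $\neighbor$ and $\delta$ small enough that two things hold. First, the $\delta$-tube around $\xanifold\cap\neighbor$ lies in $\mathcal{O}$. Second, for any point $\tilde X$ in the tube, every nearest point of $\xanifold$ to $\tilde X$ is close to $Z$ and lies in $\psi(U)$. The second condition uses local closedness together with a triangle inequality: a nearest point is within $2\delta$ of $\xanifold\cap\neighbor$. Any nearest point $P=\psi(s)$ satisfies the first-order condition $\tilde X-P\in\normal_\xanifold(P)$, so $\tilde X=\Phi(s,w)$ with $\|w\|=\|\tilde X-P\|<\delta$. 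Since $\Phi$ is injective on $U\times W$, the pair $(s,w)$, and hence $P$, is unique. This gives $\projection_\xanifold(\tilde X)=\psi(\pi_1\Phi^{-1}(\tilde X))$, where $\pi_1$ is the projection onto the first component.

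\textbf{Step 2: Lipschitz continuity.} The formula above shows that $\projection_\xanifold$ is $C^1$ on the tube. Shrinking $\neighbor$ and $\delta$ so that the closure of the tube is compact and contained in $\mathcal{O}$ bounds the derivative. For the global Lipschitz estimate on a possibly nonconvex tube, I would either restrict to a ball, which is convex, or use the standard prox-regular estimate $\langle \tilde X_1-\tilde X_2,P_1-P_2\rangle\ge(1-c\delta)\|P_1-P_2\|^2$. That estimate follows from the normal-direction bound implied by Lemma~\ref{lem:chord-tangent}:
\[
|\langle \tilde X_i-P_i,P_j-P_i\rangle|\le \delta C_\xanifold\|P_j-P_i\|^2 .
\]
Summing the two inequalities and applying Cauchy--Schwarz gives $L=1/(1-2C_\xanifold\delta)$ for $\delta<1/(2C_\xanifold)$. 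I prefer this route because it avoids any convexity issue.

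\textbf{Step 3: normal segments.} Take $X\in\xanifold\cap\neighbor$, a unit vector $v\in\normal_\xanifold(X)$, and $t<\delta$. The point $X+tv=\Phi(s_X,tv')$ lies in the tube, so by Step 1 its unique projection is the $\psi$-component of $\Phi^{-1}(X+tv)$, which is $X$. Alternatively, for any $Y\in\xanifold$ near $X$,
\[
\|X+tv-Y\|^2=t^2+\|X-Y\|^2-2t\langle v,Y-X\rangle\ge t^2+(1-2tC_\xanifold)\|X-Y\|^2,
\]
again by Lemma~\ref{lem:chord-tangent}. This shows that $X$ is the strict nearest point when $\delta<1/(2C_\xanifold)$.

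The main obstacle is the localization bookkeeping. I have to make sure that the nearest points of $\xanifold$ to points in the tube actually land in the chart neighborhood where Lemma~\ref{lem:chord-tangent} and the injectivity of $\Phi$ apply. I will handle this by first fixing a chart radius $r$, and then taking $\neighbor$ of radius $r/4$ and $\delta<r/4$.
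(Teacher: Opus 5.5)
Your proof is correct, but it follows a different route from the paper. The paper handles the lemma in one line by invoking Federer's theory of sets of positive reach \cite[Theorem~4.8]{federer1959curvature}. For a set of positive reach $q$, that theorem gives uniqueness of nearest points, a Lipschitz bound of the form $q/(q-\delta)$ on the $\delta$-tube, and the identity $\projection(X+tv)=X$ along normal segments with $t<q$. The citation leaves two steps implicit, and you make both explicit:
\begin{itemize}
\item that a $C^2$ embedded submanifold has positive local reach near $Z$;
\item the localization to $\xanifold\cap\neighbor$, needed because $\xanifold$ is only locally closed.
\end{itemize}
You derive everything from the paper's own Lemma~\ref{lem:chord-tangent} plus local closedness. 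In fact, the quadratic chord estimate with constant $C_\xanifold$ is essentially Federer's characterization of (local) reach at least $1/(2C_\xanifold)$. Accordingly, your constant $L=1/(1-2C_\xanifold\delta)$ coincides with $q/(q-\delta)$ for $q=1/(2C_\xanifold)$.

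Two remarks on economy.
\begin{itemize}
\item Your inequality $\innerp{\tilde X_1-\tilde X_2,P_1-P_2}\ge(1-2C_\xanifold\delta)\norm{P_1-P_2}^2$, applied with $\tilde X_1=\tilde X_2$, already forces two nearest points to coincide. Your second argument in Step~3 already gives the normal-segment property. So the normal-bundle map $\Phi$ and the inverse function theorem are not needed for the lemma as stated. What they buy is an extra conclusion the lemma does not claim: $\projection_\xanifold$ is $C^1$ on the tube.
\item Drop the ``restrict to a ball'' option. The lemma asks for a Lipschitz bound on the whole tube $\{\tilde X\mid\dist(\tilde X,\xanifold\cap\neighbor)<\delta\}$, which is not convex. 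The monotonicity route is the one that delivers that bound.
\end{itemize}
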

\begin{proof}
This is a direct consequence of~\cite[Theorem~4.8]{federer1959curvature}.
\end{proof}
We then recall that a submanifold can be written as a graph over its tangent space.

\begin{lemma}[Local graph representation]\label{lem:local-graph}
Let $\xanifold$ be a $C^k$ ($k\ge 1$) embedded submanifold of $\eanifold$ and let $Z\in\xanifold$. Let $\tanL:=\tangent_\xanifold(Z)$. Then there exist a neighborhood $U_\tanL\subset\tanL$ of $0$ and a $C^k$ map $\mu:U_\tanL\to\tanL^\perp$ with $\mu(0)=0$ and $\diff\mu(0)=0$ such that $\xanifold\cap\neighbor = \{Z + u + \mu(u) \mid u\in U_\tanL\}$ for some neighborhood $\neighbor$ of $Z$.
\end{lemma}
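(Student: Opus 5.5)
The plan is to derive the graph representation from the implicit function theorem applied to the orthogonal projection onto $\tanL$. Since $\xanifold$ is a $C^k$ embedded submanifold, I take a local $C^k$ parametrization $\psi:W\to\xanifold$ with $W\subseteq\mbR^p$ open, $p=\dime\tanL$, $\psi(0)=Z$, $\diff\psi(0)$ injective and $\ima(\diff\psi(0))=\tanL$, and with $\psi$ a homeomorphism onto $\xanifold\cap\neighbor_0$ for some open neighborhood $\neighbor_0$ of $Z$.

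The first step is to consider the $C^k$ map $\phi:W\to\tanL$, $\phi(s):=\projection_\tanL(\psi(s)-Z)$. Its differential at $0$ is $\projection_\tanL\circ\diff\psi(0)$. This map is a linear isomorphism $\mbR^p\to\tanL$, because $\diff\psi(0)$ maps bijectively onto $\tanL$ and $\projection_\tanL$ is the identity there.

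The second step uses the inverse function theorem. It gives open sets $W'\subseteq W$ containing $0$ and $U_\tanL\subseteq\tanL$ containing $0$ such that $\phi:W'\to U_\tanL$ is a $C^k$ diffeomorphism. I then define
\[
\mu(u):=\projection_{\tanL^\perp}\big(\psi(\phi^{-1}(u))-Z\big),
\]
which is $C^k$. We have $\mu(0)=0$ since $\psi(0)=Z$. Moreover,
\[
\diff\mu(0)=\projection_{\tanL^\perp}\circ\diff\psi(0)\circ(\diff\phi(0))^{-1}=0,
\]
because $\ima(\diff\psi(0))=\tanL$. Writing $\psi(\phi^{-1}(u))-Z$ as the sum of its $\tanL$ and $\tanL^\perp$ components gives $\psi(\phi^{-1}(u))=Z+u+\mu(u)$. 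Hence $\psi(W')=\{Z+u+\mu(u)\mid u\in U_\tanL\}$.

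The remaining point, which is the only real subtlety, is to find a neighborhood $\neighbor$ with $\xanifold\cap\neighbor$ equal to exactly this set. Since $\psi$ is a homeomorphism onto $\xanifold\cap\neighbor_0$ in the subspace topology, the set $\psi(W')$ is relatively open in $\xanifold$. So there is an open $\neighbor\subseteq\neighbor_0$ with $\xanifold\cap\neighbor=\psi(W')$; this uses the embeddedness assumption, which rules out other sheets of $\xanifold$ accumulating at $Z$. This completes the representation. One could shrink $U_\tanL$ to a ball if convenient, provided $\neighbor$ is shrunk accordingly by the same relative-openness argument.
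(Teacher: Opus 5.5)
Your proof is correct and is essentially the paper's argument: the paper applies the inverse function theorem directly to the restriction $\pi|_{\xanifold}$ of $\pi(X)=\projection_{\tanL}(X-Z)$ and sets $\mu(u)=\phi(u)-Z-u$ for its local inverse $\phi$, whereas you apply it to $\pi\circ\psi$ in a chart, so your $\psi\circ\phi^{-1}$ is exactly the paper's local inverse. Your relative-openness argument, which produces a neighborhood $\neighbor$ with $\xanifold\cap\neighbor$ equal to the graph, fills in a step the paper leaves implicit, and you handle it correctly.
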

\begin{proof}
Define $\pi:\eanifold\to\tanL$ by $\pi(X) := \projection_\tanL(X - Z)$. For all $v\in\tangent_\xanifold(Z) = \tanL$, we have $\diff(\pi|_\xanifold)(Z)[v] = \projection_\tanL(v) = v$, i.e., $\diff(\pi|_\xanifold)(Z) = \mathrm{id}_\tanL$. By the inverse function theorem, $\pi|_\xanifold$ is a local $C^k$ diffeomorphism near $Z$ with $\pi(Z)=0$. Let $\phi:U_\tanL\to\xanifold$ be its local inverse, such that $\pi(\phi(u))=u$ for all $u\in U_\tanL$. Define $\mu(u):=\phi(u)-Z-u$. Then $\mu(u)\in\tanL^\perp$ since $\projection_\tanL(\mu(u)) = \pi(\phi(u)) - u = 0$, and $\mu$ is $C^k$ with $\mu(0) = \phi(0)-Z = 0$. For $\diff\mu(0)$: since $\phi$ maps into $\xanifold$ and $\phi(0)=Z$, the image of $\diff\phi(0)$ lies in $\tangent_\xanifold(Z)=\tanL$; differentiating $\pi\circ\phi=\mathrm{id}$ gives $\projection_\tanL\circ\diff\phi(0)=\mathrm{id}_\tanL$, and thus $\diff\phi(0)=\mathrm{id}_\tanL$. It follows that $\diff\mu(0)[v]=\diff\phi(0)[v]-v=0$ for all $v\in\tanL$.
\end{proof}

\subsection{Regularities preserved under diffeomorphisms}\label{app:proof-clean}
We show in the following lemma that clean intersection is preserved under local diffeomorphisms.

\begin{lemma}\label{lem:clean_diffeo}
Let $\xanifold,\yanifold\subseteq\eanifold$ be $C^1$ embedded submanifolds that intersect cleanly at $Z\in\xanifold\cap\yanifold$, and let $\Psi:\eanifold\to\eanifold'$ be a $C^1$ diffeomorphism defined near $Z$. Then $\Psi(\xanifold)$ and $\Psi(\yanifold)$ intersect cleanly at $\Psi(Z)$ in $\eanifold'$.
\end{lemma}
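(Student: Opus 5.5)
Since $\Psi$ is a $C^1$ diffeomorphism on a neighborhood $\mathcal{O}$ of $Z$, it is a bijection of $\mathcal{O}$ onto the open set $\Psi(\mathcal{O})$. The plan has two steps: first show that intersections and smooth-manifold structure are transported by $\Psi$, then show that tangent spaces are transported through the linear isomorphism $\diff\Psi_X$. Replacing $\xanifold$ and $\yanifold$ by $\xanifold\cap\mathcal{O}$ and $\yanifold\cap\mathcal{O}$, and shrinking $\mathcal{O}$ if needed, we may assume $\mathcal{O}$ is contained in the neighborhood $\neighbor$ from Definition~\ref{def:clean_intersection}. On this set $\xanifold\cap\yanifold$ is a $C^1$ embedded submanifold, and $\tangent_{\xanifold\cap\yanifold}(X)=\tangent_\xanifold(X)\cap\tangent_\yanifold(X)$ for every $X\in\xanifold\cap\yanifold\cap\mathcal{O}$.

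\textbf{Step 1 (set-level transport).} Injectivity of $\Psi$ on $\mathcal{O}$ gives $\Psi(\xanifold\cap\yanifold)=\Psi(\xanifold)\cap\Psi(\yanifold)$ within $\Psi(\mathcal{O})$. A $C^1$ diffeomorphism between open sets maps $C^1$ embedded submanifolds to $C^1$ embedded submanifolds: compose local slice charts (or local parametrizations, as in the proof of Lemma~\ref{lem:chord-tangent}) with $\Psi$. Applying this to $\xanifold$, $\yanifold$ and $\xanifold\cap\yanifold$ shows that $\Psi(\xanifold)$, $\Psi(\yanifold)$ and $\Psi(\xanifold)\cap\Psi(\yanifold)$ are embedded submanifolds in the neighborhood $\Psi(\mathcal{O})$ of $\Psi(Z)$.

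\textbf{Step 2 (tangent transport).} For any $C^1$ embedded submanifold $\mathcal{S}\subseteq\mathcal{O}$ and $X\in\mathcal{S}$, we claim $\tangent_{\Psi(\mathcal{S})}(\Psi(X))=\diff\Psi_X[\tangent_{\mathcal{S}}(X)]$. To see this, take a $C^1$ curve $\gamma$ in $\mathcal{S}$ with $\gamma(0)=X$ and $\gamma'(0)=\xi$. Then $\Psi\circ\gamma$ is a curve in $\Psi(\mathcal{S})$ with velocity $\diff\Psi_X[\xi]$, which gives the inclusion ``$\supseteq$''. Equality follows either by dimension count, since $\diff\Psi_X$ is injective, or by applying the same argument to $\Psi^{-1}$. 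Now fix $W=\Psi(X)\in\Psi(\xanifold)\cap\Psi(\yanifold)\cap\Psi(\mathcal{O})$. Because $\diff\Psi_X$ is bijective, it commutes with intersections of subspaces, and therefore
\[
\tangent_{\Psi(\xanifold)\cap\Psi(\yanifold)}(W)=\diff\Psi_X[\tangent_\xanifold(X)\cap\tangent_\yanifold(X)]=\diff\Psi_X[\tangent_\xanifold(X)]\cap\diff\Psi_X[\tangent_\yanifold(X)]=\tangent_{\Psi(\xanifold)}(W)\cap\tangent_{\Psi(\yanifold)}(W).
\]
This is exactly the clean intersection property at every point of the neighborhood $\Psi(\mathcal{O})$ of $\Psi(Z)$.

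The main point needing care is the localization bookkeeping. The sets $\xanifold$ and $\yanifold$ must be restricted to $\mathcal{O}$ so that the injectivity argument in Step 1 is valid, and the resulting neighborhood of $\Psi(Z)$ must lie inside both $\Psi(\mathcal{O})$ and the image of the clean-intersection neighborhood. The remaining steps are routine.
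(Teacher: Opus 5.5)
Your proposal is correct and follows the same route as the paper. You transport the intersection set using injectivity of $\Psi$, transport tangent spaces via the chain rule, and use that the isomorphism $\diff\Psi_X$ commutes with intersections of subspaces at each nearby intersection point. Your version additionally spells out the localization and the curve-plus-dimension argument for $\tangent_{\Psi(\mathcal{S})}(\Psi(X))=\diff\Psi_X[\tangent_{\mathcal{S}}(X)]$, which the paper leaves implicit.
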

\begin{proof}
Since $\Psi$ is a diffeomorphism, $\Psi(\xanifold\cap\yanifold) = \Psi(\xanifold)\cap\Psi(\yanifold)$ locally, which is a $C^1$ submanifold. Moreover, the chain rule gives $\tangent_{\Psi(\xanifold)}(\Psi(Z)) = \diff\Psi(Z)[\tangent_\xanifold(Z)]$ and similarly for $\yanifold$ and $\xanifold\cap\yanifold$. Since $\diff\Psi(Z)$ is an isomorphism, we have
\begin{align*}
\diff\Psi(Z)[\tangent_\xanifold(Z)] \cap \diff\Psi(Z)[\tangent_\yanifold(Z)] = \diff\Psi(Z)[\tangent_\xanifold(Z)\cap\tangent_\yanifold(Z)] = \diff\Psi(Z)[\tangent_{\xanifold\cap\yanifold}(Z)].
\end{align*}
The same argument holds for points around $Z$.
\end{proof}

The next lemma, noted as an exercise in~\cite[\S3]{drusvyatskiy2015intrinsictransversality}, reveals that intrinsic transversality is preserved under local diffeomorphisms; we restate it below for completeness.

\begin{lemma}\label{lem:IT_diffeo}
Let $\xanifold,\yanifold\subseteq\eanifold$ be $C^1$ embedded submanifolds that are intrinsically transversal at $Z\in\xanifold\cap\yanifold$, and let $\Psi:\eanifold\to\eanifold'$ be a $C^1$ local diffeomorphism defined near $Z$. Then $\Psi(\xanifold)$ and $\Psi(\yanifold)$ are intrinsically transversal at $\Psi(Z)$ in $\eanifold'$.
\end{lemma}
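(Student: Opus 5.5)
Write $Z':=\Psi(Z)$, $\xanifold':=\Psi(\xanifold)$ and $\yanifold':=\Psi(\yanifold)$. The plan is to take points $X'\in\xanifold'\setminus\yanifold'$ and $Y'\in\yanifold'\setminus\xanifold'$ near $Z'$ and pull them back to $X=\Psi^{-1}(X')\in\xanifold\setminus\yanifold$ and $Y=\Psi^{-1}(Y')\in\yanifold\setminus\xanifold$, which lie near $Z$. We then argue by contradiction. Suppose no constant $\kappa'>0$ works at $Z'$. Then there are sequences $X_i'\to Z'$, $Y_i'\to Z'$ with unit vectors $u_i'=(X_i'-Y_i')/\|X_i'-Y_i'\|$, normals $n_i'\in\normal_{\yanifold'}(Y_i')$ and $m_i'\in\normal_{\xanifold'}(X_i')$, such that $\|u_i'-n_i'\|\to0$ and $\|u_i'+m_i'\|\to0$. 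We will transport these sequences back and contradict~\eqref{eq:IT_kappa} at $Z$.

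\textbf{Step 1 (linearize the chord).} Let $A:=\diff\Psi(Z)$, which is invertible, and write $\Phi:=\Psi^{-1}$. Since $\Phi$ is $C^1$, $X_i-Y_i=\diff\Phi(Z')[X_i'-Y_i']+o(\|X_i'-Y_i'\|)$. Normalizing, $u_i$ is within $o(1)$ of $A^{-1}u_i'/\|A^{-1}u_i'\|$. This first-order step needs only $C^1$ regularity, because both points converge to $Z'$ and the derivative is continuous, so the mean-value form applies.

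\textbf{Step 2 (transport normals).} For a manifold, the tangent spaces transform as $\tangent_{\xanifold'}(X')=\diff\Psi(X)[\tangent_\xanifold(X)]$. Hence $\normal_{\xanifold}(X)=\diff\Psi(X)^*[\normal_{\xanifold'}(X')]$, and likewise for $\yanifold$. Set $n_i:=\diff\Psi(Y_i)^*n_i'$ and $m_i:=\diff\Psi(X_i)^*m_i'$. By continuity, $\diff\Psi(X_i)$ and $\diff\Psi(Y_i)$ both converge to $A$, so $n_i=A^*n_i'+o(1)$ and $m_i=A^*m_i'+o(1)$. Therefore $A^*u_i'\approx n_i\in\normal_\yanifold(Y_i)$ and $-A^*u_i'\approx m_i\in\normal_\xanifold(X_i)$.

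\textbf{Step 3 (mismatch of $A^{-1}$ versus $A^*$).} This is the main obstacle. Step 1 says that $u_i$ is parallel to $A^{-1}u_i'$, while Step 2 says that $A^*u_i'$ is nearly normal to both sets, and these differ unless $A$ is orthogonal. The first fix to try is to reduce to that case. Factor $A=QP$ with $Q$ orthogonal and $P$ symmetric positive definite, and precompose with the linear map $P^{-1}$. It is not clear that intrinsic transversality is invariant under a general linear isomorphism $P$, so this reduction does not immediately work. The more robust route is to use the equivalence of intrinsic transversality with the quantitative condition $\dist(0,\normal_\yanifold(Y)\cap\mathbb{B}+\ldots)$. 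In other words, restate~\eqref{eq:IT_kappa} via the characterization in~\cite{drusvyatskiy2015intrinsictransversality} that intrinsic transversality holds iff for pairs $(X,Y)$ near $Z$ there is no sequence of normal pairs $n\in\normal_\yanifold(Y)$, $m\in\normal_\xanifold(X)$ with $n+m\to0$, $\|n\|=1$, and $n$ asymptotically aligned with $X-Y$ in the sense $\innerp{n,X-Y}\ge c\|n\|\|X-Y\|$.

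That alignment formulation is invariant under invertible linear maps up to constants. The reason is that $\innerp{A^*u',A^{-1}u'}=\|u'\|^2$, so $\innerp{A^*u_i',u_i}\ge\|A^{-1}\|^{-1}\|A\|^{-1}>0$ in the limit. Combined with $n_i+m_i\to0$ and $\|n_i\|$ bounded below, this yields a contradiction with the hypothesis at $Z$. Thus we will first verify this (or a similar) angle-type reformulation of~\eqref{eq:IT_kappa}, then run Steps 1–2 inside it, with all $o(1)$ terms controlled by the continuity of $\diff\Psi$ and $\diff\Psi^{-1}$.
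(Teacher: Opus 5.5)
The paper does not prove this lemma; it quotes it as an exercise from~\cite{drusvyatskiy2015intrinsictransversality}. Your argument therefore has to stand on its own, and as written it does not close. Steps~1--2 are correct, but they only show that $\Psi$ acts like its linearization $A=\diff\Psi(Z)$ up to $o(1)$ errors. What remains is exactly invariance of intrinsic transversality under the linear isomorphism $A$, which is the case you concede you cannot handle in Step~3. Your way out is an ``angle-type reformulation'' in which the alignment $\innerp{n,u}\to1$ is weakened to $\innerp{n,u}\ge c$. You assert it and postpone its verification, but the direction you need is not a consequence of Definition~\ref{def:intrinsic_transversality}. That direction says intrinsic transversality at $Z$ excludes sequences with $n_i+m_i\to0$, $\|n_i\|=1$, and $\innerp{n_i,u_i}\ge c$. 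Condition~\eqref{eq:IT_kappa} only forbids pairs whose chord direction $u$ is itself close to both normal spaces. Weak alignment lets $u$ carry a large tangential component, and that is precisely the $A^{-1}$ versus $A^*$ discrepancy. So the proposal trades the lemma for an unproved statement of the same strength; only the easy converse (weak-alignment condition $\Rightarrow$ intrinsic transversality) is immediate.

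The missing idea is that, for manifolds, a chord with a sizeable component orthogonal to $\tangent_{\xanifold}(Z)+\tangent_{\yanifold}(Z)$ can be turned into a genuinely violating pair by sliding its endpoints along the manifolds. Put $\mathcal{T}:=\tangent_{\xanifold}(Z)+\tangent_{\yanifold}(Z)$ and $W:=\mathcal{T}^\perp=\normal_{\xanifold}(Z)\cap\normal_{\yanifold}(Z)$.

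First, $n_i+m_i\to0$ with $\|n_i\|=1$ forces $\dist(n_i,W)\to0$. This follows from continuity of normal spaces and linear regularity of a pair of subspaces. Weak alignment then gives $\|\projection_W(X_i-Y_i)\|\ge\frac{c}{2}\tau_i$, where $\tau_i:=\|X_i-Y_i\|$.

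Next, split $\projection_{\mathcal{T}}(X_i-Y_i)=a_i+b_i$ with $a_i\in\tangent_{\xanifold}(Z)$, $b_i\in\tangent_{\yanifold}(Z)$, and $\|a_i\|+\|b_i\|\le C\tau_i$. Using the graph maps of Lemma~\ref{lem:local-graph}, whose derivatives tend to $0$ at $Z$, pick $\tilde X_i\in\xanifold$ and $\tilde Y_i\in\yanifold$ with $\tilde X_i=X_i-a_i+o(\tau_i)$ and $\tilde Y_i=Y_i+b_i+o(\tau_i)$.

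The new chord is $\projection_W(X_i-Y_i)+o(\tau_i)$, so its direction tends to $W$, hence to both $\normal_{\xanifold}(\tilde X_i)$ and $\normal_{\yanifold}(\tilde Y_i)$. Moreover $\tilde X_i\notin\yanifold$: otherwise $\tilde X_i-\tilde Y_i$ would be a chord of $\yanifold$, hence nearly in $\tangent_{\yanifold}(Z)\perp W$. Likewise $\tilde Y_i\notin\xanifold$. This contradicts~\eqref{eq:IT_kappa} at $Z$.

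You can also bypass normals entirely. For $C^1$ manifolds, intrinsic transversality fails at $Z$ if and only if there are $X_i\in\xanifold$, $Y_i\in\yanifold$, $X_i\neq Y_i$, converging to $Z$, whose chord directions stay a fixed distance from $\mathcal{T}$. This condition is manifestly preserved by $\Psi$: $A$ maps $\mathcal{T}$ onto the corresponding sum of tangent spaces at $\Psi(Z)$, and $\Psi(X_i)-\Psi(Y_i)=A(X_i-Y_i)+o(\tau_i)$.
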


\section{Projection-based transversality and feasibility direction}\label{subsec:feasibility}
By~\eqref{eq:constructionofGnGt}, designing the feasibility direction $\Gh$ reduces to specifying $d(X)$. Since the goal of $\Gh$ is to drive the iterate $X\in\manifold$ towards $\hanifold$, a reasonable candidate is along the projection residual $r(X):=\projection_{\hanifold}(X)-X$, which points from $X$ towards $\hanifold$. However, recalling that $d(X)$ is required to lie in $\normal_{\hanifoldX}(X)$ to guarantee the orthogonality of $\Gh$ and $\Gf$, we instead approximate $r(X)$ by the \emph{Gauss--Newton direction}, which solves the linearized least-squares problem $\min_{d\in\normal_{\hanifoldX}(X)}\|h(X)+\diff h_X(d)\|^2$:
\begin{equation}\label{eq:GN_direction}
    d(X) = -\diff h_X^*\big(\diff h_X \diff h_X^*\big)^{-1} h(X).
\end{equation}
Note that the direction $d(X)$ lies in $\normal_{\hanifoldX}(X)=\ima(\diff h_X^*)$, and it approximates the projection residual $r(X)$ to first order.
\begin{lemma}\label{lem:GN_approx}
Under Assumption~\ref{assu:h}, let $Z\in\hanifold$. There exist a neighborhood $\neighbor$ of $Z$ and $C_d>0$ such that $\|d(X)-r(X)\| \le C_d\|r(X)\|^2$ for all $X\in\neighbor$.
\end{lemma}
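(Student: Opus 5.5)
Write $P:=\projection_\hanifold(X)$, so that $r(X)=P-X$. By Lemma~\ref{lem:prox-regular} applied to $\hanifold$ (which is $C^2$ under Assumption~\ref{assu:h}), there is a neighborhood $\neighbor$ of $Z$ on which $P$ is single-valued and Lipschitz. Also $P$ stays near $Z$, and $r(X)\in\normal_\hanifold(P)=\ima(\diff h_P^*)$. The plan is to compare $d(X)$ with $r(X)$ through the pseudo-inverse map
\[
A(Y):=\diff h_Y^*(\diff h_Y\diff h_Y^*)^{-1}.
\]
Shrinking $\neighbor$ inside $\kanifold$ and using compactness and smoothness, $A$ is well defined, bounded by some constant $M_A$, and Lipschitz with constant $L_A$ on a closed ball around $Z$ containing both $X$ and $P$.

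\textbf{Step 1 (exact identity at $P$).} Since $r(X)\in\ima(\diff h_P^*)$ and $A(P)\diff h_P$ is the orthogonal projector onto $\ima(\diff h_P^*)$, we have
\[
r(X)=A(P)\diff h_P[r(X)].
\]

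\textbf{Step 2 (Taylor expansion of $h$ around $P$).} Using $h(P)=0$ and the boundedness of the second derivative of $h$ near $Z$,
\[
h(X)=h(P)-\diff h_P[r(X)]+\complexity(\|r(X)\|^2),
\]
hence $\diff h_P[r(X)]=-h(X)+e$ with $\|e\|\le C\|r(X)\|^2$.

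\textbf{Step 3 (combine).} From Steps 1 and 2, $r(X)=-A(P)h(X)+A(P)e$, while by definition $d(X)=-A(X)h(X)$. Therefore
\[
d(X)-r(X)=-(A(X)-A(P))h(X)-A(P)e .
\]
The second term is bounded by $M_A C\|r(X)\|^2$. For the first term, Lipschitzness gives $\|A(X)-A(P)\|\le L_A\|r(X)\|$, and $\|h(X)\|=\|h(X)-h(P)\|\le L_h\|r(X)\|$. So the first term is also $\complexity(\|r(X)\|^2)$, and the two bounds together give the constant $C_d$.

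\textbf{Main obstacle.} The only delicate point is the bookkeeping of neighborhoods. The neighborhood $\neighbor$ must be chosen small enough that: $P$ is single-valued; $X$, $P$, and the segment between them stay in a compact set inside $\kanifold$ where $\diff h$ has full rank (so $A$ is uniformly bounded and Lipschitz); and the Taylor remainder constant is uniform. Because $\|P-X\|\le\|X-Z\|$ (as $Z\in\hanifold$), we get $\|P-Z\|\le2\|X-Z\|$. Hence taking $\neighbor$ to be a sufficiently small ball around $Z$ suffices.
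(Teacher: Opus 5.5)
Your proof is correct and follows essentially the same route as the paper. Both arguments use three facts: $r(X)\in\ima(\diff h_P^*)$ at the projection $P$; a Taylor expansion of $h$ about $P$ with $h(P)=0$; and the observation that replacing $\diff h_X$ by $\diff h_P$ costs $\complexity(\|r(X)\|)$, which multiplies $\|h(X)\|=\complexity(\|r(X)\|)$. Your pseudo-inverse map $A(Y)$, the projector identity $A(P)\diff h_P=\projection_{\ima(\diff h_P^*)}$, and the bound $\|P-Z\|\le 2\|X-Z\|$ make explicit the uniformity of constants that the paper's $\complexity(\cdot)$ notation leaves implicit.
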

\begin{proof}
Let $X_p = \projection_{\hanifold}(X)$ such that $r(X) = X_p - X$ and $h(X_p)=0$. The projection gives $X - X_p \in \normal_{\hanifold}(X_p) = \ima(\diff h_{X_p}^*)$, and thus $-r(X) = \diff h_{X_p}^*(v)$ for some $v\in\mbR^q$. A Taylor expansion yields $h(X) = h(X_p) + \diff h_{X_p}(X - X_p) + \complexity(\|r(X)\|^2) = \diff h_{X_p}\diff h_{X_p}^*(v) + \complexity(\|r(X)\|^2)$. Since $\diff h_X = \diff h_{X_p} + \complexity(\|r(X)\|)$, we obtain $d(X) = -\diff h_X^*(\diff h_X\diff h_X^*)^{-1}h(X) = -\diff h_{X_p}^*(v) + \complexity(\|r(X)\|^2)  = r(X) + \complexity(\|r(X)\|^2)$.
\end{proof}

Following~\eqref{eq:constructionofGnGt}, the feasibility direction $\Gh(X)$ is given as follows,
\begin{equation}\label{eq:Gn}
    \Gh(X) = \projection_{\tangent_{\manifold}(X)} (d(X)),
\end{equation}
the projected Gauss--Newton direction. An ensuing question is whether the projected component of $d(X)$ still pulls the iterates towards $\hanifold$. To answer this, we investigate the intersection geometry of $\hanifold$ and $\manifold$, taking into account the intrinsic transversality.

Intuitively, to guarantee that the projected $\Gh(X)$ inherits the ability of $d(X)$ to decrease $\dist(\cdot,\hanifold)$, the original $d(X)$ should retain sufficient norm along the tangent space of $\manifold$, motivating us to propose the following condition.

\begin{definition}[Projection-based transversality]
\label{def:PS-uniform}
Two manifolds $\xanifold,\yanifold\subseteq\eanifold$ satisfy \emph{projection-based transversality} at $Z\in\xanifold\cap\yanifold$ if
there exist a constant $\kappa'\in(0,1]$ and a neighborhood $\neighbor$ of $Z$ such that, denoting $u=(X-Y)/\|X-Y\|$,
\begin{enumerate}[leftmargin=2em]
\item[\emph{(i)}] for all $Y\in (\yanifold\cap \neighbor)\setminus \xanifold$ with $X=\projection_{\xanifold}(Y)$, one has $\dist\big(u, \normal_{\yanifold}(Y)\big) \ge \kappa'$;
\item[\emph{(ii)}] for all $X\in (\xanifold\cap \neighbor)\setminus \yanifold$ with $Y=\projection_{\yanifold}(X)$, one has $\dist\big(u,-\normal_{\xanifold}(X)\big) \ge \kappa'$.
\end{enumerate}
\end{definition}

Since the above condition constrains only the \emph{projection pairs} $(X,\projection_{\yanifold}(X))$ and $(\projection_{\xanifold}(Y),Y)$, it is a priori weaker than intrinsic transversality, which constrains \emph{all pairs} around the common point (see Definition~\ref{def:intrinsic_transversality}). Nevertheless, we then show in Theorem~\ref{thm:PB-implies-IT} that, for $C^2$ submanifolds, the two conditions are equivalent, before which the following lemma is presented as a preliminary.

\begin{lemma}\label{lem:normalization}
For all nonzero $a,b\in\eanifold$, $\big\|a/\|a\|-b/\|b\|\big\|\le 2\|a-b\|/\|a\|$.
\end{lemma}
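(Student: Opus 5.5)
The estimate follows from one application of the triangle inequality, inserting the intermediate vector $a/\|b\|$. We write
\[
\frac{a}{\|a\|}-\frac{b}{\|b\|} = \Big(\frac{a}{\|a\|}-\frac{a}{\|b\|}\Big) + \Big(\frac{a}{\|b\|}-\frac{b}{\|b\|}\Big),
\]
and bound the two terms separately. The first term has norm $\|a\|\,\big|1/\|a\|-1/\|b\|\big| = \big|\|b\|-\|a\|\big|/\|b\|$, and by the reverse triangle inequality this is at most $\|a-b\|/\|b\|$. The second term has norm exactly $\|a-b\|/\|b\|$. Summing gives
\[
\big\|a/\|a\|-b/\|b\|\big\|\le 2\|a-b\|/\|b\|.
\]

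This bound has $\|b\|$ in the denominator, whereas the statement asks for $\|a\|$. Since the claimed inequality is not symmetric in its appearance but the left-hand side is symmetric in $a$ and $b$, we simply redo the split with the roles of $a$ and $b$ interchanged. That is, we insert $b/\|a\|$ instead:
\[
\frac{a}{\|a\|}-\frac{b}{\|b\|}=\frac{a-b}{\|a\|}+b\Big(\frac{1}{\|a\|}-\frac{1}{\|b\|}\Big).
\]
The first piece has norm $\|a-b\|/\|a\|$. The second piece has norm $\big|\|b\|-\|a\|\big|/\|a\|\le\|a-b\|/\|a\|$, again by the reverse triangle inequality. Adding the two yields the stated bound $2\|a-b\|/\|a\|$.

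There is no real obstacle in this argument. The only point requiring care is choosing the intermediate vector $b/\|a\|$ rather than $a/\|b\|$, so that the normalization factor appearing in the final bound is $\|a\|$ as claimed.
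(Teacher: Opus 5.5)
Your proof is correct and matches the paper's: the decomposition $a/\|a\|-b/\|b\|=(a-b)/\|a\|+b\big(1/\|a\|-1/\|b\|\big)$ is exactly the identity the paper uses, and the reverse triangle inequality bounds the second term. The first split via $a/\|b\|$ is an unnecessary detour, though it already gives the claim after swapping $a$ and $b$, since the left-hand side is symmetric.
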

\begin{proof}
It is seen from the equality $a/\|a\|-b/\|b\|=(a-b)/\|a\|+b(1/\|a\|-1/\|b\|)$. 
\end{proof}

\begin{theorem}\label{thm:PB-implies-IT}
Let $\xanifold$ and $\yanifold$ be $C^2$ embedded submanifolds of $\eanifold$, and let $Z\in\xanifold\cap\yanifold$.
Then $\xanifold$ and $\yanifold$ are intrinsically transversal at $Z$ if and only if they satisfy the projection-based transversality at $Z$.
\end{theorem}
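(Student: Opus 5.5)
The forward implication is immediate: intrinsic transversality constrains all pairs $(X,Y)\in(\xanifold\setminus\yanifold)\times(\yanifold\setminus\xanifold)$ near $Z$, and projection pairs are special cases. If $Y\in\yanifold\setminus\xanifold$ is close to $Z$, then $X=\projection_\xanifold(Y)$ is close to $Z$ by Lemma~\ref{lem:prox-regular}. One still has to check that $X\notin\yanifold$ (and symmetrically in (ii)). If instead $X\in\xanifold\cap\yanifold$, then $u=(X-Y)/\|X-Y\|$ would be a unit normal to $\xanifold$ at $X$, and it would also be almost tangent to $\yanifold$ at $Y$ by Lemma~\ref{lem:chord-tangent}. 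In that case we can fall back on a limiting argument, or use the remaining slack: $\dist(u,\normal_\yanifold(Y))\approx 1$ already holds there, so condition (i) holds with room to spare. In both cases we may take $\kappa'=\kappa$, possibly after shrinking the neighborhood.

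For the converse, argue by contradiction. Suppose intrinsic transversality fails. Then there are sequences $X_k\in\xanifold\setminus\yanifold$ and $Y_k\in\yanifold\setminus\xanifold$ with $X_k,Y_k\to Z$ and unit vectors $u_k=(X_k-Y_k)/\|X_k-Y_k\|$ satisfying
\[
\dist(u_k,\normal_\yanifold(Y_k))\to0 \quad\text{and}\quad \dist(u_k,-\normal_\xanifold(X_k))\to0 .
\]
Pass to a subsequence with $u_k\to\bar u$; by continuity of the normal spaces, $\bar u\in\normal_\yanifold(Z)\cap\normal_\xanifold(Z)$. The aim is to produce a projection pair that violates (i) or (ii). Set $Y'_k=\projection_\yanifold(X_k)$, which is well defined by Lemma~\ref{lem:prox-regular}, and compare the direction of $X_k-Y'_k$ with $u_k$.

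The key estimate is as follows. Decompose $X_k-Y_k=(X_k-Y'_k)+(Y'_k-Y_k)$. The second term is nearly tangent to $\yanifold$ at $Y_k$ by Lemma~\ref{lem:chord-tangent}, while $u_k$ is nearly normal to $\yanifold$ at $Y_k$. Hence the tangential part of $X_k-Y_k$ at $Y_k$ is $o(\|X_k-Y_k\|)$. By the $C^2$ (prox-regular) structure, a point whose displacement from $Y_k$ is almost normal has its projection close to $Y_k$: more precisely, $\|Y'_k-Y_k\|\le \|\projection_{\tangent_\yanifold(Y_k)}(X_k-Y_k)\|+O(\|X_k-Y_k\|^2)=o(\|X_k-Y_k\|)$. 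Lemma~\ref{lem:normalization} then gives $u'_k:=(X_k-Y'_k)/\|X_k-Y'_k\|=u_k+o(1)$. Since $u_k$ is close to $-\normal_\xanifold(X_k)$, we get $\dist(u'_k,-\normal_\xanifold(X_k))\to0$, which contradicts (ii) for the pair $(X_k,\projection_\yanifold(X_k))$. We must also rule out $\|X_k-Y'_k\|$ being much smaller than $\|X_k-Y_k\|$. This follows because $u_k$ is nearly normal at $Y_k$: then $\dist(X_k,\yanifold)\ge(1-o(1))\|X_k-Y_k\|$, using the quadratic bound of Lemma~\ref{lem:chord-tangent} on $\yanifold$ near $Y_k$.

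The main obstacle is exactly this quantitative "nearly-normal displacement implies projection nearly at $Y_k$" estimate, because $u_k$ is only approximately normal, with error $\epsilon_k\to0$ and no rate. I would prove it with the local graph representation of Lemma~\ref{lem:local-graph} at $Y_k$, using uniform $C^2$ bounds on $\mu$ over a neighborhood of $Z$. This gives $\|Y'_k-Y_k\|\le C(\epsilon_k+\|X_k-Y_k\|)\|X_k-Y_k\|$, which is $o(\|X_k-Y_k\|)$. As a fallback, by Lemmas~\ref{lem:IT_diffeo} and~\ref{lem:clean_diffeo} one could first straighten $\yanifold$ to a linear subspace via a local diffeomorphism. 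This would make projections exact and the estimate trivial. However, projection-based transversality is not obviously diffeomorphism-invariant, so I would prefer the direct graph estimate.
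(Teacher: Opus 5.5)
Your proof is correct and has the same structure as the paper's. For the converse, you argue by contradiction and replace the offending pair $(X,Y)$ by the projection pair $(X,Y^\star)$, where $Y^\star=\projection_\yanifold(X)$. You then show the two unit directions nearly coincide and contradict condition (ii).

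The routes differ only in the step you single out as the main obstacle, the bound $\|Y^\star-Y\|=o(\|X-Y\|)$. Your graph-chart estimate is valid: the first-order condition for $Y^\star$ in a chart at $Y$ does give $\|Y^\star-Y\|\le\|\projection_{\tangent_\yanifold(Y)}(X-Y)\|+O(\|X-Y\|^2)$. However, it needs graph representations at the moving base points $Y_k$ with uniform domains and uniform $C^2$ bounds, which is more than Lemma~\ref{lem:local-graph} (stated at a single point) provides.

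The paper avoids this by using two facts from Lemma~\ref{lem:prox-regular}, which you invoke only for well-definedness. Write $\tau=\|X-Y\|$, $\varepsilon=\dist(u,\normal_\yanifold(Y))$, and let $v$ be the normalized projection of $u$ onto $\normal_\yanifold(Y)$. Then $\projection_\yanifold(Y+\tau v)=Y$ and $\|u-v\|\le\sqrt{2}\,\varepsilon$. Lipschitz continuity of the projection then gives $\|Y^\star-Y\|=\|\projection_\yanifold(Y+\tau u)-\projection_\yanifold(Y+\tau v)\|\le\sqrt{2}\,L\tau\varepsilon$ in one line. This bound has no $O(\tau^2)$ remainder, and all the uniformity is packaged into Federer's theorem. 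Since only $o(\tau)$ is needed, both bounds suffice; yours is more elementary given the chart bounds, while the paper's is shorter. Also, if you apply Lemma~\ref{lem:normalization} with $a=X-Y$, the denominator is $\tau$, so you do not need the lower bound on $\|X-Y^\star\|$ at all.

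There are two small slips in your forward direction.
\begin{itemize}
\item The phrase ``projection pairs are special cases'' is not enough by itself, because intrinsic transversality only bounds the maximum of the two distances. What actually yields (i) is that $Y-X\in\normal_\xanifold(X)$ by optimality of $X=\projection_\xanifold(Y)$. Hence $\dist(u,-\normal_\xanifold(X))=0$, which forces $\dist(u,\normal_\yanifold(Y))\ge\kappa$. You should state this explicitly.
\item In the case $X\in\xanifold\cap\yanifold$ you only get $\dist(u,\normal_\yanifold(Y))\ge\sqrt{1-C_\yanifold^2\|X-Y\|^2}$, which is in general strictly below $1$. So the choice $\kappa'=\kappa$ fails when $\kappa=1$, which can occur (e.g., vacuously when $\xanifold\subseteq\yanifold$ near $Z$). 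Take $\kappa'=\min\{\kappa,1/2\}$, as the paper does.
\end{itemize}
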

\begin{proof}
For the ``only if'' part, assume that the intrinsic transversality holds with constant $\kappa$. Let $Y\in(\yanifold\cap\neighbor)\setminus\xanifold$ and $X=\projection_\xanifold(Y)$, $u=(X-Y)/\|X-Y\|$. If $X\notin\yanifold$, the optimality of the projection gives $Y-X\in\normal_\xanifold(X)$, and thus $u\in-\normal_\xanifold(X)$, which together with~\eqref{eq:IT_kappa} directly gives $\dist(u,\normal_\yanifold(Y))\ge\kappa$. If $X\in\xanifold\cap\yanifold$, then $X,Y\in\yanifold$ and Lemma~\ref{lem:chord-tangent} applied with base point $Y$ gives $\dist(X-Y,\tangent_\yanifold(Y))\le C_\yanifold\|X-Y\|^2$ for some $C_{\yanifold}>0$. Hence $\dist(u,\tangent_\yanifold(Y))\le C_\yanifold\|X-Y\|$ and $\dist(u,\normal_\yanifold(Y))\ge 1/2$ for $\|X-Y\|$ small enough. Consequently, condition~(i) holds with $\kappa'=\min\{\kappa,1/2\}$. The symmetric argument yields~(ii).

For the ``if'' part, assume that projection-based transversality holds with constant $\kappa'>0$. By Lemma~\ref{lem:prox-regular}, for $\sanifold\in\{\xanifold,\yanifold\}$, there exist a neighborhood $\neighbor$ of $Z$, $\delta>0$, and $L>0$ such that $\projection_\sanifold$ is single-valued and $L$-Lipschitz on $\{\tilde{X}\in\eanifold\mid\dist(\tilde{X},\sanifold\cap \neighbor)<\delta\}$, and $\projection_\sanifold(X+tv)=X$ for $X\in \sanifold\cap \neighbor$, unit vector $v\in\normal_\sanifold(X)$, and $t\in(0,\delta)$. Suppose, for contradiction, that intrinsic transversality fails at $Z$. Then for every $\kappa\in (0,1)$, there exist $X\in (\xanifold\cap \neighbor)\setminus\yanifold$ and $Y\in(\yanifold\cap \neighbor)\setminus\xanifold$ with
\begin{equation}\label{eq:IT_fail}
\dist(u,\normal_\yanifold(Y))<\kappa
\quad\text{and}\quad
\dist(u,-\normal_\xanifold(X))<\kappa,
\end{equation}
where $u=(X-Y)/\|X-Y\|$. We show that~\eqref{eq:IT_fail} indeed implies a violation of projection-based transversality for the projection pair $(X, Y^\star)$ with $Y^\star:=\projection_\yanifold(X)$.

Set $\varepsilon:=\dist(u,\normal_\yanifold(Y))$, $\tau:=\|X-Y\|$, and $w:=\projection_{\normal_\yanifold(Y)}(u)$. Since $\varepsilon < 1$, the vector $w$ is nonzero, and thus $v:=w/\|w\|$ is a well-defined unit vector in $\normal_\yanifold(Y)$. The orthogonal decomposition $u = w + (u - w)$ yields $\|w\| = \sqrt{1-\varepsilon^2}$ and $\|u - v\|^2 =  2 - 2\sqrt{1-\varepsilon^2} \le 2\varepsilon^2$.
Shrinking $\neighbor$ if necessary such that $\tau < \delta$, Lemma~\ref{lem:prox-regular} ensures $\projection_\yanifold(Y+\tau v)=Y$. Since $X = Y + \tau u$, the Lipschitz continuity of $\projection_\yanifold$ yields
$\|Y^\star-Y\|
=\|\projection_\yanifold(X)-\projection_\yanifold(Y+\tau v)\|
\le L\tau\|u-v\| \le \sqrt{2}\,L\tau\varepsilon$,
and consequently, $\|X-Y^\star\| \ge \tau - \|Y-Y^\star\| \ge \tau(1-\sqrt{2}L\varepsilon) \ge \tau/2$ once $\varepsilon$ is small enough. Let $u^\star:=(X-Y^\star)/\|X-Y^\star\|$. Lemma~\ref{lem:normalization} with $a=X-Y^\star$ and $b=X-Y$ gives $\|u^\star-u\|\le 4\|Y-Y^\star\|/\tau\le 4\sqrt{2}L\varepsilon$. The $1$-Lipschitz property of the distance to a subspace then gives $\dist(u^\star,-\normal_\xanifold(X))
\le \dist(u,-\normal_\xanifold(X))+\|u^\star-u\|
< \kappa + 4\sqrt{2}L\varepsilon
< (1+4\sqrt{2}L)\kappa$, where the inequalities follow from~\eqref{eq:IT_fail}. When $\kappa$ is small such that $(1+4\sqrt{2}L)\kappa<\kappa'$, it contradicts projection-based transversality for $(X,Y^\star)$.
\end{proof}

Applying Theorem~\ref{thm:PB-implies-IT} to $(\xanifold,\yanifold)=(\manifold,\hanifold)$, intrinsic transversality ensures that the projection direction from $\manifold$ onto $\hanifold$ is not nearly normal to $\manifold$, that is,
\[
\|\projection_{\tangent_{\manifold}(X)}\kh{r(X)/\|r(X)\|}\|  = \dist\kh{r(X)/\|r(X)\|,\normal_{\manifold}(X)} \ge \kappa^\prime.
\]
Recalling that the Gauss--Newton direction $d(X)$ approximates the projection direction $r(X)$, a similar property carries over to $\Gh(X) = \projection_{\tangent_\manifold(X)}(d(X))$.

\begin{proposition}\label{pro:angle}
Assume that $\hanifold$ and $\manifold$ are intrinsically transversal at $Z\in \hanifold\cap \manifold$ with constant $\kappa$. Then there exist a neighborhood $\neighbor$ of $Z$ and a constant $\kappa_0>0$ such that for all $X\in \manifold\cap \neighbor$, $\dist\kh{d(X),\,\normal_{\manifold}(X)} \ge \kappa_0\,\|d(X)\|$. Equivalently,
\begin{equation}\label{eq:gauss_dist_bound}
    \|\Gh(X)\| = \|\projection_{\tangent_\manifold(X)}(d(X))\|\ge \kappa_0\|d(X)\|.
\end{equation}
\end{proposition}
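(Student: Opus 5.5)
Three reductions make the argument short: pass from intrinsic transversality to the projection-based condition by Theorem~\ref{thm:PB-implies-IT}, replace $d(X)$ by the projection residual $r(X)$ via Lemma~\ref{lem:GN_approx}, and absorb the resulting quadratic error. The equivalence of the two forms of the claim is immediate, since $\normal_\manifold(X)$ and $\tangent_\manifold(X)$ are orthogonal complementary subspaces. Hence $\dist(d,\normal_\manifold(X))=\|\projection_{\tangent_\manifold(X)}(d)\|=\|\Gh(X)\|$, and it suffices to prove the distance bound.

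\textbf{Case $h(X)=0$.} Choose $\neighbor$ small enough that $\neighbor\subseteq\kanifold$ and that both Lemma~\ref{lem:GN_approx} and condition~(ii) of Definition~\ref{def:PS-uniform} hold on $\neighbor$, with $(\xanifold,\yanifold)=(\manifold,\hanifold)$ and constant $\kappa'$. Here both manifolds are $C^2$; $\hanifold$ is, by Assumption~\ref{assu:h}, and I take $\manifold$ to be as well. If $X\in\manifold\cap\neighbor$ satisfies $h(X)=0$, then $d(X)=0$ by~\eqref{eq:GN_direction} and the inequality is trivial.

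\textbf{Case $X\notin\hanifold$.} Then $r(X)=\projection_\hanifold(X)-X\neq0$, and condition~(ii) with $Y=\projection_\hanifold(X)$ gives $\dist\big(r(X)/\|r(X)\|,\normal_\manifold(X)\big)\ge\kappa'$. Two facts are needed here. First, $u=(X-Y)/\|X-Y\|=-r/\|r\|$. Second, the distance to the subspace $-\normal_\manifold(X)=\normal_\manifold(X)$ is invariant under the sign flip. Multiplying through gives $\dist(r(X),\normal_\manifold(X))\ge\kappa'\|r(X)\|$.

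Now transfer this bound to $d(X)$. Because $v\mapsto\dist(v,\normal_\manifold(X))$ is $1$-Lipschitz, Lemma~\ref{lem:GN_approx} yields
\[
\dist(d(X),\normal_\manifold(X))\ge \kappa'\|r(X)\|-C_d\|r(X)\|^2 .
\]
Since $Z\in\hanifold$ and the projection is continuous, $\|r(X)\|\le\|X-Z\|$. Shrinking $\neighbor$ so that $C_d\|r(X)\|\le\kappa'/2$ gives the lower bound $\frac{\kappa'}{2}\|r(X)\|$. On the other side, $\|d(X)\|\le\|r(X)\|+C_d\|r(X)\|^2\le(1+\kappa'/2)\|r(X)\|$. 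Combining the two estimates gives the claim with $\kappa_0=\frac{\kappa'}{2+\kappa'}$.

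\textbf{Main obstacle.} The delicate point is the bookkeeping of neighborhoods. Theorem~\ref{thm:PB-implies-IT}, Lemma~\ref{lem:prox-regular} (single-valuedness of $\projection_\hanifold$) and Lemma~\ref{lem:GN_approx} each supply their own neighborhood of $Z$, and all of them must hold simultaneously. Moreover, condition~(ii) is only stated for $X\in\manifold\cap\neighbor$, so we must take the intersection of these neighborhoods and then shrink it further so that $\|r(X)\|$ is small. Because $Z\in\hanifold$ forces $r(X)\to0$ as $X\to Z$, this shrinking is always possible, and the argument closes.
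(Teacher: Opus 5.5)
Your proof is correct and follows essentially the paper's route. You obtain the bound on $r(X)/\|r(X)\|$ from the ``only if'' direction of Theorem~\ref{thm:PB-implies-IT}, whereas the paper redoes that case analysis inline ($X_p\notin\manifold$ via intrinsic transversality, $X_p\in\manifold$ via Lemma~\ref{lem:chord-tangent}), and both then transfer the bound to $d(X)$ through Lemma~\ref{lem:GN_approx}. Your unnormalized $1$-Lipschitz estimate, combined with $\|d(X)\|\le(1+\kappa'/2)\|r(X)\|$, is an equally valid substitute for the paper's comparison of $\hat d$ and $\hat r$ via Lemma~\ref{lem:normalization}; note also that $\|r(X)\|\le\|X-Z\|$ holds simply because $Z\in\hanifold$, with no continuity needed.
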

\begin{proof}
By Lemma~\ref{lem:prox-regular}, $\projection_{\hanifold}$ is single-valued and Lipschitz in a neighborhood $\neighbor$ of $Z$. Take $X\in\manifold\cap\neighbor\setminus\hanifold$ and set $X_p:=\projection_{\hanifold}(X)$, $r(X):=X_p - X$, $\hat{r}:=r(X)/\|r(X)\|$. The projection gives $X - X_p \in \normal_{\hanifold}(X_p)$, which implies that $-\hat{r}\in\normal_{\hanifold}(X_p)$.

We first show that $\hat{r}$ is bounded away from $\normal_{\manifold}(X)$. If $X_p\notin\manifold$, applying intrinsic transversality~\eqref{eq:IT_kappa} to the pair $(X, X_p)$ with $u = -\hat{r}$ yields $\dist(-\hat{r},-\normal_{\manifold}(X))\ge\kappa$, i.e., $\dist(\hat{r},\normal_{\manifold}(X))\ge\kappa$. If $X_p\in\manifold$, then both $X$ and $X_p$ lie on $\manifold$, and Lemma~\ref{lem:chord-tangent}, after shrinking $\neighbor$ if necessary, gives $\dist(\hat{r},\normal_{\manifold}(X))\ge 1/2$. In either case, we have
\begin{equation}\label{eq:rhat_angle}
\dist(\hat{r},\normal_{\manifold}(X))\ge\min\{\kappa,1/2\}=:\kappa_1.
\end{equation}

It remains to transfer this bound from the projection residual $r(X)$ to the Gauss--Newton direction $d(X)$. By Lemma~\ref{lem:GN_approx}, $\|d(X)-r(X)\|\le C\|r(X)\|^2$ for a constant $C>0$. Shrinking $\neighbor$ such that $\|d(X)-r(X)\|\le\frac{1}{2}\|r(X)\|$ ensures that $\|d(X)\|\ge\frac{1}{2}\|r(X)\|>0$. Writing $\hat{d}:=d(X)/\|d(X)\|$, Lemma~\ref{lem:normalization} gives $\|\hat{d}-\hat{r}\|\le 2\|d(X)-r(X)\|/\|d(X)\|\le 4C\|r(X)\|$. This, together with~\eqref{eq:rhat_angle} and the $1$-Lipschitz property of the distance to a subspace, yields $\dist(\hat{d},\normal_{\manifold}(X))\ge\kappa_1-4C\|r(X)\|$. Shrinking $\neighbor$ further such that $4C\|r(X)\|<\kappa_1/2$, we obtain $\dist(\hat{d},\normal_{\manifold}(X))\ge\kappa_1/2$. The conclusion follows by taking $\kappa_0:=\kappa_1/2$ and noting $\|\projection_{\tangent_{\manifold}(X)}({d})\| = \dist({d},\normal_{\manifold}(X))$.
\end{proof}

Proposition~\ref{pro:angle} shows that the projected Gauss--Newton direction preserves a certain component of the original $d(X)$, and thus $\Gh(X)$ serves as a descent direction for the feasibility measure $\dist(\cdot,\hanifold)$; the formal statement is deferred to Proposition~\ref{prop:delta_descent}.

\section{Clean intersection and optimality direction}\label{subsec:clean}
We now turn to the optimality direction. Recall from~\eqref{eq:constructionofGnGt} that constructing $\Gf$ reduces to choosing $\eta(X)\in\eanifold$ and projecting it onto $S(X)=\tangent_{\hanifoldX\cap\manifold}(X)$. A natural choice for decreasing the objective is the negative gradient $\eta(X) = -\nabla f(X)$, yielding
\begin{equation}\label{eq:Gt}
    \Gf(X) = \projection_{S(X)}(-\nabla f(X)).
\end{equation}

\subsection{Equivalence between intrinsic transversality and clean intersection}
The immediate obstacle is that the structure of the tangent cone $\tangent_{\hanifoldX\cap\manifold}(X)$ remains unclear, which impedes the computation of~\eqref{eq:Gt}. To circumvent it, we then investigate the relationship between $\tangent_{\hanifoldX\cap\manifold}(X)$ and the intersection $\tangent_{\hanifoldX}(X)\cap\tangent_{\manifold}(X)$, enlightened by the general inclusion~\eqref{eq:cone_oneside}. As a result, it is concluded in the following theorem that intrinsic transversality indeed implies clean intersection (see Definition~\ref{def:clean_intersection}) for smooth manifolds, answering the question posed in~\cite[\S8]{drusvyatskiy2015intrinsictransversality}.

\begin{theorem}\label{thm:IT_implies_clean}
Let $\xanifold,\yanifold\subseteq\eanifold$ be $C^1$ embedded submanifolds and let $Z\in\xanifold\cap\yanifold$. Then $\xanifold$ and $\yanifold$ are intrinsically transversal at $Z$ if and only if they intersect cleanly at $Z$, i.e., there exists a neighborhood $\neighbor$ of $Z$ such that
$\xanifold\cap \yanifold \cap \neighbor$ is a $C^1$ embedded submanifold and
\begin{equation}\label{eq:clean_tangent}
\tangent_{\xanifold\cap\yanifold}(X)=\tangent_{\xanifold}(X)\cap \tangent_{\yanifold}(X) \quad \text{for all } X\in\xanifold\cap\yanifold\cap\neighbor.
\end{equation}
\end{theorem}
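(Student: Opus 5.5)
The converse direction is already available: for manifolds, clean intersection implies intrinsic transversality~\cite{drusvyatskiy2015intrinsictransversality}. So the work is to show that intrinsic transversality with constant $\kappa$ at $Z$ forces clean intersection. My plan has three steps:
\begin{enumerate}[leftmargin=2em]
\item[(a)] show that $\tangent_\xanifold(X)$ and $\tangent_\yanifold(X)$ make a uniformly positive Friedrichs angle at every intersection point near $Z$;
\item[(b)] show that the Bouligand tangent cone of $\xanifold\cap\yanifold$ equals $V(X):=\tangent_\xanifold(X)\cap\tangent_\yanifold(X)$;
\item[(c)] show that a locally closed set whose tangent cones are subspaces varying continuously is a $C^1$ embedded submanifold.
\end{enumerate}

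For (a), I first use Lemma~\ref{lem:IT_diffeo} and Lemma~\ref{lem:clean_diffeo} to reduce to the case where $\xanifold$ is locally a linear subspace $\tanL$. The reduction uses the graph chart of Lemma~\ref{lem:local-graph}: the map $X\mapsto X-\mu(\projection_\tanL(X-Z))$ is a local $C^1$ diffeomorphism flattening $\xanifold$. Then I fix $X\in\xanifold\cap\yanifold$ near $Z$ and a unit vector $u\in\normal_\yanifold(X)\cap(-\normal_\xanifold(X))$ orthogonal to $\normal_\xanifold(X)\cap\normal_\yanifold(X)$. I would construct nearby points $X'\in\xanifold\setminus\yanifold$ and $Y'\in\yanifold\setminus\xanifold$ with $(X'-Y')/\|X'-Y'\|\approx u$. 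Since $\xanifold$ is flat, the candidates are $X'=X+tw$ with $w\in\tanL$ suitably chosen, and $Y'$ a nearby point of $\yanifold$ moved along a normal. By $C^1$ continuity of the normal spaces, \eqref{eq:IT_kappa} then yields the quantitative bound
\[
\dist(u,\normal_\yanifold(X))^2+\dist(u,-\normal_\xanifold(X))^2\gtrsim\kappa^2
\]
for unit $u$ orthogonal to $\normal_\xanifold(X)\cap\normal_\yanifold(X)$, uniformly in $X$ near $Z$. This is the statement that the cosine of the Friedrichs angle between $\normal_\xanifold(X)$ and $\normal_\yanifold(X)$, equivalently between the two tangent spaces, is at most $1-c\kappa^2$. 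I expect this step to be the main obstacle. The delicate point is producing pairs $(X',Y')$ off the intersection whose difference direction realizes a prescribed "bad" direction $u$, which is needed because Definition~\ref{def:intrinsic_transversality} only tests pairs with $X'\notin\yanifold$ and $Y'\notin\xanifold$. If a direct construction fails, I would instead appeal to the known consequence of intrinsic transversality, namely subtransversality (a local metric inequality), and extract the angle bound from it.

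For (b), the inclusion $\tangent_{\xanifold\cap\yanifold}(X)\subseteq V(X)$ is \eqref{eq:cone_oneside}. For the reverse inclusion I use that intrinsic transversality implies the local error bound
\[
\dist(W,\xanifold\cap\yanifold)\le C\max\{\dist(W,\xanifold),\dist(W,\yanifold)\}
\]
near $Z$, which holds at all nearby intersection points with the same $C$. For $\eta\in V(X)$ both distances at $W=X+t\eta$ are $o(t)$, so $\dist(X+t\eta,\xanifold\cap\yanifold)=o(t)$. Hence $\eta$ lies in the tangent cone and \eqref{eq:clean_tangent} holds at the level of cones.

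For (c), the uniform Friedrichs-angle bound from (a) makes $\tanL_1+\tanL_2$ uniformly closed. This makes $(\tanL_1,\tanL_2)\mapsto\tanL_1\cap\tanL_2$ continuous along $X\mapsto(\tangent_\xanifold(X),\tangent_\yanifold(X))$: $\projection_{V(X)}$ is the uniform limit of alternating products $(\projection_{\tangent_\xanifold}\projection_{\tangent_\yanifold})^k$, with a geometric rate depending only on the angle. Consequently $\dim V(X)$ is constant near $Z$ and $V$ is continuous. Finally I show that $\pi:=\projection_{V(Z)}(\cdot-Z)$ restricted to $\xanifold\cap\yanifold$ is locally a homeomorphism onto a neighborhood of $0$ in $V(Z)$:
\begin{enumerate}[leftmargin=2em]
\item[(i)] injectivity and a Lipschitz inverse follow because chords of the set are almost contained in $V(\cdot)\approx V(Z)$, by a chord estimate in the spirit of Lemma~\ref{lem:chord-tangent} derived from the tangent-cone identity and continuity;
\item[(ii)] surjectivity follows by a Newton/alternating-projection correction using the error bound.
\end{enumerate}
The inverse is then a Lipschitz graph whose derivative exists everywhere and equals the graph of $V(X)$. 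Continuity of $V$ upgrades this to $C^1$, giving the manifold structure and hence clean intersection.
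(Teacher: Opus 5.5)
Your route (uniform angle bound, tangent-cone identity from the error bound, then a bi-Lipschitz graph over $V(Z)$) differs genuinely from the paper's and can be completed. As written, though, it leaves the crux open and rests on one false intermediate claim.

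\textbf{Step (a) is not carried out.} This is the step where the hypothesis must actually do its work. As literally stated, its set-up is vacuous. Since $\normal_\xanifold(X)$ is a subspace, a unit $u\in\normal_\yanifold(X)\cap(-\normal_\xanifold(X))$ lies in $\normal_\xanifold(X)\cap\normal_\yanifold(X)$, so it cannot be orthogonal to that set. The construction of off-intersection pairs is then left as an unresolved obstacle.

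The missing step is short, and it uses the tool you already invoke in (b): the subtransversality inequality linearizes. For $X\in\xanifold\cap\yanifold$ near $Z$, put $W=X+t\eta$, divide by $t$, and let $t\downarrow0$. For any locally closed $\sanifold$ one has $\liminf_{t\downarrow0}\dist(X+t\eta,\sanifold)/t\ge\dist(\eta,\tangent_\sanifold(X))$, and $\tangent_{\xanifold\cap\yanifold}(X)\subseteq V(X)$. Together these give
\[
\dist\big(\eta,V(X)\big)\le C\max\big\{\dist(\eta,\tangent_\xanifold(X)),\,\dist(\eta,\tangent_\yanifold(X))\big\}\quad\text{for all }\eta\in\eanifold,
\]
with the same $C$ at every nearby intersection point. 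Testing this with a unit $a\in\tangent_\xanifold(X)\cap V(X)^\perp$ gives $\|\projection_{\tangent_\yanifold(X)}a\|^2\le 1-C^{-2}$, which is exactly the uniform Friedrichs bound you want.

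Your direct construction can also be finished. Fix $X$ and choose $a$ attaining the Friedrichs cosine $\cos\theta$. Take $X'\in\xanifold$ with $X'=X+ta+o(t)$, and $Y'$ a nearest point of $\yanifold$ to $X'$, then let $t\downarrow0$. The directions $u$ tend to $(a-\projection_{\tangent_\yanifold(X)}a)/\sin\theta$. This vector lies in $\normal_\yanifold(X)$ and has distance exactly $\sin\theta$ to $\normal_\xanifold(X)$, so \eqref{eq:IT_kappa} forces $\sin\theta\ge\kappa$.

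\textbf{The principle announced in (c) is false for general sets.} The set $\{(x,0)\}\cup\{(x,x^2)\}\subseteq\mbR^2$ is closed, and all its tangent cones are lines varying continuously with the point. Yet it is not a manifold at the origin. Its vertical chords from $(x,0)$ to $(x,x^2)$ show that no chord estimate follows from tangent-cone data of the set alone. So the chord estimate in (i) cannot be "derived from the tangent-cone identity and continuity."

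What makes (i) true is the ambient structure. A chord $X_1-X_2$ of $\xanifold\cap\yanifold$ is a chord of both manifolds. By the $C^1$ version of Lemma~\ref{lem:chord-tangent}, it is therefore within $\epsilon\|X_1-X_2\|$ of both $\tangent_\xanifold(X_2)$ and $\tangent_\yanifold(X_2)$, with $\epsilon\to0$ as the neighborhood shrinks. The uniform linear regularity from (a) turns this into $\dist(X_1-X_2,V(X_2))\le C\epsilon\|X_1-X_2\|$. With this in place, the following all go through:
\begin{enumerate}[leftmargin=2em]
\item[(1)] your alternating-projection argument for continuity and constant dimension of $V$;
\item[(2)] the injectivity in (i);
\item[(3)] the error-bound iteration in (ii);
\item[(4)] the $C^1$ upgrade.
\end{enumerate}

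\textbf{Comparison with the paper.} The paper uses no error bound and no angle estimate. It flattens $\xanifold$ to $\tanL$ and writes $\yanifold$ as the graph $(k,a(k,v),v,b(k,v))$ over $\tanQ$ in the splitting $\kanifold\oplus\uanifold\oplus\vanifold\oplus\wanifold$. It then applies intrinsic transversality once, to the explicit pairs $(k_j,a(k_j,0),0,0)\in\tanL$ and $(k_j,a(k_j,0),0,b(k_j,0))\in\yanifold$, whose difference lies in the common normal space $\wanifold$. This forces $b(\cdot,0)\equiv0$. The intersection is then visibly the $C^1$ graph $\xi\mapsto(\xi,a(\xi,0),0,0)$ over $\tanL\cap\tanQ$, and the tangent identity follows from $\partial_k b(\xi,0)=0$. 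That argument is shorter and self-contained.

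Your route instead imports the nontrivial implication from intrinsic transversality to subtransversality~\cite{drusvyatskiy2015intrinsictransversality}. With the linearization fix, it uses the hypothesis only through that metric inequality. It therefore actually shows that subtransversality alone forces clean intersection of $C^1$ manifolds, and it yields a uniform angle bound and a bi-Lipschitz chart as by-products.
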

\begin{proof}
The ``if'' part is shown in~\cite[\S3]{drusvyatskiy2015intrinsictransversality}, and thus it suffices to prove the converse. By an appropriate translation, we assume $Z=0$ and write $\tanL:=\tangent_{\xanifold}(0)$.
According to Lemma~\ref{lem:local-graph}, there exist a neighborhood $U_{\tanL}\subseteq\tanL$ of $0$ and a $C^1$ map $\mu:U_{\tanL}\to \tanL^\perp$ with $\xanifold\cap \neighbor_1=\hkh{\, u+\mu(u) \mid u\in U_{\tanL}\,}$, $\mu(0)=0$, and $\diff \mu(0)=0$ for a neighborhood $\neighbor_1$ of $Z$.
Define the $C^1$ map $\Psi:\eanifold\to\eanifold$ by $\Psi(u+n):=u+(n-\mu(u))$ for $u\in\tanL$, $n\in\tanL^\perp$. One readily verifies that $\Psi(0)=0$ and $\diff\Psi(0)=I$, and thus $\Psi$ is a local $C^1$ diffeomorphism near $0$. Moreover, $\Psi(u+\mu(u)) = u$ for all $u\in U_\tanL$, and thus $\Psi$ maps $\xanifold$ locally onto $\tanL$.
By Lemma~\ref{lem:IT_diffeo}, $\tanL$ and $\Psi(\yanifold)$ are intrinsically transversal at~$0$. It therefore suffices to prove clean intersection for $\tanL\cap\Psi(\yanifold)$. For brevity, we slightly abuse the notation by renaming $\Psi(\yanifold)$ as $\yanifold$ in the following analysis.

Set $\tanQ:=\tangent_{\yanifold}(0)$ and decompose $\eanifold$ into four orthogonal subspaces by $\eanifold=\kanifold\oplus\uanifold\oplus\vanifold\oplus\wanifold$, 
where $\kanifold:=\tanL\cap \tanQ$, $\tanL=\kanifold\oplus \uanifold$, $\tanQ=\kanifold\oplus \vanifold$, and $\wanifold:=(\tanL+\tanQ)^\perp$; we remark that the orthogonality between $\uanifold$ and $\vanifold$ can be guaranteed after applying an invertible linear transformation to $(\tanL,\yanifold)$ if needed. Points are written as $p=(k,u,v,w)$ accordingly. By Lemma~\ref{lem:local-graph} applied to $\yanifold$ with tangent space $\tanQ$, there exists a $C^1$ map $g:U_{\tanQ}\to \tanQ^\perp=\uanifold\oplus\wanifold$ with $g(0)=0$ and $\diff g(0)=0$ such that, writing $g(k,v)=(a(k,v),\,b(k,v))$ with $a:\kanifold\oplus\vanifold\to \uanifold$ and $b:\kanifold\oplus\vanifold\to\wanifold$,
\begin{equation}\label{eq:H_param}
\yanifold\cap \neighbor_3
=
\hkh{\, (k,\ a(k,v),\ v,\ b(k,v)) \mid (k,v)\in U_{\tanQ}\,}
\end{equation}
for a neighborhood $\neighbor_3$ of $0$. Throughout the proof, we identify the tuple $(k,u,v,w)\in\kanifold\oplus\uanifold\oplus\vanifold\oplus\wanifold$ with the element $k+u+v+w\in\eanifold$, according to the decomposition.

We claim that there exists $\varepsilon>0$ such that
\begin{equation}\label{eq:b_vanish}
b(k,0)=0 \quad \text{for all } k\in\kanifold \text{ with } \norm{k}<\varepsilon.
\end{equation}
Suppose otherwise: there exists a sequence $k_j\to 0$ with $b(k_j,0)\neq 0$ for all $j$. Setting $v=0$ in~\eqref{eq:H_param} produces the pairs $x_j:=(k_j,\ a(k_j,0),\ 0,\ 0)\in \tanL$, $y_j:=(k_j,\ a(k_j,0),\ 0,\ b(k_j,0))\in \yanifold$.
Since $b(k_j,0)\neq 0$, the point $y_j$ has a nonzero $\wanifold$-component, and thus $y_j\notin\tanL$. Moreover, $\projection_\tanQ(x_j) = (k_j,0) = \projection_\tanQ(y_j)$, and since $\projection_\tanQ|_\yanifold$ is a local diffeomorphism, $y_j$ is the unique point of $\yanifold$ near $0$ projecting to $(k_j,0)$; as $x_j\neq y_j$, we have $x_j\notin\yanifold$. The difference $x_j-y_j=(0,0,0,-b(k_j,0))$ lies in $\wanifold\subseteq\normal_\tanL(x_j)=\vanifold\oplus\wanifold$, and thus the unit vector $u_j:=(x_j-y_j)/\norm{x_j-y_j}$ satisfies $\dist(u_j,-\normal_{\tanL}(x_j))=0$. On the other hand, since $\yanifold$ is locally the graph $t\mapsto t+g(t)$ over $\tanQ$, the tangent space at $y_j$ is $\tangent_{\yanifold}(y_j) = \{\dot{t}+\diff g(k_j,0)[\dot{t}] \mid \dot{t}\in\tanQ\}$. Consider the vector $n_j:=-\diff g(k_j,0)^*u_j+u_j$, and we can verify that $\innerp{n_j,\,\dot t+\diff g(k_j,0)[\dot t]}=0$ for all $\dot{t}\in\tanQ$, thereby $n_j\in\normal_{\yanifold}(y_j)$. Noticing that $\diff g(k_j,0)\to 0$, we have $\norm{n_j-u_j}\to 0$, and thus $\lim_{j\to\infty}\max\hkh{\dist(u_j,\normal_{\yanifold}(y_j)),\,\dist(u_j,-\normal_{\tanL}(x_j))} = 0$, which contradicts the intrinsic transversality of $\tanL\cap\yanifold$ at $0$.

According to~\eqref{eq:H_param} and~\eqref{eq:b_vanish}, the intersection $\tanL\cap\yanifold$ is parametrized by
$F:\kanifold\to\eanifold:\,\xi\mapsto (\xi,\ a(\xi,0),\ 0,\ 0)$ in the sense that $\tanL\cap\yanifold\cap\neighbor_4=\{F(\xi)\mid \xi\in\kanifold,\,\|\xi\|<\varepsilon\}$ for a neighborhood $\neighbor_4$ of $0$. Note that $F$ is $C^1$ with the injective differential $\diff F(0):\xi\mapsto(\xi,0,0,0)$ (since $\diff a(0,0)=0$). By~\cite[Theorem~4.25]{lee2012manifolds}, $F$ is a local embedding, and thus $\tanL\cap\yanifold$ is a $C^1$ submanifold of dimension $\dime(\kanifold)$ near the origin.

It remains to verify $\tangent_{\tanL\cap\yanifold}(p) = \tangent_\tanL(p)\cap\tangent_\yanifold(p)$ at every nearby intersection point $p=F(\xi)$. The inclusion ``$\subseteq$'' holds in general by~\eqref{eq:cone_oneside}. For ``$\supseteq$'', take any $(\dot k,\dot u,\dot v,\dot w)\in\tangent_\tanL(p)\cap\tangent_\yanifold(p)$. Membership in $\tangent_{\tanL}(p) = \kanifold\oplus\uanifold$ forces $\dot v=0$ and $\dot w=0$. Differentiating~\eqref{eq:H_param}, $(\dot k,\dot u,\dot v,\dot w)\in\tangent_\yanifold(p)$ requires $\dot u = \partial_k a(\xi,0)[\dot k]+\partial_v a(\xi,0)[\dot v]$ and $\dot w = \partial_k b(\xi,0)[\dot k]+\partial_v b(\xi,0)[\dot v]$. Substituting $\dot v=0$ gives $\dot u = \partial_k a(\xi,0)[\dot k]$ and $\dot w = \partial_k b(\xi,0)[\dot k]$. Since~\eqref{eq:b_vanish} implies $\partial_k b(\xi,0)=0$, the condition $\dot w = 0$ is satisfied, and the vector $(\dot k,\,\partial_k a(\xi,0)[\dot k],\,0,\,0) = \diff F(\xi)[\dot k]$ lies in $\ima(\diff F(\xi)) = \tangent_{\tanL\cap\yanifold}(p)$, establishing the ``$\supseteq$''. Applying Lemma~\ref{lem:clean_diffeo} through $\Psi^{-1}$ completes the proof.
\end{proof}

\subsection{Projection onto the tangent space} Recalling from Assumption~\ref{assu:IT} that in the neighborhood $\kanifold$, the manifolds $\hanifoldX$ and $\manifold$ are intrinsically transversal at the common point $X$. Therefore, applying Theorem~\ref{thm:IT_implies_clean} to the pair $(\hanifoldX,\manifold)$ gives the formula of the tangent cone~\eqref{eq:tangent_decom}.
This identification reveals that $S(X)=\tangent_{\hanifoldX\cap\manifold}(X)$ is a linear subspace, determined by $\tangent_\manifold(X)$ and $\tangent_{\hanifoldX}(X)$. Subsequently, the projection of any $\xi\in\eanifold$ onto $S(X)$ admits a closed-form expression; see the following proposition.

\begin{proposition}\label{pro:proj_SX}
Assume that $\hanifoldX$ and $\manifold$ are intrinsically transversal at
$X$ and let $\Phi_\manifold(X):=\projection_{\tangent_\manifold(X)}\circ\diff h_X^*$.
Then for all $\xi\in\eanifold$, it holds that
\begin{equation}\label{eq:proj_SX}
\projection_{S(X)}(\xi)
= \projection_{\tangent_\manifold(X)}(\xi)
  - \Phi_\manifold(X)
    \big(\diff h_X\circ\Phi_\manifold(X)\big)^{\dagger}
    \diff h_X\big(\projection_{\tangent_\manifold(X)}(\xi)\big).
\end{equation}
If additionally, $\hanifoldX$ and $\manifold$ are transversal at $X$,
then $\diff h_X\circ\Phi_\manifold(X)$ is invertible and the Moore--Penrose pseudoinverse reduces to the ordinary inverse.
\end{proposition}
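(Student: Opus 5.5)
By Theorem~\ref{thm:IT_implies_clean} applied to the pair $(\hanifoldX,\manifold)$, the intersection is clean at $X$. Hence \eqref{eq:tangent_decom} holds, and
\[
S(X)=\tangent_\manifold(X)\cap\tangent_{\hanifoldX}(X)=\tangent_\manifold(X)\cap\ker(\diff h_X),
\]
using $\tangent_{\hanifoldX}(X)=\ker\diff h_X$ under Assumption~\ref{assu:h}. Write $T:=\tangent_\manifold(X)$, $P:=\projection_T$, $A:=\diff h_X$ and $\Phi:=\Phi_\manifold(X)=PA^*$. Then $S(X)=\ker(A|_T)$, viewed as a subspace of $T$. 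Since $S(X)\subseteq T$, we have $\projection_{S(X)}=\projection_{S(X)}\circ P$. It therefore suffices to compute the projection of $\zeta:=P\xi\in T$ onto the kernel of the linear map $A|_T:T\to\mbR^q$.

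For this I use the standard finite-dimensional fact. For a linear map $B:T\to\mbR^q$, the orthogonal complement of $\ker B$ inside $T$ is $\ima(B^*)$, so
\[
\projection_{\ker B}=I-B^*(BB^*)^\dagger B .
\]
Justification: $B^*(BB^*)^\dagger B$ is the orthogonal projector onto $\ima(B^*)$, because $B^*(BB^*)^\dagger=B^\dagger$ and $B^\dagger B=\projection_{\ima B^*}$.

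The adjoint of $B=A|_T$, as a map into $T$, is $B^*=PA^*=\Phi$. Moreover $BB^*=AP A^*=A\Phi=\diff h_X\circ\Phi$. Substituting gives
\[
\projection_{S(X)}(\xi)=\zeta-\Phi(A\Phi)^\dagger A\zeta,
\]
which is exactly \eqref{eq:proj_SX}.

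The key checks are the identity $B^*(BB^*)^\dagger B=\projection_{\ima B^*}$ and the adjoint computation, where $P$ is self-adjoint and idempotent. For the first, take $y=B^*z$. Then $B^*(BB^*)^\dagger BB^*z=B^*\projection_{\ima(BB^*)}z=B^*z$, since $\ker B^*=\ker BB^*$ implies $B^*$ vanishes on $\ima(BB^*)^\perp$. For $y\in\ker B$ the expression is $0$. This is the one step needing care with the pseudoinverse rather than an inverse. It is also where intrinsic transversality is used: without \eqref{eq:tangent_decom} we would only have $S(X)\subseteq T\cap\ker A$, and the formula would describe the wrong set.

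For the transversal case, $T+\ker A=\eanifold$ gives $\normal_\manifold(X)\cap\ima(A^*)=\{0\}$, so $PA^*$ is injective on $\mbR^q$ (using that $A^*$ is injective by Assumption~\ref{assu:h}). Then $A\Phi=\Phi^*\Phi$ on $\mbR^q$, and it is positive definite because $\langle A PA^*z,z\rangle=\|PA^*z\|^2$. Hence $A\Phi$ is invertible and $(A\Phi)^\dagger=(A\Phi)^{-1}$.
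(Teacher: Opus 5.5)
Your proof is correct and follows essentially the same route as the paper. Both arguments invoke Theorem~\ref{thm:IT_implies_clean} to get $S(X)=\ker(\diff h_X|_{\tangent_\manifold(X)})$, identify the adjoint of this restriction as $\Phi_\manifold(X)$, and remove the $\ima(\Phi_\manifold(X))$-component of $\projection_{\tangent_\manifold(X)}(\xi)$ via a pseudoinverse. The differences are cosmetic: the paper takes the coefficient as a representative solution of the consistent system $\diff h_X\Phi_\manifold(X)\lambda=\diff h_X\projection_{\tangent_\manifold(X)}(\xi)$, while you verify the projector identity $B^*(BB^*)^\dagger B=\projection_{\ima B^*}$ directly, and your positive-definiteness argument makes the invertibility in the transversal case slightly more explicit.
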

\begin{proof}
We note that $\tangent_{\hanifoldX}(X) = \ker(\diff h_X)$ and $\normal_{\hanifoldX}(X) = \ima(\diff h_X^*)$. By Theorem~\ref{thm:IT_implies_clean}, the clean intersection~\eqref{eq:tangent_decom} holds. This implies that $S(X)$ is a linear subspace of $\tangent_\manifold(X)$, and thus $\projection_{S(X)}(\xi) = \projection_{S(X)}(\bar\xi)$ with $\bar\xi:=\projection_{\tangent_\manifold(X)}(\xi)$. Therefore, it suffices to project $\bar\xi\in\tangent_\manifold(X)$ onto $S(X)$.

We claim that $\tangent_\manifold(X) = S(X) \oplus \ima(\Phi_\manifold(X))$. To see this, denote the restricted linear map $L:=\diff h_X|_{\tangent_\manifold(X)}:\tangent_\manifold(X)\to\mbR^q$ with $S(X) = \ker(L)$. The adjoint $L^*:\mbR^q\to\tangent_\manifold(X)$ is characterized by $\innerp{L\eta, v} = \innerp{\eta, L^*v}$ for all $\eta\in\tangent_\manifold(X)$ and $v\in\mbR^q$. Since $\innerp{\diff h_X(\eta), v} = \innerp{\eta, \diff h_X^*(v)} = \innerp{\eta, \projection_{\tangent_\manifold(X)}(\diff h_X^*(v))}$, where the last equality uses $\eta\in\tangent_\manifold(X)$, we identify $L^*(v) = \Phi_\manifold(X)(v)$, and thus $\ima(\Phi_\manifold(X)) = \ima(L^*)$. The theory of linear algebra indicates that $\ker(L)$ and $\ima(L^*)$ are orthogonal complements within $\tangent_\manifold(X)$. Hence, the projection of $\bar\xi$ onto $S(X)$ is $\bar\xi - \projection_{\ima(\Phi_\manifold(X))}(\bar\xi)$.

To compute $\projection_{\ima(\Phi_\manifold(X))}(\bar\xi)$, write it as $\Phi_\manifold(X)\lambda$ for some $\lambda\in\mbR^q$. Since $\bar\xi - \Phi_\manifold(X)\lambda \in S(X) = \ker(\diff h_X|_{\tangent_\manifold(X)})$, we have $\diff h_X(\Phi_\manifold(X)\lambda) = \diff h_X(\bar\xi)$. The right-hand side satisfies $\diff h_X(\bar\xi) \in \ima(\diff h_X|_{\tangent_\manifold(X)}) = \ima(\diff h_X\circ\Phi_\manifold(X))$, and thus the system is consistent. A representative solution is $\lambda = (\diff h_X\circ\Phi_\manifold(X))^\dagger\diff h_X(\bar\xi)$, which is then substituted into the projection $\projection_{S(X)}(\bar\xi) = \bar\xi - \Phi_\manifold(X)\lambda$ to obtain~\eqref{eq:proj_SX}.
Under transversality, $\tangent_\manifold(X)+\ker(\diff h_X)=\eanifold$, equivalent to $\normal_\manifold(X)\cap\ima(\diff h_X^*)=\{0\}$. This means $\Phi_\manifold(X) = \projection_{\tangent_\manifold(X)}\circ\diff h_X^*$ is injective, and thus $\diff h_X\circ\Phi_\manifold(X)$ is invertible.
\end{proof}

Proposition~\ref{pro:proj_SX} provides a recipe for computing the projection onto the tangent space of the manifold intersection under intrinsic transversality, and in particular yields the optimality direction $\Gf(X)$~\eqref{eq:Gt}. We then apply it to the case when $\hanifold$ and $\manifold$ are the hyperboloid and the low-rank manifold, respectively; see Appendix~\ref{app:proj-hyperboloid}.

\section{Geometric method and convergence analysis}\label{sec:convergence}
Combining the feasibility direction~\eqref{eq:GN_direction} and the optimality direction~\eqref{eq:Gt}, we now present the geometric method via orthogonal tangent directions in Algorithm~\ref{alg:GOTD}.

\begin{algorithm}[htbp]
\renewcommand{\thealgorithm}{1}
\caption{Geometric method via Orthogonal Tangent Directions (GOTD)}\label{alg:GOTD}
\renewcommand{\algorithmicrequire}{\textbf{Input:}}
\renewcommand{\algorithmicensure}{\textbf{Output:}}
\begin{algorithmic}[1]
\REQUIRE Initial point $X_0\in\manifold$,  iteration number $K$, step sizes $\{\alpha_k,\beta_k\}_{k=0}^{K-1}$
\FOR{$k=0,1,\ldots,K-1$}
    \STATE Gauss--Newton direction $d_k = -\diff h_{X_k}^*\big(\diff^{} h_{X_k} \diff h_{X_k}^*\big)^{-1}h(X_k)$
    \STATE Feasibility direction $\Gh(X_k) = \projection_{\tangent_{\manifold}(X_k)}(d_k)$
    \STATE Optimality direction $\Gf(X_k)  = \projection_{S_k}(-\nabla f(X_k))$, $S_k :=\tangent_{\hanifold_{X_k}}(X_k)\cap\tangent_{\manifold}(X_k)$\hspace{-5mm}
    \STATE Update $X_{k+1} = \retrac_{X_k}^{\manifold}\big(\alpha_k \Gh(X_k) + \beta_k \Gf(X_k)\big)$
\ENDFOR
\ENSURE $X_K$
\end{algorithmic}
\end{algorithm}

Noting that the iterates generated by Algorithm~\ref{alg:GOTD} are not necessarily on the set $\hanifold$, we carry out the convergence analysis in a neighborhood of $\hanifold$ and show that the iterates approach $\hanifold$ asymptotically. In light of the tubular neighborhood theorem~\cite[Theorem~6.24]{lee2012manifolds}, we make the following assumption.
\begin{assumption}[Tubular neighborhood]\label{assu:tubular_new}
There exists $\Delta>0$ such that $\projection_\hanifold$ is single-valued and Lipschitz on the tube $\kanifold:=\{X\in\eanifold \mid \delta(X)<\Delta\}$, where $\delta(X):=\dist(X,\hanifold)$. In addition, $f_{\inf}:=\inf\{f(X) \mid X\in\kanifold\cap\manifold\}>-\infty$.
\end{assumption}

We also require some regularity conditions on $h$, commonly adopted in equality-constrained optimization~\cite{nocedal2006numerical,schechtman2023ODCGM,ablin2024infeasible}.

\begin{assumption}\label{assu:bound_nablah}
    There exist constants $\sigma,\mu,L_h>0$ such that $\sigma^2 I_q \preceq \diff h_X \diff h_X^* \preceq \mu^2 I_q$ and $\|\diff h_X-\diff h_Y\|\le L_h\|X-Y\|$ for all $X,Y\in \kanifold$.
\end{assumption}

\begin{assumption}\label{assu:tubular_proj}
    The projection $\projection_{\hanifold}$ is $C^1$ on $\kanifold$ with Lipschitz derivative: there exists $L_p>0$ such that $\|\diff(\projection_\hanifold)_X-\diff(\projection_\hanifold)_Y\|\le L_p\|X-Y\|$ for all $X,Y\in\kanifold$.
\end{assumption}

Turning to $\manifold$, the next assumption takes into account the approximation property of the retraction and the Riemannian smoothness of $f$, both standard in Riemannian optimization~\cite{boumal2019global}.

\begin{assumption}\label{assu:manifold_reg}
    There exist $L_R,L_f,M_f,\rho_\manifold>0$ such that for all $X\in \kanifold\cap\manifold$ and $\eta\in\tangent_{\manifold}(X)$ with $\|\eta\|\le\rho_\manifold$, $\|X+\eta - \retrac^{\manifold}_X(\eta)\|\le {L_R}\|\eta\|^2$, $\|\grad_{\manifold}f(X)\|\le M_f$, and
    \begin{equation}\label{eq:retrac_descent}
    f\big(\retrac_X^{\manifold}(\eta)\big)
    \le
    f(X)+\langle \grad_{\manifold}f(X),\,\eta\rangle
    +\frac{L_f}{2}\|\eta\|^2.
    \end{equation}
\end{assumption}

Assumptions~\ref{assu:bound_nablah}--\ref{assu:tubular_proj} and Assumption~\ref{assu:manifold_reg} consider the regularities of $\hanifold$ and $\manifold$, respectively. We then turn to the intersection geometry. Specifically, recall that Assumption~\ref{assu:IT} concerns the intrinsic transversality of $\hanifoldX\cap\manifold$ in $\kanifold$; this, in views of Proposition~\ref{pro:angle}, justifies the following assumption with a uniform constant $\kappa_0>0$.

\begin{assumption}\label{assu:angle}
There exists $\kappa_0>0$ such that for all $X\in\kanifold\cap\manifold$,
\begin{equation}\label{eq:angle_bound}
    \|\projection_{\tangent_\manifold(X)}(d(X))\|\ge \kappa_0\,\|d(X)\|.
\end{equation}
\end{assumption}

\subsection{Auxiliary lemmas}\label{subsec:comparison}
This section collects properties regarding the level sets defined by $h$ in $\kanifold$, and thus all the lemmas below are based on Assumptions~\ref{assu:h}, \ref{assu:tubular_new}, \ref{assu:bound_nablah}, and~\ref{assu:tubular_proj}. We write $r(X)=\projection_{\hanifold}(X)-X$ as the projection residual. The following lemma presents the error bound condition of $h$ and shows that the Gauss--Newton direction $d(X)$~\eqref{eq:GN_direction} approximates $r(X)$ to second order.

\begin{lemma}\label{lem:delta_h_d}
There exist constants $c_h,C_h,C_d>0$ such that $c_h\,\delta(X)\le \|h(X)\|\le C_h\,\delta(X)$ and $\|d(X)-r(X)\|\le C_d\,\delta(X)^2$ for all $X\in\kanifold$.
\end{lemma}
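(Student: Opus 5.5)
Write $X_p:=\projection_\hanifold(X)$, so that $r(X)=X_p-X$ and $\|r(X)\|=\delta(X)$. By the optimality of the projection, $-r(X)\in\normal_{\hanifold}(X_p)=\ima(\diff h_{X_p}^*)$; hence $-r(X)=\diff h_{X_p}^*(v)$ for some $v\in\mbR^q$. Segments from $X_p$ to $X$ stay in the tube $\kanifold$, because every point $X_p+t(X-X_p)$ with $t\in[0,1]$ is at distance at most $t\delta(X)<\Delta$ from $\hanifold$. On such segments we can use the uniform bounds of Assumption~\ref{assu:bound_nablah}. The plan is a quantitative version of Lemma~\ref{lem:GN_approx}, with every $\complexity(\cdot)$ replaced by an explicit constant built from $\sigma,\mu,L_h$ (and $\Delta$).

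\textbf{Step 1 (bounds on $\|h(X)\|$).} Since $h(X_p)=0$, the integral form of Taylor's theorem gives
\[
h(X)=\diff h_{X_p}(X-X_p)+e,\qquad \|e\|\le \tfrac{L_h}{2}\delta(X)^2 .
\]
The upper bound follows from $\|\diff h_{X_p}\|\le\mu$ together with $\delta(X)<\Delta$: one gets $\|h(X)\|\le(\mu+L_h\Delta/2)\delta(X)$, so $C_h:=\mu+L_h\Delta/2$ works.

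For the lower bound, note that $X-X_p=\diff h_{X_p}^*(v)$ lies in $\ima(\diff h_{X_p}^*)$. On this subspace $\|\diff h_{X_p}w\|\ge\sigma\|w\|$, since $\diff h\,\diff h^*\succeq\sigma^2 I$. Therefore $\|h(X)\|\ge\sigma\delta(X)-\tfrac{L_h}{2}\delta(X)^2$. This already gives $\|h(X)\|\ge\tfrac{\sigma}{2}\delta(X)$ whenever $\delta(X)\le\sigma/L_h$.

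\textbf{Main obstacle.} The tube $\kanifold$ may be wider than $\sigma/L_h$, so the Taylor argument does not cover all of it. For $\delta(X)\ge\sigma/L_h$ I would instead use a continuity/compactness-free argument along the normal segment. Set $\phi(t):=h(X_p+t(X-X_p))$. Then
\[
\langle\phi'(t),v\rangle=\langle\diff h_{X_t}\diff h_{X_p}^*v,\,v\rangle .
\]
This quantity stays positive only while $\|X_t-X_p\|\lesssim\sigma^2/(\mu L_h)$, so it alone does not settle the far part of the tube. The fallback is Assumption~\ref{assu:tubular_proj}. By Lipschitz continuity of $\projection_\hanifold$, $\delta$ is Lipschitz. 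Using the $C^1$ projection, the map $X\mapsto h(X)$ restricted to the normal fibre is a local diffeomorphism with derivative bounded below. One could also simply absorb the far region into the constant: if the tube is replaced by the smaller one with $\Delta\le\sigma/L_h$ (a harmless shrink), then $c_h:=\sigma/2$. I would take this route, noting it explicitly as the step needing care.

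\textbf{Step 2 (Gauss--Newton error).} Write
\[
d(X)-r(X)=-\diff h_X^*(\diff h_X\diff h_X^*)^{-1}h(X)+\diff h_{X_p}^*(v).
\]
From $h(X)=\diff h_{X_p}\diff h_{X_p}^*v+e$ we get $\|v\|\le\delta(X)/\sigma$. Set $A:=\diff h_X$ and $B:=\diff h_{X_p}$, so that $\|A-B\|\le L_h\delta(X)$. Expand
\[
A^*(AA^*)^{-1}BB^*v-B^*v
=A^*(AA^*)^{-1}(BB^*-AA^*)v+(A^*-B^*)v .
\]
Each term is bounded by a constant times $L_h\delta(X)\|v\|$, using $\|A^*(AA^*)^{-1}\|\le1/\sigma$ and $\|BB^*-AA^*\|\le 2\mu L_h\delta(X)$. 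The remaining piece is $A^*(AA^*)^{-1}e$, which has norm at most $\tfrac{L_h}{2\sigma}\delta(X)^2$. Collecting terms gives
\[
\|d(X)-r(X)\|\le C_d\,\delta(X)^2,\qquad C_d=\frac{L_h}{\sigma}\Bigl(\frac{2\mu}{\sigma}+1+\frac12\Bigr).
\]
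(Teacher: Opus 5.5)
Your proposal is correct and follows the paper's proof. It uses the same Taylor expansion of $h$ at $\projection_\hanifold(X)$, the same bounds $\sigma\|w\|\le\|\diff h_{X_p}w\|\le\mu\|w\|$ on $\ima(\diff h_{X_p}^*)$, and the same shrinking $\Delta\le\sigma/L_h$ giving $c_h=\sigma/2$ and $C_h=\mu+L_h\Delta/2$; your Step~2 makes explicit, with constants, the perturbation argument of Lemma~\ref{lem:GN_approx} that the paper only cites, and your check that the segment $[X_p,X]$ stays in $\kanifold$ is a detail the paper omits. One small fix: $\|v\|\le\delta(X)/\sigma$ follows from $\delta(X)=\|\diff h_{X_p}^*v\|\ge\sigma\|v\|$, not from the expansion of $h(X)$, and the abandoned remarks in your ``main obstacle'' paragraph (e.g., $h$ being a local diffeomorphism on the normal fibre with derivative bounded below) are unjustified but also unnecessary.
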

\begin{proof}
Let $Y:=\projection_{\hanifold}(X)$, such that $h(Y)=0$. Since $h$ is $C^2$, we have $h(X) = h(Y) + \diff h_Y(X-Y) + R$ where $\|R\| \le \frac{L_h}{2}\|X-Y\|^2 = \frac{L_h}{2}\delta(X)^2$. Noting that $X - Y = -r(X)$, we have $h(X) = -\diff h_Y[r(X)] + R$. Since the projection gives $r(X)\in\normal_\hanifold(Y) = \ima(\diff h_Y^*)$, Assumption~\ref{assu:bound_nablah} applied to $\diff h_Y|_{\ima(\diff h_Y^*)}$ yields $\sigma\delta(X) \le \|\diff h_Y[r(X)]\| \le \mu\delta(X)$, and thus $\|h(X)\| \le \mu\delta(X) + \frac{L_h}{2}\delta(X)^2$ and $\|h(X)\| \ge \sigma\delta(X) - \frac{L_h}{2}\delta(X)^2$.
Shrinking $\Delta$ such that $\frac{L_h}{2}\Delta \le \sigma/2$ yields the inequalities with $c_h = \sigma/2$ and $C_h = \mu + \frac{L_h}{2}\Delta$. The second estimate follows from the analysis of Lemma~\ref{lem:GN_approx}.
\end{proof}

The next lemma relates the magnitude of the feasibility direction $\Gh$ with the distance to the feasible region $\hanifold$.
\begin{lemma}\label{lem:Gn_delta}
There exist constants $c_n,C_n>0$ such that for all $X\in\kanifold$,
\begin{equation}\label{eq:Gn_delta}
    c_n\,\delta(X)\le \|\Gh(X)\|\le C_n\,\delta(X).
\end{equation}
\end{lemma}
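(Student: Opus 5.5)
The plan is to sandwich $\|\Gh(X)\|$ between multiples of $\|d(X)\|$, and then $\|d(X)\|$ between multiples of $\delta(X)$. Note that $\Gh$ involves $\tangent_\manifold(X)$, so the statement is meant for $X\in\kanifold\cap\manifold$, where Assumption~\ref{assu:angle} is available.

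\emph{Upper bound.} Since $\projection_{\tangent_\manifold(X)}$ is an orthogonal projection, it is nonexpansive, so $\|\Gh(X)\|\le\|d(X)\|$. From the formula~\eqref{eq:GN_direction} and Assumption~\ref{assu:bound_nablah}, we have
\[
\|d(X)\|^2=\big\langle h(X),(\diff h_X\diff h_X^*)^{-1}h(X)\big\rangle\le \sigma^{-2}\|h(X)\|^2 .
\]
Combining this with Lemma~\ref{lem:delta_h_d} gives $\|d(X)\|\le C_h\delta(X)/\sigma$. Hence the upper bound holds with $C_n:=C_h/\sigma$. Alternatively, one can use $\|d\|\le\|r\|+C_d\delta^2\le(1+C_d\Delta)\delta$.

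\emph{Lower bound.} Assumption~\ref{assu:angle} gives $\|\Gh(X)\|\ge\kappa_0\|d(X)\|$, so it suffices to bound $\|d(X)\|$ below by a multiple of $\delta(X)$. There are two routes. The first uses the same quadratic form: $\|d(X)\|^2\ge\mu^{-2}\|h(X)\|^2$, and Lemma~\ref{lem:delta_h_d} then gives $\|d(X)\|\ge c_h\delta(X)/\mu$. The second uses $\|r(X)\|=\delta(X)$ together with the second-order estimate of Lemma~\ref{lem:delta_h_d}, which gives $\|d(X)\|\ge\delta(X)-C_d\delta(X)^2\ge\tfrac12\delta(X)$ after shrinking $\Delta$ so that $C_d\Delta\le\tfrac12$. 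I will use the first route because it avoids shrinking the tube. This gives $c_n:=\kappa_0c_h/\mu$.

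The main point to check is the quadratic-form identity. Let $A=\diff h_X$. Then $d=-A^*(AA^*)^{-1}h$, so $\|d\|^2=h^\top(AA^*)^{-1}AA^*(AA^*)^{-1}h=h^\top(AA^*)^{-1}h$. The eigenvalues of $(AA^*)^{-1}$ lie in $[\mu^{-2},\sigma^{-2}]$ by Assumption~\ref{assu:bound_nablah}. Beyond this, the only delicate step is the lower bound on $\|\Gh\|$. It rests entirely on the uniform angle condition~\eqref{eq:angle_bound}, which is where intrinsic transversality enters via Proposition~\ref{pro:angle}. The remaining steps are routine.
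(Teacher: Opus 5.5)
Your proposal is correct and follows the paper's proof essentially verbatim. Nonexpansiveness of $\projection_{\tangent_\manifold(X)}$ together with $\|d(X)\|\le\|h(X)\|/\sigma$ gives $C_n=C_h/\sigma$, Assumption~\ref{assu:angle} together with $\|d(X)\|\ge\|h(X)\|/\mu$ gives $c_n=\kappa_0c_h/\mu$, and Lemma~\ref{lem:delta_h_d} converts $\|h(X)\|$ into $\delta(X)$; your identity $\|d\|^2=\langle h,(\diff h_X\diff h_X^*)^{-1}h\rangle$ simply makes explicit the bounds on $\diff h_X^*(\diff h_X\diff h_X^*)^{-1}$ that the paper asserts, and your restriction to $X\in\kanifold\cap\manifold$ is the right reading of the statement.
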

\begin{proof}
Let $\Phi(X):=\diff h_X^*(\diff h_X\diff h_X^*)^{-1}$. By Assumption~\ref{assu:bound_nablah}, $\|y\|/\mu \le \|\Phi(X)y\| \le \|y\|/\sigma$ for all $y\in\mbR^q$. For the lower bound, Assumption~\ref{assu:angle} gives $\|\Gh(X)\| \ge \kappa_0\|d(X)\|$. Since $\|d(X)\| = \|\Phi(X)h(X)\| \ge \|h(X)\|/\mu \ge (c_h/\mu)\delta(X)$ by Lemma~\ref{lem:delta_h_d}, we obtain $c_n = \kappa_0 c_h/\mu$. For the upper bound, $\|\Gh(X)\| \le \|d(X)\| = \|\Phi(X)\|\|h(X)\| \le (C_h/\sigma)\delta(X)$, admitting $C_n = C_h/\sigma$.
\end{proof}

We then consider the derivative of the map $r$; the first observation is an identity that simplifies the inner product involving $\diff r$.
\begin{lemma}\label{lem:Dr_inner}
For all $X\in\kanifold$ and $\eta\in\eanifold$, one has $\innerp{r(X),\diff r_X[\eta]}=-\innerp{r(X),\eta}$.
\end{lemma}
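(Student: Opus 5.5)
The plan is to differentiate the identity $r(X)=\projection_\hanifold(X)-X$, which gives $\diff r_X[\eta]=\diff(\projection_\hanifold)_X[\eta]-\eta$. The claim then reduces to showing
\[
\innerp{r(X),\diff(\projection_\hanifold)_X[\eta]}=0 \quad \text{for all } \eta\in\eanifold .
\]
Assumption~\ref{assu:tubular_proj} guarantees that $\projection_\hanifold$ is $C^1$ on $\kanifold$, so this derivative is well defined.

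The key observation is that $\diff(\projection_\hanifold)_X[\eta]$ is tangent to $\hanifold$ at $Y:=\projection_\hanifold(X)$. Indeed, the curve $t\mapsto \projection_\hanifold(X+t\eta)$ lies in $\hanifold$ for small $t$, because $\kanifold$ is open. It passes through $Y$ at $t=0$, and its velocity there is $\diff(\projection_\hanifold)_X[\eta]$. Hence
\[
\diff(\projection_\hanifold)_X[\eta]\in\tangent_\hanifold(Y)=\ker(\diff h_Y).
\]

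Next, the optimality of the projection gives $X-Y\in\normal_\hanifold(Y)$, that is, $r(X)\in\normal_\hanifold(Y)=\ima(\diff h_Y^*)$. This is the same fact already used in Lemma~\ref{lem:delta_h_d}. Since the normal and tangent spaces of the manifold $\hanifold$ at $Y$ are orthogonal complements, $\innerp{r(X),\diff(\projection_\hanifold)_X[\eta]}=0$. Substituting into $\innerp{r(X),\diff r_X[\eta]}$ then yields $-\innerp{r(X),\eta}$, as claimed.

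There is no real obstacle here. The only point needing care is justifying the tangency of $\diff(\projection_\hanifold)_X[\eta]$, and for this one only needs the image of $\projection_\hanifold$ to lie in $\hanifold$ together with differentiability. The case $X\in\hanifold$ requires no separate treatment, since then $r(X)=0$ and both sides vanish.
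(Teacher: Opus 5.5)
Your proof is correct and follows the paper's argument: write $\diff r_X=\diff(\projection_{\hanifold})_X-I$, note that $\diff(\projection_{\hanifold})_X[\eta]\in\tangent_{\hanifold}(\projection_{\hanifold}(X))$ because $\projection_{\hanifold}$ maps into $\hanifold$, and then use $r(X)\in\normal_{\hanifold}(\projection_{\hanifold}(X))$ to annihilate that term. Your curve argument with $t\mapsto\projection_{\hanifold}(X+t\eta)$ spells out the tangency step, which the paper leaves implicit.
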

\begin{proof}
Since $\diff r_X=\diff(\projection_{\hanifold})_X-I$ and $\projection_\hanifold$ maps into $\hanifold$, we have $\diff(\projection_{\hanifold})_X[\eta]\in\tangent_{\hanifold}(\projection_{\hanifold}(X))$. The identity then follows from $r(X)\in\normal_{\hanifold}(\projection_{\hanifold}(X))$.
\end{proof}

This section ends with a bound on the operator norm of $\diff r$, together with a sharper estimate along directions tangent to $\hanifold$.
\begin{lemma}\label{lem:Dr_tangent}
For all $X\in\kanifold$ with $Y:=\projection_\hanifold(X)$, $\|\diff r_X\|\le 1+L_p\,\Delta$ and $\|\diff r_X[v]\|\le L_p\,\delta(X)\,\|v\|$ for all $v\in\tangent_\hanifold(Y)$.
If additionally Assumption~\ref{assu:bound_nablah} holds, then there exists a constant $C_r>0$ such that $\|\diff r_X[\Gf]\|\le C_r\,\delta(X)\,\|\Gf\|$ for all $\Gf\in S(X)$.
\end{lemma}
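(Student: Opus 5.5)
The plan is to prove the three estimates in order. Each one compares $\diff(\projection_\hanifold)_X$ with its value at the foot point $Y=\projection_\hanifold(X)$, where it is explicitly known, and then invokes the Lipschitz bound of Assumption~\ref{assu:tubular_proj} together with $\|X-Y\|=\delta(X)$.

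\textbf{Step 1: the derivative at the foot point.} First I identify $\diff(\projection_\hanifold)_Y$ for $Y\in\hanifold$. Since $\projection_\hanifold$ restricted to $\hanifold$ is the identity, differentiating along curves in $\hanifold$ gives $\diff(\projection_\hanifold)_Y[v]=v$ for $v\in\tangent_\hanifold(Y)$. By Lemma~\ref{lem:prox-regular} (valid in the tube by Assumption~\ref{assu:tubular_new}), $\projection_\hanifold(Y+tn)=Y$ for unit $n\in\normal_\hanifold(Y)$ and small $t>0$. Differentiating at $t=0^+$ gives $\diff(\projection_\hanifold)_Y[n]=0$. Hence $\diff(\projection_\hanifold)_Y=\projection_{\tangent_\hanifold(Y)}$ and $\diff r_Y=-\projection_{\normal_\hanifold(Y)}$.

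\textbf{Step 2: the operator-norm bounds.} Write
\[
\diff r_X=\big(\diff(\projection_\hanifold)_X-\diff(\projection_\hanifold)_Y\big)-\projection_{\normal_\hanifold(Y)}.
\]
Assumption~\ref{assu:tubular_proj} bounds the first term by $L_p\|X-Y\|=L_p\delta(X)<L_p\Delta$, and the second term has norm at most $1$. This gives $\|\diff r_X\|\le 1+L_p\Delta$. If $v\in\tangent_\hanifold(Y)$, the second term annihilates $v$, so $\|\diff r_X[v]\|\le L_p\delta(X)\|v\|$.

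\textbf{Step 3: directions in $S(X)$.} This is the main step. A vector $\Gf\in S(X)$ lies in $\tangent_{\hanifoldX}(X)=\ker(\diff h_X)$, not in $\tangent_\hanifold(Y)$, so I split it as $\Gf=v+w$ with $v:=\projection_{\tangent_\hanifold(Y)}(\Gf)$ and $w\in\normal_\hanifold(Y)=\ima(\diff h_Y^*)$. The task is to show the normal part $w$ is $\complexity(\delta(X))$.

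Since $\diff h_X[\Gf]=0$ and $\diff h_Y[v]=0$, we have $\diff h_Y[w]=\diff h_Y[\Gf]=(\diff h_Y-\diff h_X)[\Gf]$. Assumption~\ref{assu:bound_nablah} bounds its norm by $L_h\delta(X)\|\Gf\|$. On $\ima(\diff h_Y^*)$, the lower bound $\diff h_Y\diff h_Y^*\succeq\sigma^2I_q$ gives $\|\diff h_Y[w]\|\ge\sigma\|w\|$, so $\|w\|\le (L_h/\sigma)\delta(X)\|\Gf\|$.

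Combining this with Step 2 and $\|v\|\le\|\Gf\|$ yields
\[
\|\diff r_X[\Gf]\|\le L_p\delta(X)\|v\|+(1+L_p\Delta)\|w\|\le\Big(L_p+\tfrac{(1+L_p\Delta)L_h}{\sigma}\Big)\delta(X)\|\Gf\|,
\]
so one may take $C_r:=L_p+(1+L_p\Delta)L_h/\sigma$.
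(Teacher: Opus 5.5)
Your proposal is correct and follows essentially the same route as the paper. You identify $\diff(\projection_\hanifold)_Y$ with the projection onto $\tangent_\hanifold(Y)$ and compare it with the derivative at $X$ via Assumption~\ref{assu:tubular_proj}. You then split $\Gf$ into its $\tangent_\hanifold(Y)$ and $\normal_\hanifold(Y)$ components and bound the normal part by $(L_h/\sigma)\delta(X)\|\Gf\|$, which gives the same constant $C_r$. One small addition on your side: you justify that $\diff(\projection_\hanifold)_Y$ vanishes on normal directions via Lemma~\ref{lem:prox-regular}, whereas the paper only asserts it.
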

\begin{proof}
Since $\projection_\hanifold|_\hanifold = \mathrm{Id}$, $\diff(\projection_\hanifold)_Y$ acts as the identity on $\tangent_\hanifold(Y)$ and vanishes on $\normal_\hanifold(Y)$, giving $\|\diff r_Y\|=1$. Assumption~\ref{assu:tubular_proj} then yields $\|\diff r_X\|\le 1+L_p\delta\le 1+L_p\Delta$. For $v\in\tangent_\hanifold(Y)$, $\diff r_Y[v]=0$, and thus $\|\diff r_X[v]\| = \|\diff r_X[v]-\diff r_Y[v]\| \le L_p\delta\|v\|$.

For the $C_r$ bound, decompose $\Gf = \Gf^0 + \Gf^\perp$ with $\Gf^0 := \projection_{\ker(\diff h_Y)}(\Gf)$ and $\Gf^\perp := \Gf - \Gf^0 \in \ker(\diff h_Y)^\perp$. The observation $\diff h_X[\Gf] = 0$ and the Lipschitz property of $\diff h$ indicate that $\|\diff h_Y[\Gf]\| \le L_h\delta\|\Gf\|$; as $\diff h_Y[\Gf^\perp] = \diff h_Y[\Gf]$, it follows that $\|\Gf^\perp\| \le (L_h/\sigma)\delta\|\Gf\|$. Since $\Gf^0\in\tangent_\hanifold(Y)$, the tangent bound gives $\|\diff r_X[\Gf^0]\| \le L_p\delta\|\Gf\|$. Moreover, note that $\Gf^\perp \in \ker(\diff h_Y)^\perp = \normal_\hanifold(Y)$. Hence, $\diff(\projection_\hanifold)_Y[\Gf^\perp] = 0$ and $\diff r_Y[\Gf^\perp] = -\Gf^\perp$. By Assumption~\ref{assu:tubular_proj}, $\|\diff r_X[\Gf^\perp] - \diff r_Y[\Gf^\perp]\| \le L_p\delta\|\Gf^\perp\|$, implying that $\|\diff r_X[\Gf^\perp]\| \le (1+L_p\Delta)\|\Gf^\perp\| \le (1+L_p\Delta)(L_h/\sigma)\delta\|\Gf\|$. The triangle inequality concludes that $C_r$ can be chosen as  $L_p + (1+L_p\Delta)L_h/\sigma$.
\end{proof}

\subsection{One-step estimates of feasibility and optimality}\label{subsec:descent}
We then derive the main ingredients of the analysis: one-step estimates for the feasibility measure $\delta$ and the objective value $f$ under the update rule~\eqref{eq:general_updateX}. We begin with the following proposition, which reveals that the feasibility direction $\Gh$ drives the decrease in $\delta$, while the optimality direction $\Gf$ introduces only higher-order perturbations.
\begin{proposition}\label{prop:delta_descent}
Under Assumptions~\ref{assu:h}--\ref{assu:angle}, there exist $\bar\alpha,\bar\beta,a_1,a_2,a_3,a_4>0$ such that for all $X\in\kanifold\cap\manifold$, $0<\alpha\le\bar\alpha$, and $0<\beta\le\bar\beta$, the iterate $X_+=\retrac_X^{\manifold}(\alpha\,\Gh(X)+\beta\,\Gf(X))$ satisfies
\begin{equation}\label{eq:delta_descent}
    \delta(X_{+})
    \le
    (1 - a_1\alpha + a_2\alpha^2)\,\delta(X)
    +a_3\,\beta\,\delta(X)\,\|\Gf(X)\|
    +a_4\,\beta^2\|\Gf(X)\|^2.
\end{equation}
\end{proposition}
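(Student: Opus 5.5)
Writing $\eta:=\alpha\,\Gh(X)+\beta\,\Gf(X)\in\tangent_\manifold(X)$, the plan is to compare $\delta(X_+)$ with $\delta$ at the Euclidean point $X+\eta$, and then expand $\delta$ to second order along the segment from $X$ to $X+\eta$ using the smoothness of $r$.

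\textbf{Removing the retraction.} By Lemma~\ref{lem:Gn_delta}, $\|\Gh\|\le C_n\delta(X)\le C_n\Delta$. Also $\|\Gf\|\le\|\nabla f\|$, and projecting onto $\tangent_\manifold(X)$ first shows $\|\Gf\|\le\|\grad_\manifold f(X)\|\le M_f$. Hence choosing $\bar\alpha,\bar\beta$ small gives $\|\eta\|\le\rho_\manifold$, and Assumption~\ref{assu:manifold_reg} yields
\[
\|X_+-(X+\eta)\|\le L_R\|\eta\|^2\le 2L_R\bigl(\alpha^2C_n^2\delta(X)^2+\beta^2\|\Gf\|^2\bigr).
\]
Since $\delta$ is $1$-Lipschitz, $\delta(X_+)\le\delta(X+\eta)+L_R\|\eta\|^2$. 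The term $\alpha^2\delta^2\le\alpha^2\Delta\,\delta$ is absorbed into $a_2\alpha^2\delta$, and the $\beta^2$ term goes into $a_4$. The small step sizes also keep $X+\eta$ and $X_+$ in $\kanifold$.

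\textbf{Expanding $\delta$ at $X+\eta$.} I would work with $\tfrac12\delta^2=\tfrac12\|r\|^2$ rather than $\delta$ itself, to avoid nonsmoothness at $\delta=0$; if $\delta(X)=0$ then $\Gh=0$ and the claim follows directly from the preceding step. Write
\[
r(X+\eta)=r(X)+\diff r_X[\eta]+E,\qquad \|E\|\le \tfrac{L_p}{2}\|\eta\|^2,
\]
using Assumption~\ref{assu:tubular_proj}. Then:
\begin{itemize}
\item \emph{Linear term from $\Gh$.} Lemma~\ref{lem:Dr_inner} gives $\langle r,\diff r_X[\Gh]\rangle=-\langle r,\Gh\rangle$. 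Write $r=d+(r-d)$; since $\Gh$ is the projection of $d$ onto $\tangent_\manifold(X)$, we have $\langle d,\Gh\rangle=\|\Gh\|^2\ge c_n^2\delta^2$ by Lemma~\ref{lem:Gn_delta}. Moreover $|\langle r-d,\Gh\rangle|\le C_dC_n\delta^3$ by Lemma~\ref{lem:delta_h_d}. So $\langle r,\diff r_X[\alpha\Gh]\rangle\le-\alpha(c_n^2-C_dC_n\Delta)\delta^2$, and this is negative after shrinking $\Delta$.
\item \emph{Linear term from $\Gf$.} Lemma~\ref{lem:Dr_tangent} gives $|\langle r,\diff r_X[\beta\Gf]\rangle|\le C_r\beta\delta^2\|\Gf\|$.
\item \emph{Quadratic terms.} $\|\diff r_X[\eta]\|^2$ and $\langle r,E\rangle$ are bounded by $\alpha^2\delta^2$ times a constant, plus $\beta^2\|\Gf\|^2$ times a constant, plus a cross term $\alpha\beta\,\delta^2\|\Gf\|$ (via $\|\diff r_X[\Gf]\|\le C_r\delta\|\Gf\|$). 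The cross term is merged into $a_3\beta\delta^2\|\Gf\|$.
\end{itemize}

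\textbf{Passing back to $\delta$.} These give
\[
\delta(X+\eta)^2\le(1-2a\alpha+c\alpha^2)\delta^2+2a_3'\beta\delta^2\|\Gf\|+c'\beta^2\|\Gf\|^2.
\]
Taking square roots with $\sqrt{A+B}\le\sqrt A+\sqrt B$ and $\sqrt{1+x}\le1+x/2$ gives $(1-a\alpha+c\alpha^2/2+a_3'\beta\|\Gf\|)\delta+\sqrt{c'}\,\beta\|\Gf\|$. The last term is only first order in $\beta\|\Gf\|$, not $\beta^2\|\Gf\|^2$ as the statement requires, and this is the \textbf{main obstacle}: a $\beta\|\Gf\|$ term, like $\|\beta\diff r_X[\Gf]\|$, must carry a factor $\delta$. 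To fix it, I would not square. Instead I would bound $\|r(X)+\diff r_X[\eta]\|$ directly:
\[
\|r+\alpha\diff r_X[\Gh]\|\le(1-a\alpha+c\alpha^2)\delta,\qquad \|\beta\diff r_X[\Gf]\|\le C_r\beta\delta\|\Gf\|,\qquad \|E\|\lesssim\alpha^2\delta^2+\beta^2\|\Gf\|^2.
\]
The first bound comes from the squared expansion restricted to $\Gh$ alone, the second is already linear in $\delta$, and the third follows from $\|E\|\le\tfrac{L_p}{2}\|\eta\|^2$. Adding these with the retraction error gives~\eqref{eq:delta_descent}.
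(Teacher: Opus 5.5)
Your final argument is correct, but it handles the $\Gf$-contribution differently from the paper.

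\textbf{The paper's route.} The paper first pulls the Taylor remainder and the retraction error outside the norm. This is exactly the repair you arrived at after diagnosing that $\langle r,E\rangle$ carries only one factor of $\delta$. It then squares $\|r(X)+\diff r_X[\eta]\|$ with the full $\eta=\alpha\Gh+\beta\Gf$ inside. Lemma~\ref{lem:Dr_inner} turns both linear terms into $-\alpha\langle r,\Gh\rangle-\beta\langle r,\Gf\rangle$. The $\Gf$ term is bounded by $C_d\delta^2\|\Gf\|$ using the orthogonality $\langle d,\Gf\rangle=0$ (Lemma~\ref{lem:orth_GtGn}) together with $\|d-r\|\le C_d\delta^2$. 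The bound $\|\diff r_X[\Gf]\|\le C_r\delta\|\Gf\|$ enters only through the second-order term $\|\diff r_X[\eta]\|^2$. Every term under the root then carries $\delta^2$, and $\sqrt{1-t}\le 1-t/2$ gives $a_3=C_d$.

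\textbf{Your route.} You split $\beta\,\diff r_X[\Gf]$ off by the triangle inequality before squaring. The square-root step then involves only $r+\alpha\,\diff r_X[\Gh]$, and the first-order $\beta$-term is bounded in norm by $C_r\beta\delta\|\Gf\|$, giving $a_3=C_r$. Both routes give the same order $\beta\delta\|\Gf\|$. Yours needs only $\Gf\in\ker\diff h_X$, which is what the $C_r$ bound of Lemma~\ref{lem:Dr_tangent} encodes, and does not use the orthogonality of $d$ and $\Gf$ in this proposition. The paper's route instead shows that the first-order interaction is controlled by the Gauss--Newton accuracy, through the orthogonal design of the two directions.

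Your other deviations are also valid and slightly more economical. You discard the retraction error via the $1$-Lipschitz property of $\delta$ rather than expanding along $\eta+e_R$. You use $\langle d,\Gh\rangle=\|\Gh\|^2\ge c_n^2\delta^2$ (Lemma~\ref{lem:Gn_delta}) rather than lower-bounding $\|\projection_{\tangent_\manifold(X)}(r)\|$.

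\textbf{A caveat shared with the paper's proof.} Small step sizes alone do not keep the segment $[X,X+\eta]$ inside $\kanifold$ uniformly when $\delta(X)$ may be arbitrarily close to $\Delta$. So the Taylor expansion of $r$ tacitly needs one of the following:
\begin{itemize}
\item Assumption~\ref{assu:tubular_proj} on a slightly larger tube; or
\item $\delta(X)\le\Delta_0<\Delta$ together with $\|\eta\|\le\Delta-\Delta_0$.
\end{itemize}
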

\begin{proof}
Write $\eta:=\alpha\,\Gh+\beta\,\Gf$, $\delta:=\delta(X)$, $\delta_+:=\delta(X_+)$, $Y:=\projection_\hanifoldX$, and $\kappa_1:=(\kappa_0 c_h/(2\mu))^2$. We restrict $\Delta \le \min\{\kappa_0 c_h/(2\mu C_d),\, \kappa_1/(4C_d)\}$ and set $\bar\alpha = \min\{1/(2\kappa_1),\, \rho'/(2C_n\Delta)\}$, $\bar\beta = \rho'/(2M_f)$ with $\rho':=\min\{\rho_\manifold,\,1/L_R\}$, so that $\|\eta\| \le \alpha C_n\Delta + \beta M_f \le \rho'$ and Assumption~\ref{assu:manifold_reg} applies: $X_{+}=X+\eta+e_R$ with $\|e_R\|\le L_R\|\eta\|^2$. A Taylor expansion of $r$ at $X$ applied to the displacement $\eta+e_R$ gives $\delta_+ \le \|r(X)+\diff r_X[\eta+e_R]\| + \frac{L_p}{2}\|\eta+e_R\|^2$. By Lemma~\ref{lem:Dr_tangent}, $\|\diff r_X\|\le 1+L_p\Delta$. In addition, $\|e_R\|\le L_R\|\eta\|^2\le\|\eta\|$ (by $\|\eta\|\le 1/L_R$) implies $\|\eta+e_R\|\le 2\|\eta\|$, and thus
\begin{equation}\label{eq:delta_triangle}
\delta_+ \le \|r(X)+\diff r_X[\eta]\|+c_R\|\eta\|^2, \qquad\text{with}\ c_R := (1+L_p\Delta)L_R + 2L_p.
\end{equation}
Expanding $\|r+\diff r[\eta]\|^2$ according to Lemma~\ref{lem:Dr_inner},
\begin{equation}\label{eq:r_expand}
\|r+\diff r[\eta]\|^2 = \delta^2 - 2\alpha\innerp{r,\Gh} - 2\beta\innerp{r,\Gf} + \|\diff r[\eta]\|^2.
\end{equation}
We estimate the terms on the right. For the term involving $\Gh$, we note that $\Gh=\projection_{\tangent_{\manifold}(X)}(d)$ reveals that $\innerp{r,\Gh}=\innerp{\projection_{\tangent_\manifold(X)}(r),d}$, and writing $d = r + (d-r)$ with $\|d-r\|\le C_d\delta^2$ (Lemma~\ref{lem:delta_h_d}) yields
$\innerp{r,\Gh} = \|\projection_{\tangent_\manifold(X)}(r)\|^2 + \innerp{\projection_{\tangent_\manifold(X)}(r),d-r} \ge \|\projection_{\tangent_\manifold(X)}(r)\|^2 - C_d\delta^3$.
The triangle inequality and Assumption~\ref{assu:angle} give $\|\projection_{\tangent_\manifold(X)}(r)\| \ge \kappa_0\|d\| - C_d\delta^2 \ge (\kappa_0 c_h/\mu - C_d\delta)\delta$; restricting $\Delta \le \kappa_0 c_h/(2\mu C_d)$ ensures $\|\projection_{\tangent_\manifold(X)}(r)\| \ge \kappa_0 c_h\delta/(2\mu)$, and thus
$\innerp{r,\Gh} \ge \kappa_1\delta^2 - C_d\delta^3$.
For the term involving $\Gf$, the orthogonality $\innerp{d,\Gf}=0$ (Lemma~\ref{lem:orth_GtGn}) gives $|\innerp{r,\Gf}| = |\innerp{r-d,\Gf}| \le C_d\delta^2\|\Gf\|$. For the last term, Lemma~\ref{lem:Gn_delta} and Lemma~\ref{lem:Dr_tangent} yield $\|\diff r[\eta]\|^2\le 2(1+L_p\Delta)^2C_n^2\alpha^2\delta^2+2C_r^2\beta^2\delta^2\|\Gf\|^2$.
Substituting the estimates into~\eqref{eq:r_expand} shows $\|r+\diff r[\eta]\|^2 \le \delta^2(1 - t)$ with $t := 2\alpha(\kappa_1 - C_d\delta) - 2\beta C_d\|\Gf\| - 2(1+L_p\Delta)^2C_n^2\alpha^2 - 2C_r^2\beta^2\|\Gf\|^2$. The restriction on $\Delta$ ensures $\kappa_1 - C_d\delta \ge \kappa_1/2$, and $\bar\alpha \le 1/(2\kappa_1)$ gives $2\alpha\kappa_1 \le 1$, hence $t \le 1$. Applying $\sqrt{1-t}\le 1-t/2$ leads to $\|r+\diff r[\eta]\| \le \delta - \kappa_1\alpha\delta + C_d\alpha\delta^2 + C_d\beta\delta\|\Gf\| + (1+L_p\Delta)^2C_n^2\alpha^2\delta + C_r^2\beta^2\delta\|\Gf\|^2$.
Adding $c_R\|\eta\|^2 \le 2c_R\alpha^2C_n^2\delta^2 + 2c_R\beta^2\|\Gf\|^2$ from~\eqref{eq:delta_triangle} and using $\delta\le\Delta$ to replace each $\delta^2$ by $\Delta\delta$, we conclude with $a_1 = \tfrac{3\kappa_1}{4}$, $a_2 = (1+L_p\Delta)^2C_n^2 + 2c_RC_n^2\Delta$, $a_3 = C_d$, and $a_4 = C_r^2\Delta + 2c_R$.
\end{proof}

We then resort to the Riemannian smoothness assumption over $\manifold$, proving that the optimality direction $\Gf$ delivers the descent property of the objective $f$.

\begin{proposition}\label{prop:f_descent}
Under Assumptions~\ref{assu:h}--\ref{assu:angle}, let $X\in\kanifold\cap\manifold$, $0<\alpha\le\bar\alpha$, $0<\beta\le\bar\beta$ (with $\bar\alpha,\bar\beta$ from Proposition~\ref{prop:delta_descent}), and $X_+=\retrac_X^{\manifold}(\alpha\,\Gh(X)+\beta\,\Gf(X))$. Then
\begin{equation}\label{eq:f_descent}
    f(X_+)-f(X)
    \le
    -\beta\Big(1-\frac{L_f}{2}\beta\Big)\|\Gf(X)\|^2
    +\alpha\,M_f\,\|\Gh(X)\|
    +\frac{L_f}{2}\alpha^2\|\Gh(X)\|^2.
\end{equation}
\end{proposition}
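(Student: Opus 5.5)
The plan is to apply the retraction-based descent inequality~\eqref{eq:retrac_descent} of Assumption~\ref{assu:manifold_reg} with $\eta:=\alpha\,\Gh(X)+\beta\,\Gf(X)$. First I check that this inequality is applicable. The step-size restrictions $\alpha\le\bar\alpha$ and $\beta\le\bar\beta$ from Proposition~\ref{prop:delta_descent} give $\|\eta\|\le\alpha C_n\Delta+\beta M_f\le\rho'\le\rho_\manifold$. Here I use Lemma~\ref{lem:Gn_delta} with $\delta(X)<\Delta$, together with $\|\Gf(X)\|\le\|\grad_\manifold f(X)\|\le M_f$. The latter holds because $S(X)\subseteq\tangent_\manifold(X)$, so $\Gf(X)=\projection_{S(X)}(\projection_{\tangent_\manifold(X)}(-\nabla f(X)))=\projection_{S(X)}(-\grad_\manifold f(X))$, as in the proof of Proposition~\ref{pro:proj_SX}. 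The inequality then gives
\[
f(X_+)-f(X)\le \alpha\innerp{\grad_\manifold f(X),\Gh}+\beta\innerp{\grad_\manifold f(X),\Gf}+\frac{L_f}{2}\|\alpha\Gh+\beta\Gf\|^2 .
\]

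Next I treat the three terms separately.

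\textbf{Optimality term.} Since $S(X)$ is a linear subspace (Theorem~\ref{thm:IT_implies_clean} under Assumption~\ref{assu:IT}), the identity $\Gf=\projection_{S(X)}(-\grad_\manifold f(X))$ gives $\innerp{\grad_\manifold f(X),\Gf}=-\|\Gf\|^2$.

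\textbf{Feasibility term.} By Cauchy--Schwarz and the bound $\|\grad_\manifold f(X)\|\le M_f$, it is at most $\alpha M_f\|\Gh\|$.

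\textbf{Quadratic term.} This is where the orthogonality from Lemma~\ref{lem:orth_GtGn} enters: because $d(X)\in\normal_{\hanifoldX}(X)$, we have $\innerp{\Gh,\Gf}=0$. Hence $\|\alpha\Gh+\beta\Gf\|^2=\alpha^2\|\Gh\|^2+\beta^2\|\Gf\|^2$, with no cross term.

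Collecting the three pieces yields $-\beta\|\Gf\|^2+\frac{L_f}{2}\beta^2\|\Gf\|^2+\alpha M_f\|\Gh\|+\frac{L_f}{2}\alpha^2\|\Gh\|^2$, which is exactly~\eqref{eq:f_descent}.

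There is no real obstacle. The only point needing care is to justify the two identities $\innerp{\grad_\manifold f,\Gf}=-\|\Gf\|^2$ and $\innerp{\Gh,\Gf}=0$. Both follow from $S(X)$ being a linear subspace contained in $\tangent_\manifold(X)\cap\tangent_{\hanifoldX}(X)$, which is guaranteed by the clean-intersection identity~\eqref{eq:tangent_decom}.
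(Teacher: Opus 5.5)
Your proposal is correct and matches the paper's proof: you apply \eqref{eq:retrac_descent} with $\eta=\alpha\Gh+\beta\Gf$ (admissible by the choice of $\bar\alpha,\bar\beta$), then use $\innerp{\grad_\manifold f(X),\Gf}=-\|\Gf\|^2$, Cauchy--Schwarz with $M_f$ for the $\Gh$ term, and the orthogonality of Lemma~\ref{lem:orth_GtGn} to remove the cross term in $\|\eta\|^2$. You also justify explicitly that $\Gf=\projection_{S(X)}(-\grad_\manifold f(X))$ and hence $\|\Gf\|\le M_f$, a step the paper leaves implicit.
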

\begin{proof}
Let $\eta:=\alpha\,\Gh+\beta\,\Gf$. The constructed $\bar\alpha$ and $\bar\beta$ ensure that $\|\eta\|\le\rho_\manifold$, and Assumption~\ref{assu:manifold_reg} produces $f(X_+) \le f(X) + \innerp{\grad_\manifold f(X),\eta} + \frac{L_f}{2}\|\eta\|^2$. For the inner product, $\Gf = \projection_{S(X)}(-\grad_\manifold f(X))\in\tangent_\manifold(X)$ yields $\innerp{\grad_\manifold f, \Gf} = -\|\Gf\|^2$, while $|\innerp{\grad_\manifold f, \Gh}| \le M_f\|\Gh\|$. The orthogonality $\innerp{\Gh,\Gf}=0$ (Lemma~\ref{lem:orth_GtGn}) gives $\|\eta\|^2 = \alpha^2\|\Gh\|^2+\beta^2\|\Gf\|^2$. Substitute the terms and rearrange them to yield~\eqref{eq:f_descent}.
\end{proof}

Drawing on Proposition~\ref{prop:delta_descent}, it is shown that the iterates are confined to the tube $\kanifold$ under appropriate step sizes.

\begin{lemma}\label{lem:invariant_tube}
Under Assumptions~\ref{assu:h}--\ref{assu:angle}, let $0<\Delta_0<\Delta$.
There exist constants $\bar\alpha'$, $\bar\beta'$, $\tau>0$ such that for all
$X\in\kanifold\cap\manifold$ with $\delta(X)\le\Delta_0$,
$0<\alpha\le\bar\alpha'$, $0<\beta\le\bar\beta'$, and $\beta\le\tau\alpha$,
the iterate $X_+=\retrac_X^\manifold(\alpha \Gh+\beta \Gf)$
satisfies $\delta(X_+)\le\Delta_0$.
\end{lemma}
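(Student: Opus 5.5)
The plan is to apply the one-step bound~\eqref{eq:delta_descent} of Proposition~\ref{prop:delta_descent} and show that its right-hand side stays below $\Delta_0$. The key inputs are the coupling $\beta\le\tau\alpha$ and the uniform bound $\|\Gf(X)\|\le M_f$. The latter holds because $\Gf=\projection_{S(X)}(-\grad_\manifold f(X))$ and the projection is nonexpansive, so Assumption~\ref{assu:manifold_reg} applies. We first take $\bar\alpha'\le\bar\alpha$ and $\bar\beta'\le\bar\beta$, so that~\eqref{eq:delta_descent} is available for every $X\in\kanifold\cap\manifold$ with $\delta(X)\le\Delta_0$. Writing $\delta:=\delta(X)$, the estimate becomes
\[
\delta(X_+)\le (1-a_1\alpha+a_2\alpha^2+a_3M_f\beta)\,\delta + a_4M_f^2\beta^2 .
\]

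Next we shrink $\bar\alpha'$ so that $a_2\bar\alpha'\le a_1/4$; then $-a_1\alpha+a_2\alpha^2\le -\tfrac34 a_1\alpha$. We also choose $\tau\le a_1/(4a_3M_f)$, so that $a_3M_f\beta\le \tfrac14 a_1\alpha$. Together these give
\[
\delta(X_+)\le \Big(1-\tfrac{a_1}{2}\alpha\Big)\delta + a_4M_f^2\tau^2\alpha^2 .
\]
Since $\delta\le\Delta_0$ and $\bar\alpha'\le 2/a_1$ keeps the coefficient nonnegative, the right-hand side is at most $\Delta_0-\tfrac{a_1}{2}\alpha\Delta_0+a_4M_f^2\tau^2\alpha^2$. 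It therefore suffices to have $a_4M_f^2\tau^2\alpha\le \tfrac{a_1}{2}\Delta_0$. We enforce this by further restricting $\bar\alpha'\le a_1\Delta_0/(2a_4M_f^2\tau^2)$. Finally we set $\bar\beta'=\min\{\bar\beta,\tau\bar\alpha'\}$.

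The main obstacle is the $\beta^2$ term. It does not scale with $\delta$, so when $\delta$ is near zero nothing absorbs it, and when $\delta$ is near $\Delta_0$ it must be dominated by the contraction $\tfrac{a_1}{2}\alpha\delta$. This is exactly why the coupling $\beta\le\tau\alpha$ is needed: it turns the $\beta^2$ term into an $O(\alpha^2)$ term, which is then dominated at the boundary $\delta=\Delta_0$, while for small $\delta$ the slack $\Delta_0-\delta$ covers it. One subtlety to check is that we never need $X_+\in\kanifold$ before applying~\eqref{eq:delta_descent}. The proof of Proposition~\ref{prop:delta_descent} uses a Taylor expansion of $r$ along the segment from $X$ to $X_+$, and this segment stays in $\kanifold$ because $\|\eta+e_R\|\le 2\|\eta\|$ is small. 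If needed, we shrink $\bar\alpha',\bar\beta'$ so that $2(\bar\alpha' C_n\Delta_0+\bar\beta' M_f)<\Delta-\Delta_0$; then the whole segment lies in the tube, and the resulting bound $\delta(X_+)\le\Delta_0<\Delta$ confirms $X_+\in\kanifold$.
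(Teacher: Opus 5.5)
Your proposal is correct and follows the paper's proof essentially line for line. It makes the same choices $\tau=a_1/(4a_3M_f)$, $\bar\alpha'=\min\{\bar\alpha,\,a_1/(4a_2),\,a_1\Delta_0/(2a_4\tau^2M_f^2)\}$, $\bar\beta'=\min\{\bar\beta,\,\tau\bar\alpha'\}$, and reaches the same chain $\delta(X_+)\le(1-a_1\alpha/2)\delta+a_4\tau^2\alpha^2M_f^2\le\Delta_0$. Your two extra safeguards go slightly beyond the paper's written proof and are handled correctly: nonnegativity of $1-a_1\alpha/2$, and shrinking the steps so the segment from $X$ to $X_+$ stays in the tube for the Taylor expansion in Proposition~\ref{prop:delta_descent}.
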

\begin{proof}
Let $\bar\alpha,\bar\beta,a_1,\ldots,a_4$ be as in Proposition~\ref{prop:delta_descent}. Using $\|\Gf\|\le M_f$ and $\beta\le\tau\alpha$ in~\eqref{eq:delta_descent} gives $a_3\beta\delta\|\Gf\| \le a_3\tau\alpha\delta M_f$ and $a_4\beta^2\|\Gf\|^2 \le a_4\tau^2\alpha^2 M_f^2$. Set $\tau = a_1/(4a_3 M_f)$ so that $a_3\tau M_f = a_1/4$, and define
\[
\bar\alpha' = \min\big\{\bar\alpha,\, a_1/(4a_2),\, a_1\Delta_0/(2a_4\tau^2 M_f^2)\big\}, \qquad \bar\beta' = \min\{\bar\beta,\, \tau\bar\alpha'\}.
\]
Then $\delta(X_+) \le (1 - a_1\alpha/2)\delta + a_4\tau^2\alpha^2 M_f^2$. For $\delta\le\Delta_0$, the right-hand side is at most $(1-a_1\alpha/2)\Delta_0 + a_4\tau^2\alpha^2 M_f^2 \le \Delta_0$, where the last step uses $a_4\tau^2\alpha M_f^2 \le a_1\Delta_0/2$.
\end{proof}

By induction, if $X_0\in\manifold$ with $\delta(X_0)\le\Delta_0$ and the step sizes satisfy $0<\alpha_k\le\bar\alpha'$, $0<\beta_k\le\bar\beta'$, $\beta_k\le\tau\alpha_k$ for all $k$, then the iterates generated by Algorithm~\ref{alg:GOTD} remain in the tube $\kanifold$, i.e., $\delta(X_k)\le\Delta_0<\Delta$ for all $k\ge 0$.

\subsection{Lyapunov function and iteration complexity}\label{subsec:lyap_complexity}
We consider the following Lyapunov function to treat the feasibility and the optimality measures,
\begin{equation}\label{eq:Lyap_def}
    \lanifold_\lambda(X):=f(X)+\lambda\,\delta(X),\ \text{with}\ \lambda >0\ \text{as a balance factor}.
\end{equation}
Combining the estimates in Propositions~\ref{prop:delta_descent} and~\ref{prop:f_descent} points to the complexity analysis.

\begin{theorem}\label{thm:lyap_complexity}
Under Assumptions~\ref{assu:h}--\ref{assu:angle}, let $0<\Delta_0<\Delta$ and $X_0\in\manifold$ with $\delta(X_0)\le\Delta_0$. Let $\lambda = 2M_fC_n/a_1$. There exist $\bar\alpha''$, $\bar\beta''$, $\tau''$, $c_\delta$, $c_t>0$ such that for constant step sizes $\alpha\le\bar\alpha''$, $\beta\le\bar\beta''$, $\beta\le\tau''\alpha$, the iterates of Algorithm~\ref{alg:GOTD} satisfy $X_k\in\kanifold\cap\manifold$ and $\lanifold_\lambda(X_{k+1})
    \le
    \lanifold_\lambda(X_k)
    -c_\delta\,\alpha\,\delta(X_k)
    -c_t\,\beta\,\|\Gf(X_k)\|^2$, for all $k\ge 0$.
Moreover, for all $K\ge 1$, we have
\begin{equation}\label{eq:complexity_bounds}
    \min_{0\le k\le K-1}\delta(X_k)
    \le\frac{\lanifold_\lambda(X_0)-f_{\inf}}{c_\delta\alpha K},
    \qquad
    \min_{0\le k\le K-1}\|\Gf(X_k)\|^2
    \le\frac{\lanifold_\lambda(X_0)-f_{\inf}}{c_t\beta K},
\end{equation}
and $\delta(X_k)\to 0$, $\|\Gf(X_k)\|\to 0$ as $k\to\infty$.
\end{theorem}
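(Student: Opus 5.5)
The plan is to combine Propositions~\ref{prop:delta_descent} and~\ref{prop:f_descent} into a one-step descent inequality for $\lanifold_\lambda$, then telescope.

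\textbf{Step 1: keep the iterates in the tube.} We first take $\bar\alpha''\le\bar\alpha'$, $\bar\beta''\le\bar\beta'$ and $\tau''\le\tau$ from Lemma~\ref{lem:invariant_tube}. By the induction remark following that lemma, every iterate satisfies $\delta(X_k)\le\Delta_0$, so $X_k\in\kanifold\cap\manifold$. Hence both propositions apply at every $k$.

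\textbf{Step 2: one-step estimate for $\lanifold_\lambda$.} Write $\delta=\delta(X_k)$ and $\Gf=\Gf(X_k)$. Multiply~\eqref{eq:delta_descent} by $\lambda$ and add it to~\eqref{eq:f_descent}. For the $\Gh$ terms in~\eqref{eq:f_descent}, use Lemma~\ref{lem:Gn_delta}, i.e.\ $\|\Gh\|\le C_n\delta$, together with $\delta\le\Delta$:
\[
\alpha M_f\|\Gh\|+\tfrac{L_f}{2}\alpha^2\|\Gh\|^2\le \alpha M_fC_n\delta+\tfrac{L_f}{2}C_n^2\Delta\,\alpha^2\delta.
\]
This gives
\[
\lanifold_\lambda(X_{k+1})-\lanifold_\lambda(X_k)\le -\lambda a_1\alpha\delta+\lambda a_2\alpha^2\delta+\alpha M_fC_n\delta+\tfrac{L_f}{2}C_n^2\Delta\alpha^2\delta
\]
\[
\qquad -\beta\big(1-\tfrac{L_f}{2}\beta\big)\|\Gf\|^2+\lambda a_3\beta\delta\|\Gf\|+\lambda a_4\beta^2\|\Gf\|^2.
\]

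\textbf{Step 3: absorb the error terms.} With $\lambda=2M_fC_n/a_1$ we have $\lambda a_1\alpha\delta=2\alpha M_fC_n\delta$. The linear-in-$\alpha$ part is therefore $-\alpha M_fC_n\delta$. Choosing $\alpha$ small absorbs the $\alpha^2\delta$ terms, leaving $-\tfrac12\alpha M_fC_n\delta$.

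The cross term $\lambda a_3\beta\delta\|\Gf\|$ is the one step needing care, since it mixes the two measures. I would handle it with Young's inequality:
\[
\lambda a_3\beta\delta\|\Gf\|\le \tfrac14\beta\|\Gf\|^2+\lambda^2a_3^2\beta\delta^2 .
\]
Then use $\delta^2\le\Delta\delta$ and $\beta\le\tau''\alpha$ to bound the last piece by $\lambda^2a_3^2\Delta\tau''\alpha\delta$. Shrinking $\tau''$ makes this at most $\tfrac14\alpha M_fC_n\delta$. An alternative is to bound $\|\Gf\|\le M_f$ and use $\beta\le\tau''\alpha$ directly.

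Finally, take $\beta$ small enough that $\tfrac{L_f}{2}\beta+\lambda a_4\beta\le\tfrac14$. This yields the claimed inequality with $c_\delta=M_fC_n/4$ and $c_t=1/2$.

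\textbf{Step 4: telescoping and limits.} Sum the inequality for $k=0,\dots,K-1$. Since $\lanifold_\lambda\ge f\ge f_{\inf}$ on $\kanifold\cap\manifold$ by Assumption~\ref{assu:tubular_new},
\[
c_\delta\alpha\sum_k\delta(X_k)+c_t\beta\sum_k\|\Gf(X_k)\|^2\le \lanifold_\lambda(X_0)-f_{\inf}.
\]
Bounding each minimum by the corresponding average gives~\eqref{eq:complexity_bounds}. Letting $K\to\infty$, both series are finite, so their terms tend to zero: $\delta(X_k)\to0$ and $\|\Gf(X_k)\|\to0$.
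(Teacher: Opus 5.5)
Your proposal is correct and follows the paper's proof essentially step for step. The shared steps are tube invariance via Lemma~\ref{lem:invariant_tube}, adding $\lambda$ times \eqref{eq:delta_descent} to \eqref{eq:f_descent} with $\|G_h\|\le C_n\delta$ and $\delta^2\le\Delta\delta$, the same Young's inequality plus $\beta\le\tau''\alpha$ for the cross term, and telescoping against $f_{\inf}$. The only difference is that you split the $\alpha^2\delta$ absorption differently, which gives $c_\delta=M_fC_n/4$ instead of the paper's $M_fC_n/2$; this is immaterial, since the theorem only asserts that such constants exist.
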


\begin{proof}
Set $\bar\alpha'' = \min\{\bar\alpha',\, M_fC_n/(4(\lambda a_2 + \frac{L_f}{2}C_n^2\Delta))\}$, $\bar\beta'' = \min\{\bar\beta',\, 1/(2L_f + 4\lambda a_4)\}$, and $\tau'' = \min\{\tau,\, M_fC_n/(4\lambda^2 a_3^2\Delta)\}$. Lemma~\ref{lem:invariant_tube} reveals that $\delta(X_k)\le\Delta_0$ for all $k$. Fix $k$ and write $X:=X_k$, $X_+:=X_{k+1}$, $\delta:=\delta(X)$. Substituting $\|\Gh\|\le C_n\delta$ (Lemma~\ref{lem:Gn_delta}) into Proposition~\ref{prop:f_descent} gives $f(X_+)-f(X)\le-\beta\big(1-\tfrac{L_f}{2}\beta\big)\|\Gf\|^2+\alpha\,M_f C_n\,\delta +\tfrac{L_f}{2}\alpha^2 C_n^2\,\delta^2$.
Adding $\lambda$ times the feasibility estimate $\delta(X_+)-\delta \le -a_1\alpha\delta + a_2\alpha^2\delta + a_3\beta\delta\|\Gf\| + a_4\beta^2\|\Gf\|^2$ from~\eqref{eq:delta_descent} and relaxing $\delta^2$ by $\Delta\delta$, we have
\begin{equation}\label{eq:Lyap_raw}
\begin{aligned}
    \lanifold_\lambda(X_+)-\lanifold_\lambda(X)
    \le\;&
    -\beta\big(1-\tfrac{L_f}{2}\beta - \lambda a_4\beta\big)\|\Gf\|^2
    +\lambda\,a_3\,\beta\,\delta\,\|\Gf\|
    \\
    &-\big(\lambda\,a_1 - M_f C_n\big)\alpha\,\delta
    +\big(\lambda a_2 + \tfrac{L_f}{2}C_n^2\Delta\big)\alpha^2\delta.
\end{aligned}
\end{equation}
Applying Young's inequality $\lambda a_3\beta\delta\|\Gf\| \le \frac{\beta}{4}\|\Gf\|^2 + \lambda^2 a_3^2\Delta\beta\delta$ and $\beta\le\tau''\alpha$ lead to
\begin{equation}\label{eq:Lyap_3term}
\begin{aligned}
    \lanifold_\lambda(X_+)-\lanifold_\lambda(X)
    \le\;&
    -\beta\big(\tfrac{3}{4}-\tfrac{L_f}{2}\beta - \lambda a_4\beta\big)\|\Gf\|^2
    \\
    &-\big(\lambda a_1 - M_fC_n - \lambda^2 a_3^2\Delta\tau''\big)\alpha\delta
    +\big(\lambda a_2 + \tfrac{L_f}{2}C_n^2\Delta\big)\alpha^2\delta.
\end{aligned}
\end{equation}
For the $\|\Gf\|^2$ coefficient, $\bar\beta'' \le 1/(2L_f+4\lambda a_4)$ ensures $L_f\beta/2 + \lambda a_4\beta \le 1/4$, and thus $\tfrac{3}{4}-\tfrac{L_f}{2}\beta - \lambda a_4\beta \ge \tfrac{1}{2}$. For the $\alpha\delta$ coefficient, $\lambda = 2M_fC_n/a_1$ gives $\lambda a_1 - M_fC_n = M_fC_n$, and $\tau'' \le M_fC_n/(4\lambda^2 a_3^2\Delta)$ gives $\lambda^2 a_3^2\Delta\tau'' \le M_fC_n/4$, hence $\lambda a_1 - M_fC_n - \lambda^2 a_3^2\Delta\tau'' \ge \tfrac{3}{4}M_fC_n$. For the $\alpha^2\delta$ term, the choice of $\bar\alpha''$ ensures $(\lambda a_2 + \tfrac{L_f}{2}C_n^2\Delta)\alpha \le M_fC_n/4$. Assembling the estimates, we obtain
$\lanifold_\lambda(X_+)-\lanifold_\lambda(X) \le -c_\delta\alpha\delta - c_t\beta\|\Gf\|^2$ with $c_\delta = M_fC_n/2$ and $c_t = 1/2$. Telescoping from $k=0$ to $K-1$ leads to $\sum_{k=0}^{K-1}\big(c_\delta\alpha\,\delta(X_k)+c_t\beta\|\Gf(X_k)\|^2\big)
\le \lanifold_\lambda(X_0)-f_{\inf} < \infty$,
from which~\eqref{eq:complexity_bounds} and $\delta(X_k)\to 0$, $\|\Gf(X_k)\|\to 0$ follow.
\end{proof}

We present the first-order stationarity condition of problem~\eqref{eq:HM_opt} as follows, which is necessary for the local optimality~\cite[Theorem~6.12]{rockafellar2009variationalanalysis}.
\begin{definition}\label{def:stationary}
    A point $\bar X\in\hanifold\cap\manifold$ is called \emph{stationary} for problem~\eqref{eq:HM_opt} if $\innerp{\nabla f(\bar X),\eta}\ge 0$ for all $\eta\in\tangent_{\hanifold\cap\manifold}(\bar X)$, i.e., $-\nabla f(\bar X)\in\normal_{\hanifold\cap\manifold}(\bar X)$, or equivalently, the projected negative gradient vanishes, i.e., $\projection_{\tangent_{\hanifold\cap\manifold}(\bar X)}(-\nabla f(\bar X))=0$.
\end{definition}
Therefore, the quantity $\|\Gf(X)\|=\|\projection_{\tangent_{\hanifoldX\cap\manifold}(X)}(-\nabla f(X))\|$ serves as a suitable evaluation of the stationarity. In this view, \eqref{eq:complexity_bounds} in Theorem~\ref{thm:lyap_complexity} indeed delivers an $\complexity(1/\sqrt{K})$ convergence rate for both the feasibility and the optimality measures.

We conclude by examining the first-order stationarity of the accumulation points.
\begin{corollary}\label{cor:stationary}
Under conditions of Theorem~\ref{thm:lyap_complexity}, every accumulation point $\bar X\in\manifold$ of $\{X_k\}$ satisfies $h(\bar X)=0$. If, additionally, $\diff h_X|_{\tangent_\manifold(X)}$ has constant rank near $\bar X$ on $\manifold$, then $\bar X$ is a first-order stationary point of~\eqref{eq:HM_opt}.
\end{corollary}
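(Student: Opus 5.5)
The plan is to combine the limits $\delta(X_k)\to0$ and $\|\Gf(X_k)\|\to0$ from Theorem~\ref{thm:lyap_complexity} with continuity of the relevant quantities along a convergent subsequence.

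\textbf{Feasibility.} Let $X_{k_j}\to\bar X$ be a subsequence. The invariance argument (Lemma~\ref{lem:invariant_tube}) gives $\delta(X_{k_j})\le\Delta_0<\Delta$. Since $\delta=\dist(\cdot,\hanifold)$ is $1$-Lipschitz, $\delta(\bar X)=\lim_j\delta(X_{k_j})=0$. By Lemma~\ref{lem:delta_h_d}, $\|h(\bar X)\|\le C_h\delta(\bar X)=0$, so $h(\bar X)=0$. Closedness of $\manifold$ near $\bar X$ is implicitly needed (assume $\manifold$ locally closed, as for embedded submanifolds), giving $\bar X\in\hanifold\cap\manifold$.

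\textbf{Stationarity.} By Theorem~\ref{thm:IT_implies_clean} and Assumption~\ref{assu:IT}, $\tangent_{\hanifold\cap\manifold}(\bar X)=\ker(L_{\bar X})$ with $L_X:=\diff h_X|_{\tangent_\manifold(X)}$, and for each iterate $S(X)=\ker(L_X)$. So it suffices to show $\projection_{\ker L_{X_{k_j}}}\to\projection_{\ker L_{\bar X}}$; then
\[
\projection_{\ker L_{\bar X}}(-\nabla f(\bar X))=\lim_j \projection_{S(X_{k_j})}(-\nabla f(X_{k_j}))=\lim_j\Gf(X_{k_j})=0,
\]
which is stationarity in the sense of Definition~\ref{def:stationary}.

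\textbf{Continuity of the projection (main step).} Work in a local frame: $X\mapsto\projection_{\tangent_\manifold(X)}$ is continuous on $\manifold$, and $\diff h_X$ is continuous. Hence the operator $A(X):=\diff h_X\circ\projection_{\tangent_\manifold(X)}$ on $\eanifold$ is continuous, with $\ker A(X)=\ker L_X\oplus\normal_\manifold(X)$. Thus
\[
\projection_{\ker L_X}=\projection_{\ker A(X)}-\projection_{\normal_\manifold(X)} ,\qquad \projection_{\ker A(X)}=I-A(X)^\dagger A(X).
\]
The rank of $A(X)$ equals the rank of $L_X$, which by hypothesis is constant near $\bar X$ on $\manifold$. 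The Moore--Penrose inverse is continuous on sets of constant rank (Stewart/Wedin), so $A(X)^\dagger$ and $\projection_{\ker L_X}$ are continuous at $\bar X$. Equivalently, one can use the formula of Proposition~\ref{pro:proj_SX} with $(\diff h_X\circ\Phi_\manifold(X))^\dagger$, whose rank equals $\rank L_X$.

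The main obstacle is this constant-rank continuity step. Without constant rank, $\ker L_{X_k}$ could be strictly smaller in the limit (the rank jumps down at $\bar X$). Then the projections need not converge, and $\Gf\to0$ on the smaller subspaces would not control the projection onto the larger limiting subspace; this is exactly why the hypothesis is imposed.
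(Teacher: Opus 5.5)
Your proposal is correct and follows essentially the same route as the paper: you pass to the limit in $\Gf(X_{k_j})\to 0$, using that constant rank of $L_X=\diff h_X|_{\tangent_\manifold(X)}$ makes the relevant Moore--Penrose inverse continuous, so that $X\mapsto\projection_{S(X)}$ is continuous near $\bar X$. The paper applies this directly to $(\diff h_X\circ\Phi_\manifold(X))^\dagger=(L_XL_X^*)^\dagger$ in the formula of Proposition~\ref{pro:proj_SX}, while your $I-A(X)^\dagger A(X)-\projection_{\normal_\manifold(X)}$, with $A(X)=L_X\circ\projection_{\tangent_\manifold(X)}$, is the same projector written differently, as you yourself note.
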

\begin{proof}
By Theorem~\ref{thm:lyap_complexity}, $\delta(X_k)\to 0$ and $\|\Gf(X_k)\|\to 0$. The continuity of $\delta$ and $X_{k_j}\to\bar X$ give $\delta(\bar X) = \lim_{j}\delta(X_{k_j}) = 0$, indicating that $\bar X\in\hanifold$.

For stationarity, denote $L_X:=\diff h_X|_{\tangent_\manifold(X)}:\tangent_\manifold(X)\to\mbR^q$. As shown in the proof of Proposition~\ref{pro:proj_SX}, the adjoint $L_X^*:\mbR^q\to\tangent_\manifold(X)$ coincides with $\Phi_\manifold(X) = \projection_{\tangent_\manifold(X)}\circ\diff h_X^*$, thereby $\diff h_X\circ\Phi_\manifold(X) = L_X L_X^*\in\mbR^{q\times q}$. Since $\mathrm{rank}(L_X L_X^*) = \mathrm{rank}(L_X)$, the constant rank condition on $L_X$ is equivalent to that on $L_X L_X^*$, and thus the pseudoinverse of $L_X L_X^*$ is continuous. By the formula~\eqref{eq:proj_SX} in Proposition~\ref{pro:proj_SX}, the map $X\mapsto\projection_{S(X)}$ is then continuous near $\bar X$. Passing to the limit along $X_{k_j}\to\bar X$ in $\Gf(X_{k_j})=\projection_{S(X_{k_j})}(-\nabla f(X_{k_j}))\to 0$ gives $\projection_{S(\bar X)}(-\nabla f(\bar X))=0$.
\end{proof}

\begin{remark}\label{rem:RCRCQ}
The constant rank condition on $\diff h_X|_{\tangent_\manifold(X)}$ coincides with the \emph{relaxed constant rank constraint qualification} (RCRCQ) introduced in~\cite{andreani2024CQ_RieALM} for Riemannian optimization problems with equality constraints. This, in views of Theorem~\ref{thm:IT_implies_clean}, amounts to requiring that the slices $\hanifoldX\cap\manifold$ have constant dimension as $X$ varies near $\bar X$ on $\manifold$. Such a property is satisfied by all the instances outlined in Table~\ref{tab:tangentsets}.
\end{remark}

\section{Numerical experiments}\label{sec:experiments}
In this section, we evaluate the proposed GOTD (Algorithm~\ref{alg:GOTD}) on three applications within the scope of formulation~\eqref{eq:HM_opt}, each constrained to an intersection of two manifolds. We adopt the Riemannian augmented Lagrangian method (RALM)~\cite[Algorithm~1]{liu2020simple} as a baseline, which can handle Riemannian problems with equality and inequality constraints; we run RALM with the publicly available implementation.\footnote{\url{https://github.com/losangle/Optimization-on-manifolds-with-extra-constraints}} In addition, each experiment also incorporates some other task-specific methods for comparison, where we note that the Riemannian trust-region method on manifolds is run with \texttt{Manopt}'s default settings. The experiments are produced on a workstation that consists of two Intel(R) Xeon(R) Gold 6330 CPUs (at $2.00$GHz$\times28$, $42$M Cache), 512GB RAM. All the experiments are carried out in MATLAB (Release {9.7.0}) on the CPUs, drawing on the \texttt{Manopt} toolbox~\cite{boumal2014manopt}. The codes of the proposed method are available at \href{https://github.com/UCAS-YanYang}{https://github.com/UCAS-YanYang}.

\subsection{Low-rank approximation of spherical data}\label{subsec:sphere} Finding a low-rank approximation of normalized data plays a crucial role in various applications. Given $A\in\oblique(m,n):=\{X\in\mbR^{m\times n}\mid \ddiag(XX^\top)-\mathbf{1}=\mathbf{0}\}$, where rows encode data points with unit length, Chu et al.~\cite{chu2005lowrankoblique} formulated the approximation task as follows,
\begin{equation}\label{eq:sphere_problem}
    \begin{aligned}
        \min_{X\in\mbR^{m\times n}}\ \ &\frac{1}{2} \norm{\projection_{\Omega}(X-A)}^2\\
        \mathrm{s.\,t.}\ \ \ \ \,&X\in\oblique(m,n)\cap\fixedrank,
    \end{aligned}
\end{equation}
where $\Omega\subseteq\{1,2,\ldots,m\}\times\{1,2,\ldots,n\}$ represents observed entries and $\projection_\Omega$ defines the sampling operator: $\projection_{\Omega}(X)(i,j) = X(i,j)$ if $(i,j)\in\Omega$, otherwise $\projection_{\Omega}(X)(i,j) = 0$. 

Apart from RALM, we also take into account the method proposed in~\cite{yang2025spacedecouple}, which parameterizes $\oblique(m,n)\cap\fixedrank$ by a smooth manifold denoted by $\manifold_h$. Subsequently, \texttt{Manopt}'s Riemannian gradient descent and Riemannian trust-region methods are invoked on $\manifold_h$ for comparison, denoted by $\manifold_h$-RGD and $\manifold_h$-RTR, respectively.

Following the test of~\cite{yang2025spacedecouple}, we generate a ground truth $A=\projection_{\oblique(m,n)}(U^*\varSigma^*(V^*)^\top)$, where $U^*\in\stiefel(m,r)$ and $V^*\in\stiefel(n,r)$ are obtained by sampling entries from the standard normal distribution $\mathcal{N}(0,1)$ and taking the Q-factors of the QR factorizations, and $\varSigma^*\in\mbR^{r\times r}$ is a diagonal matrix with entries sampled from the uniform distribution on $(0,1)$. The oversampling factor is defined by $\mathrm{OS}:=|\Omega|/(r(m+n-r))$. All the methods share a common initial point $X_0=H_0V_0^\top$, where $V_0$ is generated in the same way as $V^*$ and $H_0=\projection_{\oblique(m,r^*)}(\tilde H_0)$, where entries of $\tilde H_0$ are sampled from $\mathcal{N}(0,1)$. The termination rules are set for respective methods: $\max\{\|\Gh(X_k)\|,\|\Gf(X_k)\|\}\le 10^{-10}$ for GOTD, \emph{subproblem accuracy} of RALM achieves the tolerance $10^{-8}$ (see~\cite[Algorithm~1]{liu2020simple}), the norm of Riemannian gradient on $\manifold_h$ achieves $10^{-10}$ for $\manifold_h$-RGD and $10^{-13}$ for $\manifold_h$-RTR. Reconstruction quality is measured by the relative test error $\|\projection_\varGamma(X-A)\|_\frob/\|\projection_\varGamma(A)\|_\frob$ on an independent test set $\varGamma$ with $|\varGamma|=|\Omega|$. All reported numbers are averaged over five random seeds.

We test with the dimension $(m,n)=(5000,6000)$, the oversampling factors $\mathrm{OS}\in\{6,7,8,9,10\}$, and the rank parameters $r\in\{8,9,10\}$. For GOTD, we adopt constant step sizes throughout: $\alpha=1$ in all experiments and $\beta$ tuned over the grid $\{1,5,10,20,30,40,50\}$. The same tuning strategy applies to the constant step size in $\manifold_h$-RGD. Table~\ref{tab:sphere_gotd_sweep} reports the performance of GOTD across different $(\mathrm{OS},r)$, with the true data matrix successfully recovered in every configuration.

\begin{table}[htbp]
\centering
\caption{Performance of GOTD on spherical data fitting problem with different oversampling factors and rank parameters, each entry averaged over five seeds.}
\label{tab:sphere_gotd_sweep}
\setlength{\extrarowheight}{0.5ex}
\footnotesize
\begin{tabular*}{\textwidth}{@{\extracolsep{\fill}}ccccccc}
\toprule
\multicolumn{1}{l}{\multirow{2}{*}{$\mathrm{OS}$}} & \multicolumn{2}{c}{$r=8$} & \multicolumn{2}{c}{$r=9$} & \multicolumn{2}{c}{$r=10$}\\
\cmidrule(l{2pt}r{2pt}){2-3}\cmidrule(l{2pt}r{2pt}){4-5}\cmidrule(l{2pt}r{2pt}){6-7}
 & Test err. & Time & Test err. & Time & Test err. & Time \\
\midrule
$6$  & $\expnumber{1.51}{-11}$ & $\phantom{1}5.41$  & $\expnumber{9.38}{-12}$ & $\phantom{1}3.17$  & $\expnumber{8.40}{-12}$ & $\phantom{1}5.50$  \\
$7$  & $\expnumber{9.21}{-12}$ & $\phantom{1}2.71$  & $\expnumber{8.05}{-12}$ & $\phantom{1}2.72$  & $\expnumber{8.79}{-12}$ & $\phantom{1}7.06$  \\
$8$  & $\expnumber{8.80}{-12}$ & $\phantom{1}4.45$  & $\expnumber{1.53}{-12}$ & $\phantom{1}2.83$  & $\expnumber{6.76}{-13}$ & $13.56$ \\
$9$  & $\expnumber{1.69}{-11}$ & $\phantom{1}5.69$  & $\expnumber{6.63}{-13}$ & $13.58$ & $\expnumber{6.54}{-12}$ & $12.15$ \\
$10$ & $\expnumber{2.83}{-12}$ & $\phantom{1}2.32$  & $\expnumber{6.84}{-13}$ & $13.89$ & $\expnumber{6.97}{-13}$ & $\phantom{1}9.96$  \\
\bottomrule
\end{tabular*}
\end{table}

Moreover, Figure~\ref{fig:sphere} compares the four methods on the scenario $(\mathrm{OS},r)=(6,10)$, where the feasibility is measured by $\|h(X)\|=\|\ddiag(XX^\top)-\mathbf{1}\|$. RALM and $\manifold_h$-RTR exhibit better iteration complexity, since the former solves a Lagrangian subproblem at each update and the latter exploits Hessian information. In terms of running time, GOTD is more efficient, as each of its iterations invokes only first-order oracles and one retraction on $\fixedrank$, without additional inner subproblems to solve.

\begin{figure}[htbp]
    \centering
    \hspace{3mm}\includegraphics[width=0.9\textwidth]{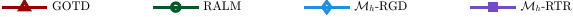}\\[2mm]
    \begin{minipage}{0.32\textwidth}\centering\includegraphics[width=\linewidth]{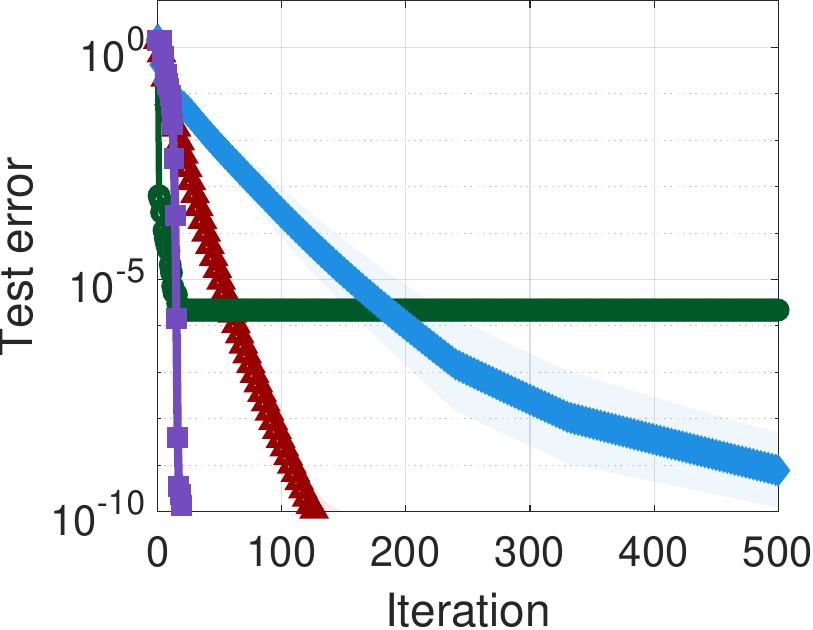}\end{minipage}\hfill
    \begin{minipage}{0.32\textwidth}\centering\includegraphics[width=\linewidth]{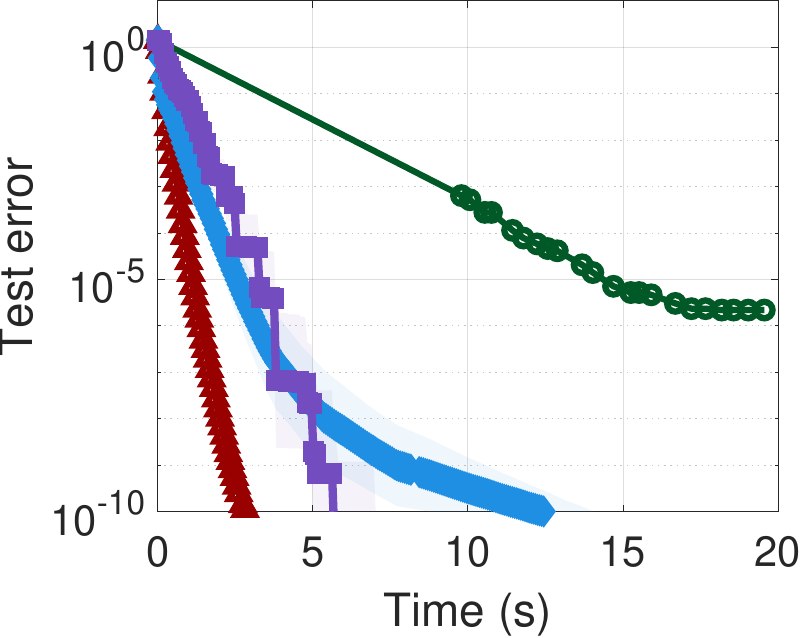}\end{minipage}\hfill
    \begin{minipage}{0.32\textwidth}\centering\includegraphics[width=\linewidth]{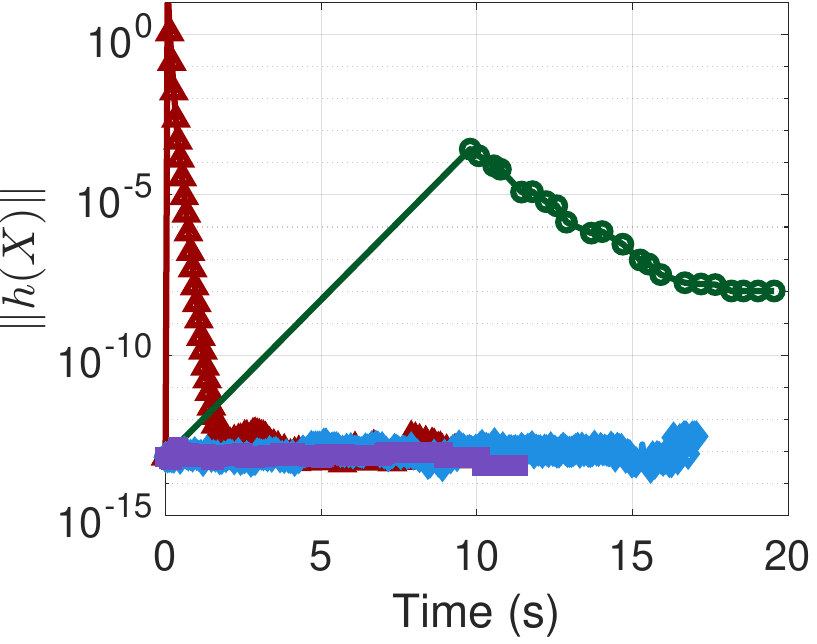}\end{minipage}
    \caption{Low-rank approximation of spherical data with $(m,n)=(5000,6000)$, $r=r^*=10$, and $\mathrm{OS}=6$. Left: test error versus iteration. Middle: test error versus wall-clock time. Right: feasibility violation $\|\ddiag(X_kX_k^\top)-\mathbf{1}\|$ versus wall-clock time.}
    \label{fig:sphere}
\end{figure}

\subsection{Low-rank approximation of hyperbolic embeddings}\label{subsec:hyperbolic}
Hyperbolic embeddings have shown effectiveness in representing hierarchical data, with applications ranging from graph embeddings to natural language processing~\cite{nickel2017poincare,jawanpuria2019lowrankhyperbolic}. To capture the hyperbolic geometry, the \emph{hyperboloid model} is introduced as $\mathbb{H}_n := \{y \in \mbR^{n+1} \mid \innerp{y,y}_J +1 = 0,\ y_1 > 0\}$,
where $y_1$ is the first element of the vector, $J := \Diag(-1, 1, \ldots, 1)$ is the Lorentz signature, and $\innerp{x,y}_J := x^\top J y$ denotes the Lorentzian inner product. As the upper sheet of an $n$-dimensional hyperboloid in $\mbR^{n+1}$, $\mathbb{H}^n$ is a smooth manifold with the distance $d_\mathbb{H}(x,y) := \operatorname{arccosh}(-\innerp{x,y}_J)$.

Given $m$ data points which have been embedded in $\mathbb{H}^n$ as $\bar x_1, \ldots, \bar x_m$, Jawanpuria et al.~\cite{jawanpuria2019lowrankhyperbolic} proposed to seek $x_1, \ldots, x_m \in \mathbb{H}^n$ as the approximations, which share a latent low-dimensional subspace---that is, the matrix $X = [x_1, \ldots, x_m]$ is low-rank. Stacking the constraints $x_i \in \mathbb{H}^n$ translates to $X$ lying on the \emph{matrix hyperboloid},
\begin{equation}\label{eq:matrixhyperboloid}
    \matrixhyperboloid := \{Y \in \mbR^{(n+1) \times m} \mid \ddiag(Y^\top J Y) + \mathbf{1} = \mathbf{0},\, Y_{1:}>\mathbf{0}\},
\end{equation}
where $Y_{1:}$ extracts the first row of $Y$ and the ``$>$'' is understood component-wise. Letting $r \ll \min\{m,n\}$ denote the rank parameter, finding a low-rank approximation of hyperbolic embeddings can be formulated as follows,
\begin{equation}\label{eq:hyperbolic_problem}
    \begin{aligned}
        \min_{X\in\mbR^{(n+1)\times m}}\ \ &f(X)=\sum_{i=1}^m \operatorname{arccosh}\!\big(-\innerp{x_i,\bar x_i}_J\big)^2\\
        \mathrm{s.\,t.}\ \ \ \ \ \ &X\in\matrixhyperboloid\cap \fixedrankplus
    \end{aligned}
\end{equation}
where $x_i$ denotes the $i$-th column of $X$, and the objective is the sum of squared distances between $x_i$ and $\bar x_i$ for $i=1,\ldots,m$.

\begin{table}[t]
\centering
\caption{Results of the four methods on low-rank hyperbolic embeddings on the WordNet mammals subtree. The $f_0$ and $f$ are the initial and returned objective value, respectively. For reference, the mean average precision computed through the original embedding $\bar x_1, \ldots, \bar x_m$ is $0.9385$.}
\label{tab:hyperbolic}
\setlength{\extrarowheight}{0.6ex}
\footnotesize
\begin{tabular*}{\textwidth}{@{\extracolsep{\fill}}clrrcr}
\toprule
{Rank} & {Algorithm} & {$f/f_0$} & {Time (s)} & {$\|h(X)\|$} & {Precision} \\
\midrule
\multirow{4}{*}{$r=5$}
  & GOTD              & \textbf{0.434} & \phantom{0}80.8          & $\expnumber{6.81}{-8\phantom{1}}$  & 0.8869 \\
  & RALM              & 0.994          & $699.2$                  & $\expnumber{1.82}{-6\phantom{1}}$  & 0.6637 \\
  & $\manifold_p$-RGD & 0.439          & \phantom{0}92.9          & $\expnumber{1.89}{-12}$            & 0.8881 \\
  & $\manifold_p$-RTR & 0.435          & \phantom{0}78.3          & $\expnumber{4.14}{-12}$            & \textbf{0.8883} \\
\midrule
\multirow{4}{*}{$r=10$}
  & GOTD              & \textbf{0.394}          & \phantom{0}65.9          & $\expnumber{5.01}{-8\phantom{1}}$  & \textbf{0.9039} \\
  & RALM              & 0.949          & $712.9$                  & $\expnumber{3.19}{-6\phantom{1}}$  & 0.8117 \\
  & $\manifold_p$-RGD & 0.402          & \phantom{0}99.7          & $\expnumber{3.38}{-12}$            & 0.8961 \\
  & $\manifold_p$-RTR & 0.396 & \phantom{0}82.5          & $\expnumber{4.14}{-12}$            & 0.9024 \\
\midrule
\multirow{4}{*}{$r=20$}
  & GOTD              & \textbf{0.369} & \phantom{0}98.8          & $\expnumber{1.64}{-7\phantom{1}}$  & \textbf{0.9003} \\
  & RALM              & 0.877          & $653.9$                  & $\expnumber{3.55}{-6\phantom{1}}$  & 0.8463 \\
  & $\manifold_p$-RGD & 0.380          & 100.3                    & $\expnumber{3.94}{-12}$            & 0.8989 \\
  & $\manifold_p$-RTR & 0.376          & \phantom{0}73.0          & $\expnumber{4.89}{-12}$            & 0.8981 \\
\bottomrule
\end{tabular*}
\end{table}

The existing approach for~\eqref{eq:hyperbolic_problem}, developed by Jawanpuria et al.~\cite{jawanpuria2019lowrankhyperbolic}, considers the product manifold $\manifold_p:=\stiefel(n,r)\times \mathbb{H}^{m}_{r}$, together with a smooth map defined by
\begin{equation*}\label{eq:jaw-phi}
   \phi:\manifold_p\to\mbR^{(n+1)\times m}:\ \phi(U, Z) \;=\; \begin{bmatrix} 1 & \mathbf{0}_r^\top \\ \mathbf{0}_n & U \end{bmatrix} Z.
\end{equation*}
Since $\phi(\manifold_p)\subsetneq\matrixhyperboloid\cap\fixedrankplus$, it produces a surrogate problem $\min_{Z\in\manifold_p} f(\phi(Z))$, thereby allowing the employment of Riemannian gradient descent (RGD) and Riemannian trust-region (RTR) methods on $\manifold_p$. Our method GOTD, in turn, tackles the coupled constraint $X\in\matrixhyperboloid\cap\fixedrankplus$ in the original space $\mbR^{(n+1)\times m}$; the computation of $\Gf(X)$ is referred to Appendix~\ref{app:proj-hyperboloid}. We also include RALM for comparison, interpreting $X\in\matrixhyperboloid$ as the equality constraint $h(X)=\ddiag(X^\top J X)+\mathbf{1}=\mathbf{0}$.

We evaluate the methods on the mammal subtree of the WordNet lexical database~\cite{miller1995wordnet}, with $(n,m)=(300,1170)$ and rank parameters $r\in\{5,10,20\}$. The nodes are nouns and the edges encode the ``is-a'' relationship; for instance, the edge between ``squirrel'' and ``rodent'' indicates that a squirrel is a rodent. The original embeddings $\bar x_1,\ldots,\bar x_m$ are obtained from the implementation~\cite{nickel2017poincare}. The quality of the approximations is measured by the \emph{mean average precision}: given the true edge set $E$, for each pair $(u,v)\in E$, distance $d_\mathbb{H}(x_u,x_v)$ is ranked among the distances $\{d_\mathbb{H}(x_u,x_w)\mid (u,w)\notin\mathcal{E}\}$, and the resulting precision over all true edges is averaged.

Let $\bar X=[\bar x_1, \ldots, \bar x_m]$, and collect the top-$r$ left singular vectors of the submatrix $\bar X^\prime:=\bar X(2{:}n{+}1,:)$ to obtain $U_r\in\stiefel(n,r)$. We denote $\bar Z^\prime := U_r^\top \bar X^\prime$ and write $z_i^\prime$ for its $i$-th column. Augmenting each column as $\bar z_i := [\sqrt{1+\|z_i^\prime\|^2},\, z_i^{\prime\top}]^\top \in \mathbb{H}_r$ for $i=1,\ldots,m$ and assembling them into $\bar Z = [\bar z_1, \ldots, \bar z_m]$, we initialize $\manifold_p$-RGD and $\manifold_p$-RTR at $(U_r, \bar Z)$, and initialize GOTD and RALM at $X_0 = \phi(U_r, \bar Z)$. After conducting initial tests, we use $\alpha=1$ and $\beta$ tuned over $\{0.1,0.15,0.2,0.5,0.75,1\}$ for GOTD; the same tuning strategy applies to the constant step size in $\manifold_p$-RGD.

Table~\ref{tab:hyperbolic} shows that the feasibility measure $\|h\|$ reaches the order of $10^{-7}$ for GOTD, which is acceptable, and stays at machine precision for $\manifold_p$-RGD and $\manifold_p$-RTR, since the methods are feasible in the sense that $\phi(\manifold_p)\subset\matrixhyperboloid$. For a fair comparison, we post-process the output of GOTD by projecting it onto $\matrixhyperboloid\cap\fixedrankplus$ via alternating projections, yielding $X_{\mathrm{gotd}}$, on which we report the ``$f/f_0$'' and the precision. Across the three ranks, GOTD attains the lowest cost $f$, together with the highest mean average precision at $r=10$ and $r=20$. A plausible reason for the enhanced performance is the observation $X_{\mathrm{gotd}}\notin\phi(\manifold_p)$, which indicates that GOTD searches over a larger set. Moreover, the running time of GOTD is competitive with the Riemannian algorithms conducted on $\manifold_p$.

\subsection{Compressed modes in physics}\label{subsec:CMP}
In quantum mechanics, spatially localized solutions to Schr\"odinger's equation have attracted interest recently~\cite{ozolins2013compressed}. Such solutions, known as \emph{compressed modes}, translate into the sparsity of the discretization matrix. Specifically, with $X\in\stiefel(n,p)$ collecting $p$ orthonormal wave functions, the task of finding compressed modes in physics can be formulated as follows,
\begin{equation}\label{eq:CMP_problem}
    \begin{aligned}
        \min_{X\in\mbR^{n\times p}}\ \ &\trace(X^\top A X)\\
        \mathrm{s.\,t.}\ \ \ \ \,&X\in\stiefel(n,p)\cap\canifold_s
        \\[-0.5mm]
    \end{aligned}
\end{equation}
where $\sparseset=\{Y\in\mbR^{n\times p}\mid\|Y\|_0= s\}$, $A\in\mbR^{n\times n}$ is a symmetric matrix discretizing the Hamiltonian, and $s>0$ is a parameter depicting the sparsity. Hence~\eqref{eq:CMP_problem} is an instance of~\eqref{eq:HM_opt} with $(\hanifold,\manifold)=(\stiefel(n,p),\sparseset)$, to which GOTD can be applied.

The existing approaches turn to the relaxation by adding an $\ell_1$ regularization of the wave functions~\cite{ozolins2013compressed,chen2020ManPG}, yielding
\begin{equation}\label{eq:CMP_relaxed}
    \begin{aligned}
        \min_{X\in\mbR^{n\times p}}\ \ &\trace(X^\top A X) + \mu\|X\|_1\\
        \mathrm{s.\,t.}\ \ \ \ \,&X\in\stiefel(n,p).
        \\[-1mm]
    \end{aligned}
\end{equation} 
In the comparison, we implement GOTD and RALM for~\eqref{eq:CMP_problem}, where RALM treats the constraint $X\in\stiefel(n,p)$ as the equality constraint $h(X)=X^\top X-I_p=0$. Moreover, the proximal gradient methods ManPG~\cite[Algorithm~2]{chen2020ManPG} and SLPG~\cite[Algorithm~3]{liu2024SLPG} are applied to~\eqref{eq:CMP_relaxed}, which are able to tackle nonsmooth objectives.

Following the setting of~\cite{chen2020ManPG}, we take the 1D free-electron Hamiltonian $-\tfrac12\partial_x^2$ on the interval $[0, L]$ with $L = 50$, discretized on a uniform grid of $n = 256$ interior points, and seek $p = 15$ orthonormal wave functions. The relaxation parameter for the two $\ell_1$-penalty methods is set to $\mu = 1/30$, taken from the publicly available implementation of ManPG.\footnote{\url{https://github.com/chenshixiang/ManPG/tree/master}} Sparsity is measured by the ratio of the non-zero entry, $\rho(X) := (np - \|X\|_0)/(np)$. To report the sparsity of $X$, we treat an entry as zero whenever $|X_{ij}| < 10^{-8}$ for ManPG and SLPG; the iterates of GOTD and RALM are exactly on $\canifold_s$ at every step, and thus the threshold has no bearing on either method. The initial points for ManPG and SLPG are obtained by running a Riemannian sub-gradient descent on problem~\eqref{eq:CMP_relaxed}, while GOTD and RALM additionally project the points onto $\canifold_s$ as the warm start. The step size for ManPG and SLPG is $1/(2\lambda_{\max}(A)) = L^2/4n^2$ with $\lambda_{\max}(A)$ denoting the largest eigenvalue of $A$. Similarly, we take $\alpha=1$ and $\beta=L^2/4n^2$ for GOTD. The methods ManPG and SLPG are terminated when the iterate satisfies $|F(X_k) - F(X_{k+1})| \le 10^{-7}$ with $F(X):=\trace(X^\top A X) + \mu\|X\|_1$, and GOTD is terminated when $\max\{\|\Gh(X_k)\|,\|\Gf(X_k)\|\}\le 10^{-10}$.

Figure~\ref{fig:CMP} reports the cost $\trace(X^\top AX)$, the sparsity $\rho$, and the feasibility measure $\|h(X)\|=\|X^\top X-I_p\|$ of the iterates. Specifically, 
ManPG and SLPG rapidly sparsify the iterates while maintaining the cost value, with the sparsity steady at $\rho \approx 0.6$; this motivates us to configure the manifold $\canifold_s$ with $s=\rho\times np$ and $\rho\in\{0.6,0.7\}$ in the formulation~\eqref{eq:CMP_problem}. Consequently, the curves reveal that GOTD attains lower cost value under both levels of sparsity, exhibiting the effectiveness and efficiency of the proposed method. In addition, RALM, also targeting the formulation~\eqref{eq:CMP_problem}, approximately matches the cost value returned by GOTD at the same sparsity, but accrues a slightly higher feasibility violation.

\begin{figure}[htbp]
    \centering
    \includegraphics[width=0.95\textwidth]{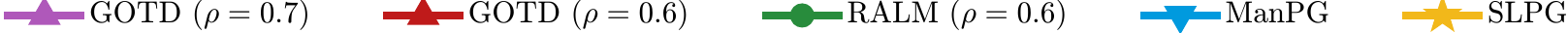}\\[2mm]
    \begin{minipage}{0.32\textwidth}\centering\includegraphics[width=\linewidth]{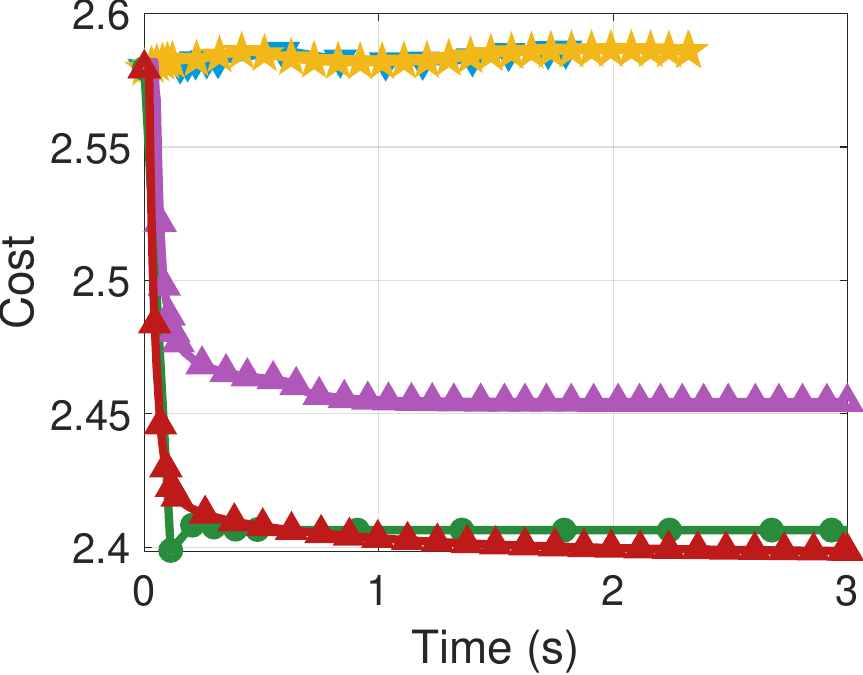}\end{minipage}\hfill
    \begin{minipage}{0.32\textwidth}\centering\includegraphics[width=\linewidth]{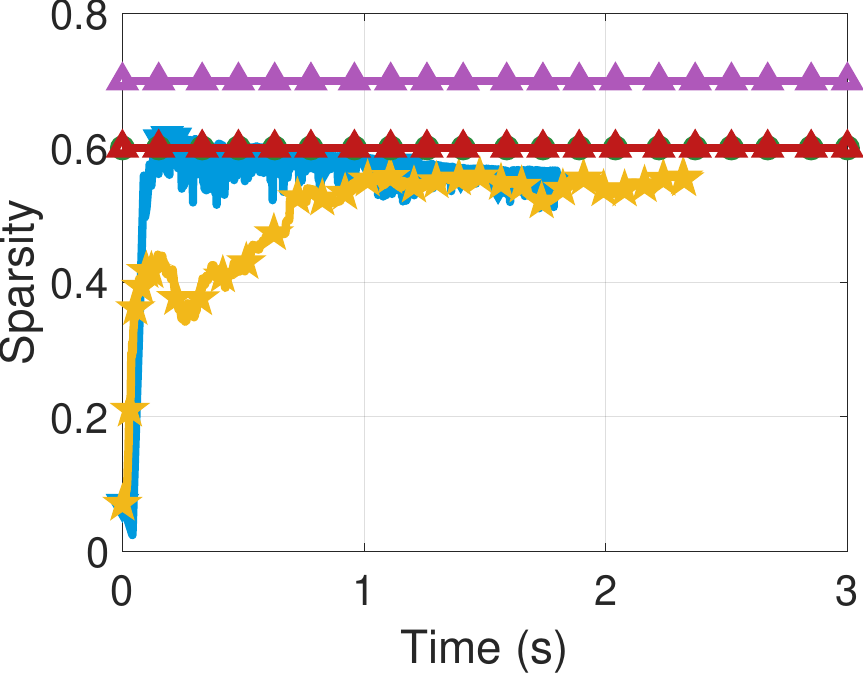}\end{minipage}\hfill
    \begin{minipage}{0.32\textwidth}\centering\includegraphics[width=\linewidth]{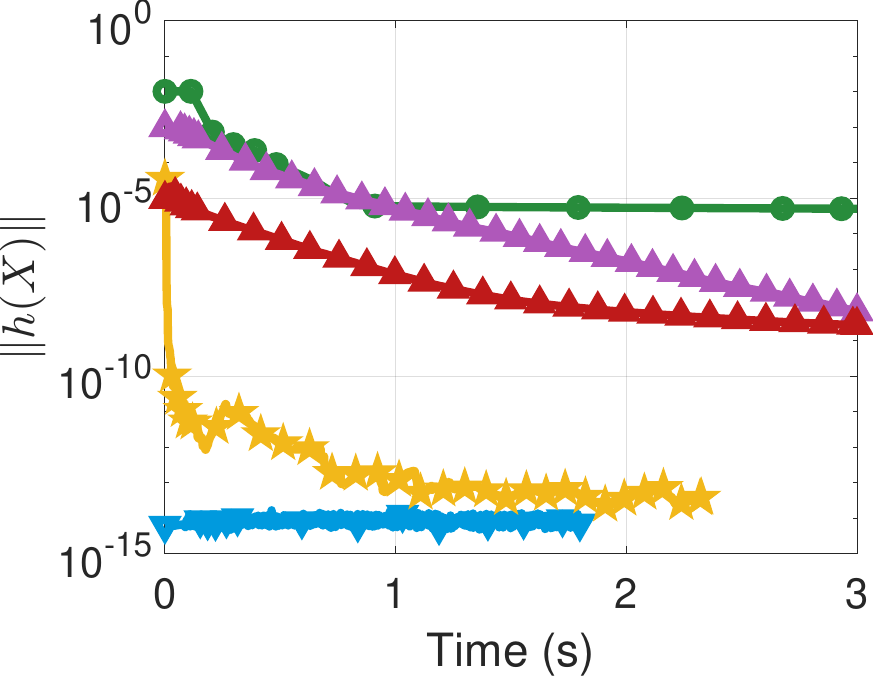}\end{minipage}
    \caption{Comparison on finding compressed modes with $(n,p)=(256,15)$.}
    \label{fig:CMP}
\end{figure}

\section{Conclusions and perspectives}\label{sec:conclusion}
In this work, we propose GOTD, a geometric method for optimization over the intersection of two manifolds $\hanifold\cap\manifold$, under intrinsic transversality. The main principle is to decompose the update into two orthogonal directions tangent to $\manifold$: the projected Gauss--Newton direction improving the feasibility and the projected anti-gradient direction decreasing the objective. Importantly, constructions of the two components are promoted by two equivalent characterizations of intrinsic transversality in Theorem~\ref{thm:PB-implies-IT} and Theorem~\ref{thm:IT_implies_clean}, respectively. Convergence analysis and numerical experiments validate the effectiveness and efficiency of GOTD.

The developed theory suggests several avenues worth exploration. First, given the first-order intersection rule~\eqref{eq:tangent_decom} enlightening our work, it is reasonable to consider the second-order counterpart $\tangent^2_{\hanifold\cap\manifold}(X;\eta)=\tangent^2_\hanifold(X;\eta)\cap\tangent^2_\manifold(X;\eta)$ identified in~\cite{yang20252ndVariety} to devise a second-order extension of the proposed GOTD, where $\tangenttwo$ denotes the second-order tangent set. In addition, adapting Algorithm~\ref{alg:GOTD} to accommodate stochastic oracles is of interest for large-scale scientific computation where only noisy estimates of $\nabla f$ and $h$ are accessible; some relevant techniques can be found in~\cite{schechtman2023ODCGM,ablin2024infeasible}. Moreover, extending GOTD to handle multiple intersecting constraints $X\in\manifold\cap\hanifold_1\cap\cdots\cap\hanifold_N$ also appears as a potential direction, which can borrow some ideas from~\cite{lewis2009localaveraged}.

\section*{Acknowledgments}
The authors would like to thank P.-A. Absil for his comments and insightful suggestions on this work.

\newpage

\appendix

\section{Projection onto $\tangent_{\matrixhyperboloid\cap\lowrank}(X)$}\label{app:proj-hyperboloid} Given a rank parameter $s$ with $1\le s\le \min\{(n+1),m\}$, we then derive an efficient procedure for computing the projection onto the space $\tangent_{\matrixhyperboloid\cap\lowrank}(X)$, which is invoked by the experiment in section~\ref{subsec:hyperbolic}. Let $J=\Diag(-1,1,\ldots,1)\in\mbR^{(n+1)\times(n+1)}$, and write $X_i$ for the $i$-th column of a matrix $X$. The matrix hyperboloid $\matrixhyperboloid$ defined in~\eqref{eq:matrixhyperboloid} is the level set of $h:\mbR^{(n+1)\times m}\to\mbR^m$ with $h_j(X)=X_j^\top JX_j+1$ and the differentials $\diff h_X(Z)=2\ddiag(X^\top JZ)$ and $\diff h_X^*(\lambda)=2JX\Diag(\lambda)$.
Moreover, the normal and tangent spaces are $\normal_{\matrixhyperboloid}(X)=\{JX\Diag(\lambda)\mid \lambda\in\mbR^m\}$ and $\tangent_{\matrixhyperboloid}(X)=\{Z\in\mbR^{(n+1)\times m}\mid X_j^\top JZ_j=0,\;j=1,\ldots,m\}$, respectively. Regarding the geometry of $\lowrank$, we consider the singular value decomposition $X=U\Sigma V^\top$ with $U\in\stiefel(n+1,s)$ and $V\in\stiefel(m,s)$ to give the characterization $\tangent_{\lowrank}(X)
=\{UU^\top Z+ZVV^\top-UU^\top ZVV^\top\mid Z\in\mbR^{(n+1)\times m}\}$, and the orthogonal projection $\projection_{\tangent_{\lowrank}(X)}(Z) = UU^\top Z+ZVV^\top-UU^\top ZVV^\top$; see the developments in~\cite{vandereycken2013lowrankcompletion,olikier2026tangentconeproof}.
Building on the geometry of each manifold, we investigate the intersection.
\begin{proposition}\label{prop:hyp_transversal}
At every point $X \in \matrixhyperboloid \cap \lowrank$, the manifolds $\matrixhyperboloid$ and $\lowrank$ intersect transversally, i.e., $\tangent_{\matrixhyperboloid}(X) + \tangent_{\lowrank}(X) = \mbR^{(n+1)\times m}$. Consequently, $\matrixhyperboloid \cap \lowrank$ is a smooth submanifold of dimension $s(n+m-s+1)-m$.
\end{proposition}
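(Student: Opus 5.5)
Transversality is equivalent to $\normal_{\matrixhyperboloid}(X)\cap\normal_{\lowrank}(X)=\{0\}$ by~\eqref{eq:transversally}, so I will work with normal spaces. Using the descriptions recalled above, $\normal_{\matrixhyperboloid}(X)=\{JX\Diag(\lambda)\mid\lambda\in\mbR^m\}$. The normal space of $\lowrank$ at $X=U\Sigma V^\top$ is the complement of the tangent projection, $\normal_{\lowrank}(X)=\{Z\mid U^\top Z=0,\ ZV=0\}$, i.e., $(I-UU^\top)Z(I-VV^\top)=Z$. I will take a normal vector $Z=JX\Diag(\lambda)$ lying in both spaces and show $\lambda=0$.

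The key condition is $ZV=0$ together with $U^\top Z=0$. Since $X=U\Sigma V^\top$, we have $Z=JU\Sigma V^\top\Diag(\lambda)$. The condition $U^\top Z=0$ gives $U^\top JU\,\Sigma V^\top\Diag(\lambda)=0$. This alone need not force $\lambda=0$, because $U^\top JU$ may be singular. Instead, I will use the columnwise structure: the $j$-th column of $Z$ is $\lambda_j JX_j$.

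Membership in $\normal_{\lowrank}(X)$ implies $Z$ is orthogonal to $\tangent_{\lowrank}(X)$. Every $W$ with columns in $\mathrm{span}(U)=\mathrm{span}(X_1,\dots,X_m)$ is tangent, since $UU^\top W=W$. Take $W=X E_{jj}$, the matrix whose only nonzero column is $X_j$ in position $j$. Then
\[
0=\innerp{Z,W}=\lambda_j X_j^\top J X_j=-\lambda_j,
\]
using $X_j^\top JX_j=-1$ on the hyperboloid. Hence $\lambda_j=0$ for all $j$, so $Z=0$ and transversality holds. This step is the only real content, and it is where the defining equation of $\matrixhyperboloid$ enters. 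The main thing to check is that $XE_{jj}\in\tangent_{\lowrank}(X)$, which is immediate from the projection formula since its column space lies in $\mathrm{span}(U)$.

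For the consequence, transversality gives that $\matrixhyperboloid\cap\lowrank$ is a smooth submanifold near each point, with tangent space $\tangent_{\matrixhyperboloid}(X)\cap\tangent_{\lowrank}(X)$ by~\eqref{eq:cone_tranverse}. Its codimension is the sum of the codimensions. The hyperboloid has codimension $m$, since $\diff h_X$ is surjective; indeed, $\diff h_X^*(\lambda)=2JX\Diag(\lambda)$ is injective because each column $X_j\ne0$. The manifold $\lowrank$ has dimension $s(n+1+m-s)$. Subtracting $m$ gives the dimension $s(n+m-s+1)-m$.
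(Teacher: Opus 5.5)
Your proof is correct and is essentially the paper's argument. The paper tests $JX\Diag(\lambda)$ against all tangent vectors $XC$ with $C\in\mbR^{m\times m}$ to get $X^\top JX\Diag(\lambda)=0$, then reads off the diagonal using $\ddiag(X^\top JX)=-\mathbf{1}$; this is exactly your choice $C=E_{jj}$, and your dimension count matches the paper's as well.
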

\begin{proof}
It suffices to prove that $\normal_{\matrixhyperboloid}(X)\cap\normal_{\lowrank}(X)=\{0\}$ by definition of transversality. To see this, take any $JX\Diag(\lambda)\in\normal_{\matrixhyperboloid}(X)$, and suppose additionally that it belongs to $\normal_{\lowrank}(X)$. Note that for any $C\in\mbR^{m\times m}$, we have $XC\in\tangent_{\lowrank}(X)$, and thus $JX\Diag(\lambda)\in \normal_{\lowrank}(X)$ gives $\innerp{JX\Diag(\lambda),\,XC} = 0$, which forces $X^\top JX\Diag(\lambda)=0$. Taking the diagonal and recalling $\ddiag(X^\top JX)=-\mathbf{1}$ yields $\lambda=0$, thereby $\normal_{\matrixhyperboloid}(X)\cap\normal_{\lowrank}(X)=\{0\}$. The dimension of the intersection follows from $\dime(\lowrank)+\dime(\matrixhyperboloid)-\dime(\mbR^{(n+1)\times m})=s(n+m-s+1)-m$.
\end{proof}

With Proposition~\ref{prop:hyp_transversal} depicting the transversality, we can deliver the computation of the projection onto $\tangent_{\matrixhyperboloid\cap\lowrank}(X)$ via Proposition~\ref{pro:proj_SX}. To exploit the low-rank structure $X=U\varSigma V^\top$, we decompose $JX=UP+Q$ with $P:=U^\top JX\in\mbR^{s\times m}$ and $Q:=(I-UU^\top)JX\in\mbR^{(n+1)\times m}$. Following the notation in Proposition~\ref{pro:proj_SX} that $\Phi_{\lowrank}(X)=\projection_{\tangent_{\lowrank}(X)}\circ\diff h_X^*$, we obtain the following identity,
\begin{equation}\label{eq:Phi_lowrank}
\Phi_{\lowrank}(X)(\lambda)=2\projection_{\tangent_{\lowrank}(X)}\!\big(JX\Diag(\lambda)\big)=2\big[U\,P\Diag(\lambda)+Q\Diag(\lambda)\,VV^\top\big],
\end{equation}
Letting $D_P:=\Diag\!\big(\|P_1\|_2^2,\ldots,\|P_m\|_2^2\big)\in\mbR^{m\times m}$ collect the squared column norms of $P$, a direct computation then gives, for every $\lambda\in\mbR^m$, $\diff h_X\circ\Phi_{\lowrank}(X)[\lambda]=4A\lambda$ with $A:=D_P+(Q^\top Q)\odot(VV^\top)$. Propositions~\ref{pro:proj_SX} and~\ref{prop:hyp_transversal} guarantee that $A$ is invertible. Substituting~\eqref{eq:Phi_lowrank} into~\eqref{eq:proj_SX} with $\xi\in\mbR^{(n+1)\times m}$ and $\eta:=\projection_{\tangent_{\lowrank}(X)}(\xi)$, we obtain
\begin{equation}\label{eq:proj-KKT}
\projection_{\tangent_{\matrixhyperboloid\cap\lowrank}(X)}(\xi) = \eta-U(P\Diag(\lambda))-(Q\Diag(\lambda)V)V^\top,
\end{equation}
where $\lambda\in\mbR^m$ solves the linear system $A\lambda=b$, with the $i$-th entry of $b$ given by $b_i:=(JX_i)^\top\eta_i$. Although $A$ is of size $m\times m$, it need not be assembled in advance. In fact, the action on any $w\in\mbR^m$ admits the factored form $Aw=D_P\,w+\sum_{l=1}^{s}V_l\odot\bigl(Q^\top(Q(V_l\odot w))\bigr)$, where each matrix-vector product involving $Q$ takes $\complexity((n+m)s)$ flops via the SVD of $X$, and thus computing $Aw$ costs $\complexity(s^2(n+m))$. Therefore, we can solve $A\lambda=b$ by the preconditioned conjugate gradient method, with $\Diag(\diag(A))$ as the preconditioner. Taking $\xi=-\nabla f(X)$ for~\eqref{eq:proj-KKT} yields the $\Gf(X)$ in Algorithm~\ref{alg:GOTD}.

\newpage

\bibliographystyle{siamplain}
\bibliography{references}

@article{boumal2014manopt,
  title={Manopt, a {M}atlab toolbox for optimization on manifolds},
  author={Boumal, Nicolas and Mishra, Bamdev and Absil, P.-A. and Sepulchre, Rodolphe},
  journal={The Journal of Machine Learning Research},
  volume={15},
  number={1},
  pages={1455--1459},
  year={2014},
  publisher={JMLR. org}
}

@article{federer1959curvature,
  title={Curvature measures},
  author={Federer, Herbert},
  journal={Transactions of the American Mathematical Society},
  volume={93},
  number={3},
  pages={418--491},
  year={1959}
}

@article{boumal2019global,
  title={Global rates of convergence for nonconvex optimization on manifolds},
  author={Boumal, Nicolas and Absil, Pierre-Antoine and Cartis, Coralia},
  journal={IMA Journal of Numerical Analysis},
  volume={39},
  number={1},
  pages={1--33},
  year={2019},
  publisher={Oxford University Press}
}

@article{neumann1950functional,
  title={Functional Operators},
  author={Neumann, Von},
  journal={The Geometry of Orthogonal Spaces},
  year={1950},
  publisher={Princeton Univ. Press}
}

@book{Hormander1985III,
  author    = {H{\"o}rmander, Lars},
  title     = {The Analysis of Linear Partial Differential Operators {III}},
  publisher = {Springer},
  address   = {Berlin},
  year      = {1985}
}

@book{nocedal2006numerical,
  title={Numerical Optimization},
  author={Nocedal, Jorge and Wright, Stephen J},
  year={2006},
  publisher={Springer}
}

@book{absil2008optimization,
  title={Optimization Algorithms on Matrix Manifolds},
  author={Absil, P.-A. and Mahony, Robert and Sepulchre, Rodolphe},
  year={2008},
  publisher={Princeton University Press}
}

@book{rockafellar2009variationalanalysis,
  title={Variational Analysis},
  author={Rockafellar, R Tyrrell and Wets, Roger J-B},
  volume={317},
  year={2009},
  publisher={Springer Science \& Business Media}
}

@book{lee2012manifolds,
  title={Smooth Manifolds},
  author={Lee, John M},
  year={2012},
  publisher={Springer}
}

@book{boumal2023introduction,
  title={An Introduction to Optimization on Smooth Manifolds},
  author={Boumal, Nicolas},
  year={2023},
  publisher={Cambridge University Press}
}

@article{cason2013iterative,
  title={Iterative methods for low rank approximation of graph similarity matrices},
  author={Cason, Thomas P and Absil, P.-A. and Van Dooren, Paul},
  journal={Linear Algebra and its Applications},
  volume={438},
  number={4},
  pages={1863--1882},
  year={2013},
  publisher={Elsevier}
}

@article{li2020jotaspectral,
  title={Matrix optimization over low-rank spectral sets: stationary points and local and global minimizers},
  author={Li, Xinrong and Xiu, Naihua and Zhou, Shenglong},
  journal={Journal of Optimization Theory and Applications},
  volume={184},
  pages={895--930},
  year={2020},
  publisher={Springer}
}

@article{li2023normalboundedaffine,
  title={Normal cones intersection rule and optimality analysis for low-rank matrix optimization with affine manifolds},
  author={Li, Xinrong and Luo, Ziyan},
  journal={SIAM Journal on Optimization},
  volume={33},
  number={3},
  pages={1333--1360},
  year={2023},
  publisher={SIAM}
}

@article{levin2025effect,
  title={The effect of smooth parametrizations on nonconvex optimization landscapes},
  author={Levin, Eitan and Kileel, Joe and Boumal, Nicolas},
  journal={Mathematical Programming},
  volume={209},
  number={1},
  pages={63--111},
  year={2025},
  publisher={Springer}
}

@article{yang2025spacedecouple,
  title={A space-decoupling framework for optimization on bounded-rank matrices with orthogonally invariant constraints},
  author={Yang, Yan and Gao, Bin and Yuan, Ya-xiang},
  journal={Mathematical Programming},
  pages={1--53},
  year={2026},
  publisher={Springer}
}

@article{yang20252ndVariety,
  title={Variational analysis of determinantal varieties},
  author={Yang, Yan and Gao, Bin and Yuan, Ya-xiang},
  journal={arXiv preprint arXiv:2511.22613},
  year={2025}
}

@article{nickel2017poincare,
  title={Poincar{\'e} embeddings for learning hierarchical representations},
  author={Nickel, Maximillian and Kiela, Douwe},
  journal={Advances in neural information processing systems},
  volume={30},
  year={2017}
}

@inproceedings{jawanpuria2019lowrankhyperbolic,
  title={Low-rank approximations of hyperbolic embeddings},
  author={Jawanpuria, Pratik and Meghwanshi, Mayank and Mishra, Bamdev},
  booktitle={2019 IEEE 58th conference on decision and control},
  pages={7159--7164},
  year={2019},
  organization={IEEE}
}

@article{peng2025normalized,
  title={Normalized tensor train decomposition},
  author={Peng, Renfeng and Zhu, Chengkai and Gao, Bin and Wang, Xin and Yuan, Ya-xiang},
  journal={arXiv preprint arXiv:2511.04369},
  year={2025}
}

@article{lewis2008alternatingmanifolds,
  title={Alternating projections on manifolds},
  author={Lewis, Adrian S and Malick, J{\'e}r{\^o}me},
  journal={Mathematics of Operations Research},
  volume={33},
  number={1},
  pages={216--234},
  year={2008},
  publisher={INFORMS}
}

@article{lewis2009localaveraged,
  title={Local linear convergence for alternating and averaged nonconvex projections},
  author={Lewis, Adrian S and Luke, D Russell and Malick, J{\'e}r{\^o}me},
  journal={Foundations of Computational Mathematics},
  volume={9},
  number={4},
  pages={485--513},
  year={2009},
  publisher={Springer}
}

@article{andersson2013alternating_cleanintersection,
  title={Alternating projections on nontangential manifolds},
  author={Andersson, Fredrik and Carlsson, Marcus},
  journal={Constructive Approximation},
  volume={38},
  number={3},
  pages={489--525},
  year={2013},
  publisher={Springer}
}

@article{bauschke2013restrictedtheory,
  title={Restricted normal cones and the method of alternating projections: theory},
  author={Bauschke, Heinz H and Luke, D Russell and Phan, Hung M and Wang, Xianfu},
  journal={Set-Valued and Variational Analysis},
  volume={21},
  number={3},
  pages={431--473},
  year={2013},
  publisher={Springer}
}

@article{bauschke2013restrictedapplication,
  title={Restricted normal cones and the method of alternating projections: applications},
  author={Bauschke, Heinz H and Luke, D Russell and Phan, Hung M and Wang, Xianfu},
  journal={Set-Valued and Variational Analysis},
  volume={21},
  number={3},
  pages={475--501},
  year={2013},
  publisher={Springer}
}

@article{drusvyatskiy2015intrinsictransversality,
  title={Transversality and alternating projections for nonconvex sets},
  author={Drusvyatskiy, Dmitriy and Ioffe, Alexander D and Lewis, Adrian S},
  journal={Foundations of Computational Mathematics},
  volume={15},
  number={6},
  pages={1637--1651},
  year={2015},
  publisher={Springer}
}

@article{noll2016separableMAP,
  title={On local convergence of the method of alternating projections},
  author={Noll, Dominikus and Rondepierre, Aude},
  journal={Foundations of Computational Mathematics},
  volume={16},
  number={2},
  pages={425--455},
  year={2016},
  publisher={Springer}
}

@article{ioffe2017transversality,
  title={Transversality in variational analysis},
  author={Ioffe, Alexander D},
  journal={Journal of Optimization Theory and Applications},
  volume={174},
  number={2},
  pages={343--366},
  year={2017},
  publisher={Springer}
}

@article{drusvyatskiy2019inexactapproximate,
  title={Local linear convergence for inexact alternating projections on nonconvex sets},
  author={Drusvyatskiy, Dmitriy and Lewis, Adrian S},
  journal={Vietnam Journal of Mathematics},
  volume={47},
  number={3},
  pages={669--681},
  year={2019},
  publisher={Springer}
}

@article{budzinskiy2025quasioptimal,
  title={Quasioptimal alternating projections and their use in low-rank approximation of matrices and tensors},
  author={Budzinskiy, Stanislav},
  journal={Numerische Mathematik},
  volume={157},
  number={5},
  pages={1491--1535},
  year={2025},
  publisher={Springer}
}

@article{rosen1960gradientI,
  title={The gradient projection method for nonlinear programming. {P}art {I}. {L}inear constraints},
  author={Rosen, Jo Bo},
  journal={Journal of the Society for Industrial and Applied Mathematics},
  volume={8},
  number={1},
  pages={181--217},
  year={1960},
  publisher={SIAM}
}

@article{rosen1961gradientII,
  title={The gradient projection method for nonlinear programming. {P}art {II}. {N}onlinear constraints},
  author={Rosen, J Ben},
  journal={Journal of the Society for Industrial and Applied Mathematics},
  volume={9},
  number={4},
  pages={514--532},
  year={1961},
  publisher={SIAM}
}

@article{frost1972CGP,
  title={An algorithm for linearly constrained adaptive array processing},
  author={Frost, Otis Lamont},
  journal={Proceedings of the IEEE},
  volume={60},
  number={8},
  year={1972}
}

@article{nocedal1985projectedHessian,
  title={Projected {H}essian updating algorithms for nonlinearly constrained optimization},
  author={Nocedal, Jorge and Overton, Michael L},
  journal={SIAM Journal on Numerical Analysis},
  volume={22},
  number={5},
  pages={821--850},
  year={1985},
  publisher={SIAM}
}

@article{yuan2001nullspace,
  title={A null space algorithm for constrained optimization},
  author={Yuan, Ya-xiang},
  journal={Advances in Scientific Computing, Science Press, Beijing},
  pages={210--218},
  year={2001}
}

@inproceedings{ablin2022fastlanding,
  title={Fast and accurate optimization on the orthogonal manifold without retraction},
  author={Ablin, Pierre and Peyr{\'e}, Gabriel},
  booktitle={International Conference on Artificial Intelligence and Statistics},
  pages={5636--5657},
  year={2022},
  organization={PMLR}
}

@article{gao2022landingStiefel,
  title={Optimization flows landing on the {S}tiefel manifold},
  author={Gao, Bin and Vary, Simon and Ablin, Pierre and Absil, P.-A.},
  journal={IFAC-PapersOnLine},
  volume={55},
  number={30},
  pages={25--30},
  year={2022},
  publisher={Elsevier}
}

@inproceedings{schechtman2023ODCGM,
  title={Orthogonal directions constrained gradient method: from non-linear equality constraints to {S}tiefel manifold},
  author={Schechtman, Sholom and Tiapkin, Daniil and Muehlebach, Michael and Moulines, Eric},
  booktitle={The Thirty Sixth Annual Conference on Learning Theory},
  pages={1228--1258},
  year={2023},
  organization={PMLR}
}

@article{ablin2024infeasible,
  title={Infeasible deterministic, stochastic, and variance-reduction algorithms for optimization under orthogonality constraints},
  author={Ablin, Pierre and Vary, Simon and Gao, Bin and Absil, P.-A.},
  journal={Journal of Machine Learning Research},
  volume={25},
  number={389},
  pages={1--38},
  year={2024}
}

@InProceedings{vary24landinggeneralStiefel,
  title = 	 {Optimization without retraction on the random generalized {S}tiefel manifold},
  author =       {Vary, Simon and Ablin, Pierre and Gao, Bin and Absil, P.-A.},
  booktitle = 	 {Proceedings of the 41st International Conference on Machine Learning},
  pages = 	 {49226--49248},
  year = 	 {2024},
  volume = 	 {235},
  publisher =    {PMLR},
}

@inproceedings{song2025distributedlanding,
title={Distributed retraction-free and communication-efficient optimization on the {S}tiefel manifold},
author={Yilong Song and Peijin Li and Bin Gao and Kun Yuan},
booktitle={International Conference on Machine Learning},
year={2025}
}

@article{goyens2026riemannian,
      title={The {R}iemannian Landing Method: from projected gradient flows to {SQP}}, 
      author={Florentin Goyens and Florian Feppon},
      year={2026},
      journal={arXiv preprint arXiv:2603.24309},
}

@article{rockafellar1976ALMconvex,
  title={Augmented {L}agrangians and applications of the proximal point algorithm in convex programming},
  author={Rockafellar, R Tyrrell},
  journal={Mathematics of operations research},
  volume={1},
  number={2},
  pages={97--116},
  year={1976},
  publisher={INFORMS}
}

@article{andreani2008ALM_lowerlevel,
  title={On augmented {L}agrangian methods with general lower-level constraints},
  author={Andreani, Roberto and Birgin, Ernesto G and Mart{\'\i}nez, Jos{\'e} Mario and Schuverdt, Mar{\'\i}a Laura},
  journal={SIAM Journal on Optimization},
  volume={18},
  number={4},
  pages={1286--1309},
  year={2008},
  publisher={SIAM}
}

@article{xiao2025exactconvex,
  title={An exact penalty approach for equality constrained optimization over a convex set},
  author={Xiao, Nachuan and Tang, Tianyun and Wang, Shiwei and Toh, Kim-Chuan},
  journal={arXiv preprint arXiv:2505.02495},
  year={2025}
}

@article{yang2014optimalityRie_nonlinear,
  title={Optimality conditions for the nonlinear programming problems on {R}iemannian manifolds},
  author={Yang, Wei Hong and Zhang, Lei-Hong and Song, Ruyi},
  journal={Pacific Journal of Optimization},
  volume={10},
  number={2},
  pages={415--434},
  year={2014}
}

@article{bergmann2019intrinsicKKT,
  title={Intrinsic formulation of {KKT} conditions and constraint qualifications on smooth manifolds},
  author={Bergmann, Ronny and Herzog, Roland},
  journal={SIAM Journal on Optimization},
  volume={29},
  number={4},
  pages={2423--2444},
  year={2019},
  publisher={SIAM}
}

@article{liu2020simple,
  title={Simple algorithms for optimization on {R}iemannian manifolds with constraints},
  author={Liu, Changshuo and Boumal, Nicolas},
  journal={Applied Mathematics \& Optimization},
  volume={82},
  number={3},
  pages={949--981},
  year={2020},
  publisher={Springer}
}

@article{schiela2021Riesqp,
  title={An {SQP} method for equality constrained optimization on {H}ilbert manifolds},
  author={Schiela, Anton and Ortiz, Julian},
  journal={SIAM Journal on Optimization},
  volume={31},
  number={3},
  pages={2255--2284},
  year={2021},
  publisher={SIAM}
}

@article{obara2022RieSQP,
  title={Sequential quadratic optimization for nonlinear optimization problems on {R}iemannian manifolds},
  author={Obara, Mitsuaki and Okuno, Takayuki and Takeda, Akiko},
  journal={SIAM Journal on Optimization},
  volume={32},
  number={2},
  pages={822--853},
  year={2022},
  publisher={SIAM}
}

@article{zhou2023semismoothNewton,
  title={A semismooth {N}ewton based augmented {L}agrangian method for nonsmooth optimization on matrix manifolds},
  author={Zhou, Yuhao and Bao, Chenglong and Ding, Chao and Zhu, Jun},
  journal={Mathematical Programming},
  volume={201},
  number={1},
  pages={1--61},
  year={2023},
  publisher={Springer}
}

@article{jia2023augmented,
  title={An augmented {L}agrangian method for optimization problems with structured geometric constraints},
  author={Jia, Xiaoxi and Kanzow, Christian and Mehlitz, Patrick and Wachsmuth, Gerd},
  journal={Mathematical Programming},
  volume={199},
  number={1},
  pages={1365--1415},
  year={2023},
  publisher={Springer}
}

@article{lai2024riemannianint,
  title={{R}iemannian interior point methods for constrained optimization on manifolds},
  author={Lai, Zhijian and Yoshise, Akiko},
  journal={Journal of Optimization Theory and Applications},
  volume={201},
  number={1},
  pages={433--469},
  year={2024},
  publisher={Springer}
}

@article{andreani2024CQ_RieALM,
  title={Constraint qualifications and strong global convergence properties of an augmented {L}agrangian method on {R}iemannian manifolds},
  author={Andreani, Roberto and Couto, Kelvin R and Ferreira, Orizon P and Haeser, Gabriel},
  journal={SIAM Journal on Optimization},
  volume={34},
  number={2},
  pages={1799--1825},
  year={2024},
  publisher={SIAM}
}

@article{andreani2026globalRALM,
  title={Global convergence of an augmented {L}agrangian method for nonlinear programming via {R}iemannian optimization},
  author={Andreani, Roberto and Couto, Kelvin R and Ferreira, Orizon P and Haeser, Gabriel and Prudente, Leandro F},
  journal={SIAM Journal on Optimization},
  volume={36},
  number={1},
  pages={466--501},
  year={2026},
  publisher={SIAM}
}

@article{beck2016minimizationC_B,
  title={On the minimization over sparse symmetric sets: projections, optimality conditions, and algorithms},
  author={Beck, Amir and Hallak, Nadav},
  journal={Mathematics of Operations Research},
  volume={41},
  number={1},
  pages={196--223},
  year={2016},
  publisher={INFORMS}
}

@unpublished{chenhuang2026sparse_stiefel,
  author = {Chen, Shixiang and Huang, Wen},
  title  = {Manifold identification and second-order algorithms for $\ell_1$-regularization on the {S}tiefel manifold},
  note   = {Talk at ICCOPT 2025, Los Angeles, CA. \url{https://iccopt2025usc.sched.com/event/21ZZR}},
  year   = {2025},
}

@article{chen2020ManPG,
  title={Proximal gradient method for nonsmooth optimization over the {S}tiefel manifold},
  author={Chen, Shixiang and Ma, Shiqian and Man-Cho So, Anthony and Zhang, Tong},
  journal={SIAM Journal on Optimization},
  volume={30},
  number={1},
  pages={210--239},
  year={2020}
}

@article{ozolins2013compressed,
  title={Compressed modes for variational problems in mathematics and physics},
  author={Ozoli{\c{n}}{\v{s}}, Vidvuds and Lai, Rongjie and Caflisch, Russel and Osher, Stanley},
  journal={Proceedings of the National Academy of Sciences},
  volume={110},
  number={46},
  pages={18368--18373},
  year={2013}
}

@article{xiong2026langding2,
  title={A second-order method landing on the {S}tiefel manifold via {N}ewton--{S}chulz iteration},
  author={Xinhui Xiong and Bin Gao and P.-A. Absil},
  journal={arXiv preprint arXiv:2605.02838},
  year={2026}
}

@article{chu2005lowrankoblique,
  title={On the low-rank approximation of data on the unit sphere},
  author={Chu, M and Del Buono, Nicoletta and Lopez, Luciano and Politi, Tiziano},
  journal={SIAM Journal on Matrix Analysis and Applications},
  volume={27},
  number={1},
  pages={46--60},
  year={2005},
  publisher={SIAM}
}

@article{liu2024SLPG,
  title={A penalty-free infeasible approach for a class of nonsmooth optimization problems over the {S}tiefel manifold},
  author={Liu, Xin and Xiao, Nachuan and Yuan, Ya-xiang},
  journal={Journal of Scientific Computing},
  volume={99},
  number={2},
  pages={30},
  year={2024},
  publisher={Springer}
}

@article{miller1995wordnet,
  title={Word{N}et: a lexical database for {E}nglish},
  author={Miller, George A},
  journal={Communications of the ACM},
  volume={38},
  number={11},
  pages={39--41},
  year={1995},
  publisher={ACM New York, NY, USA}
}

@article{olikier2026tangentconeproof,
  title={The tangent cone to the real determinantal variety: various expressions and a proof},
  author={Olikier, Guillaume and Mlinari{\'c}, Petar and Absil, P-A and Uschmajew, Andr{\'e}},
  journal={Set-Valued and Variational Analysis},
  volume={34},
  number={2},
  pages={8},
  year={2026},
  publisher={Springer}
}

@article{vandereycken2013lowrankcompletion,
  title={Low-rank matrix completion by {R}iemannian optimization},
  author={Vandereycken, Bart},
  journal={SIAM Journal on Optimization},
  volume={23},
  number={2},
  pages={1214--1236},
  year={2013},
  publisher={SIAM}
}

@article{xiao2025quadraticMAP,
  title={A Quadratically Convergent Alternating Projection Method for Nonconvex Sets},
  author={Xiao, Nachuan and Wang, Shiwei and Tang, Tianyun and Toh, Kim-Chuan},
  journal={arXiv preprint arXiv:2511.22916},
  year={2025}
}

@article{chen2026retractionsalternatingprojections,
      title={Retractions by Alternating Projections}, 
      author={Shixiang Chen and Yixiao He and Wen Huang},
      year={2026},
      journal={arXiv preprint arXiv:2605.17384},

}

@article{si2026unifiedlanding,
  title={A Unified Landing Framework for Equality-Constrained Optimization},
  author={Si, Wutao and Malick, Jerome},
  year={2026},
  url = {https://hal.science/hal-05487561},
}

\end{document}